\newtheorem{thm}{Theorem}[section]
\newtheorem{prop}[thm]{Proposition}
\newtheorem{lem}[thm]{Lemma}
\newtheorem{cor}[thm]{Corollary}
\newtheorem{assumption}[thm]{Assumption}
\theoremstyle{definition}
\newtheorem{dfn}[thm]{Definition}
\theoremstyle{remark}
\newtheorem{rem}[thm]{Remark}
\newtheorem*{acknowledgement}{Acknowledgements}
\newcommand{\R}{\mathbb{R}}
\newcommand{\Z}{\mathbb{Z}}
\newcommand{\A}{\mathcal{A}}
\newcommand{\B}{\mathcal{B}}
\newcommand{\F}{\mathcal{F}}
\newcommand{\Fr}{\mathrm{Fr}}
\newcommand{\Geod}{\mathrm{Geod}}
\title{Magnitude homology of geodesic space}
\author{Kiyonori Gomi}
\address{
Department of Mathematics, 
Institute of Science Tokyo, 
2-12-1 Ookayama, Meguro-ku, Tokyo, 152-8551, Japan.}
\email{kgomi@math.titech.ac.jp}
\subjclass[2010]{55N35, 51F99, 18G40}
\keywords{Magnitude homology, metric space, geodesic metric space.}
\begin{document}

\begin{abstract}
This paper studies the magnitude homology groups of geodesic metric spaces. We start with a description of the second magnitude homology of a general metric space in terms of the zeroth homology groups of certain simplicial complexes. Then, on a geodesic metric space, we interpret the description by means of geodesics. The third magnitude homology of a geodesic metric space also admits a description in terms of a simplicial complex. Under an assumption on a metric space, the simplicial description allows us to introduce an invariant of third magnitude homology classes as an intersection number. Finally, we provide a complete description of all the magnitude homology groups of a geodesic metric space which fulfils a certain non-branching assumption.
\end{abstract}

\maketitle

\tableofcontents


\section{Introduction}
\label{sec:introduction}

The \textit{magnitude} \cite{L} of a metric space $(X, d)$ is an invariant which ``effectively'' counts the number of points on $X$. As its categorification, the \textit{magnitude homology} is introduced \cite{H-W,L-S}. Since its introduction, a number of works have appeared recently. For example, a problem raised in \cite{H-W,L-S} is the existence of torsions in magnitude homology groups. This is solved affirmatively in \cite{K-Y}. Another problem is to introduce a blurred version of magnitude homology \cite{L-S}, which is solved in \cite{Ott}.

Because of its origin, the magnitude homology is defined in a rather categorical or algebraic way, and its geometric meaning is much less clear. This contrasts with the singular homology of a topological space, which appeals more or less to our geometric intuition. To uncover geometric meanings of the magnitude homology is the motivation of the present paper. 

\medskip

Regarding low degree magnitude homology groups, some results are known \cite{H-W,L-S}. To review these results, we denote by $H^\ell_n(X)$ the $n$th magnitude homology group of a metric space $(X, d)$ with its ``characteristic length'' $\ell$. In general, a magnitude homology class in $H^\ell_n(X)$ is represented by a linear combination of ``chains'' $\langle x_0, \cdots, x_n \rangle$ of points $x_0 \neq x_1 \neq \cdots \neq x_n$ on $X$ constrained as
$$
d(x_0, x_1) + d(x_1, x_2) + \cdots + d(x_{n-1}, x_n) = \ell
$$
by the (non-negative) real number $\ell$. Then it is known \cite{H-W,L-S} that
\begin{align*}
H^\ell_0(X) &=
\left\{
\begin{array}{ll}
\bigoplus_{x \in X} \Z \langle x \rangle, & (\ell = 0) \\
0, & (\ell > 0)
\end{array}
\right.
&
H^\ell_1(X) &=
\left\{
\begin{array}{ll}
0, & (\ell = 0) \\
\bigoplus_{\stackrel{d(a, b) = \ell}{a < \nexists x < b}}
\Z \langle a, b \rangle, & (\ell > 0)
\end{array}
\right.
\end{align*}
where $a < \nexists x < b$ means that there is no point $x$ such that $a \neq x \neq b$ and $d(a, x) + d(x, b) = d(a, b)$. Therefore the zeroth magnitude homology is generated by the points on $X$. Notice that the existence of a point $x$ such that $a \neq x \neq b$ and $d(a, x) + d(x, b) = d(a, b)$ for any $a, b \in X$ is the characterization that $X$ is \textit{Menger convex}. \textit{Geodesic metric spaces} are examples of Menger convex metric spaces. Then the description above tells us that a Menger convex space $X$ has the trivial first homology $H^\ell_1(X) = 0$ for any $\ell$. For the second magnitude homology, a description in terms of chains is also given in \cite{L-S} when $X$ is a \textit{geodetic} metric space satisfying some additional assumptions.

\medskip

Since the zeroth and first magnitude homology groups of a metric space are understood as above, the present paper begins with a study of the second and third magnitude homology groups, in the case that the metric space is in particular geodesic. Before the statements of our results, we notice two basic facts. The first fact is that the magnitude homology $H^\ell_n(X)$ of a metric space $(X, d)$ admits a direct sum decomposition
$$
H^\ell_n(X) = \bigoplus_{a, b \in X} H^\ell_n(a, b),
$$
where $H^\ell_n(a, b)$ is made of chains $\langle x_0, \cdots, x_n \rangle$ whose ``endpoints'' are $x_0 = a$ and $x_n = b$. Thus, we will mainly concerned with the direct summand $H^\ell_n(a, b)$ in this paper. The second fact is that the direct summand $H^\ell_n(a, b)$ is trivial whenever $\ell < d(a, b)$, and $H^\ell_n(a, b)$ with $\ell = d(a, b)$ and $n \ge 2$ admits a description
$$
H^\ell_n(a, b) \cong \overline{H}_{n-2}(A(a, b))
$$
in terms of the reduced simplicial homology of a simplicial complex $A(a, b)$. The vanishing in the case of $\ell < d(a, b)$ follows directly from an estimate of the length of chains, whereas the description in the case of $\ell = d(a, b)$ is a result in \cite{K-Y} (see \S\S\ref{subsec:simplicial_complex_in_magnitude_homology} for detail). 

Now, our result about the second magnitude homology (Theorem \ref{thm:main_description}) is:

\begin{thm}
Let $(X, d)$ be a metric space, and $\ell$ a real number. For any $a, b \in X$, we have
\begin{align*}
H^\ell_2(a, b)
&\cong
\left\{
\begin{array}{ll}
0, & (\ell < d(a, b)) \\
\overline{H}_0(A(a, b)), & (\ell = d(a, b)) \\
H_0(B^\ell(a, b)), & (\ell > d(a, b))
\end{array}
\right.
\end{align*}
where $H_0(B^\ell(a, b))$ is the zeroth homology group of a simplicial complex $B^\ell(a, b)$. 
\end{thm}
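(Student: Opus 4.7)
My plan is to treat each of the three cases in turn, with most of the effort devoted to the last.

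The case $\ell < d(a,b)$ is immediate from the triangle inequality: every chain $\langle a = x_0, x_1, \ldots, x_n = b\rangle$ has total length at least $d(a,b) > \ell$, so the magnitude chain group $C^\ell_n(a,b)$ vanishes for every $n$, and in particular $H^\ell_2(a,b) = 0$. The case $\ell = d(a,b)$ is the specialization to $n = 2$ of the isomorphism $H^\ell_n(a,b) \cong \overline{H}_{n-2}(A(a,b))$ recalled from \cite{K-Y} above the theorem.

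For the main case $\ell > d(a,b)$, I would begin by noting that $C^\ell_1(a,b) = 0$: the only candidate generator is $\langle a, b\rangle$, but its length $d(a,b)$ differs from $\ell$. Consequently $d_2$ is the zero map and
$$H^\ell_2(a,b) \;=\; C^\ell_2(a,b) \big/ \operatorname{image}(d_3).$$
Next I would unwind $d_3$ on a generator $\langle a, x, y, b\rangle$ of $C^\ell_3(a,b)$. The extreme face maps $\partial_0, \partial_3$ vanish automatically, because removing an endpoint cannot preserve the length constraint; so $d_3 = -\partial_1 + \partial_2$, and the face $\partial_1$ (respectively $\partial_2$) is nonzero exactly when $d(a,x) + d(x,y) = d(a,y)$ (respectively $d(x,y) + d(y,b) = d(x,b)$). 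Each generator of $C^\ell_3(a,b)$ therefore contributes to $\operatorname{image}(d_3)$ one of: the zero relation; a single ``killing'' term $\pm\langle a,y,b\rangle$ or $\pm\langle a,x,b\rangle$; or an ``identification'' $\langle a,x,b\rangle - \langle a,y,b\rangle$.

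The final step is to construct a simplicial complex $B^\ell(a,b)$ whose $0$th simplicial homology exactly presents this quotient. The natural candidate has vertices corresponding to the generators $\langle a,x,b\rangle$ of $C^\ell_2(a,b)$, possibly augmented by an auxiliary basepoint, with a $1$-simplex $\{x,y\}$ for each identification relation and a $1$-simplex from the relevant vertex to the basepoint for each killing relation; the $0$th simplicial chain group then matches $C^\ell_2(a,b)$ (up to the basepoint), the boundary image matches $\operatorname{image}(d_3)$, and $H_0(B^\ell(a,b))$ is read off from the connected components. The main obstacle I anticipate is defining $B^\ell(a,b)$ with enough care that this match is a genuine isomorphism: one must handle degenerate $3$-chains $\langle a,x,y,b\rangle$ in which $a = y$ or $x = b$ (allowed since only consecutive entries must be distinct), and ensure that no spurious $\Z$-summand from a basepoint component pollutes $H_0$.
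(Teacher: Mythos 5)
Your proposal is correct in substance but reaches the result by a more elementary route than the paper. The first two cases coincide with the paper's treatment. For $\ell > d(a,b)$, the paper runs the smoothness spectral sequence: it shows $E^\infty_{1,1}(a,b)=0$ (immediate, since $E^1_{1,1}(a,b)=0$ when $\ell\neq d(a,b)$) and then computes $E^\infty_{0,2}(a,b)=E^3_{0,2}(a,b)$ as the quotient of $E^1_{0,2}(a,b)=C^\ell_2(a,b)$ by $d^1(E^1_{1,2}(a,b))$ (your ``killing'' relations, coming from $3$-chains with exactly one smooth point) together with $\partial$ of the $4$-cuts in $E^1_{2,1}(a,b)$ (your ``identification'' relations, entering via the $d^2$ differential). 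Your observation that $C^\ell_1(a,b)=0$ forces $H^\ell_2(a,b)=C^\ell_2(a,b)/\mathrm{im}\,\partial$ collapses all of this into one step and arrives at exactly the same presentation, so nothing is lost by bypassing the spectral sequence here; what the spectral sequence buys the paper is reusability in degree $3$ and higher, where $C^\ell_1(a,b)=0$ no longer trivializes the cycle condition. One small correction: $\partial_0$ and $\partial_3$ do not ``vanish because of the length constraint'' --- the magnitude differential is $\sum_{i=1}^{n-1}(-1)^i\partial_i$ by definition, so the outer faces are never present.

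Two loose ends in your final step deserve mention. First, the degenerate chains you worry about are a non-issue: if $a=y$ in $\langle a,x,y,b\rangle$, then $a<x<y$ would force $x=a$, so $\partial_1$ automatically vanishes there (and likewise for $x=b$ and $\partial_2$); whenever a face survives it is a genuine proper generator of $C^\ell_2(a,b)$. Second, your basepoint graph does not literally present the quotient --- a killing edge contributes $v-{*}$ rather than $v$ --- so you must additionally kill the class of the basepoint component; equivalently, $C^\ell_2(a,b)/\mathrm{im}\,\partial$ is free abelian on the components of your graph \emph{not} containing the basepoint, i.e.\ it is $H_0$ of the subcomplex obtained by deleting the basepoint's component. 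This is easily completed and still yields the zeroth homology of a simplicial complex (hence torsion-freeness, the point of the theorem), but note that your complex is not the paper's $B^\ell(a,b)$: the paper excises the basepoint component by hand, removing from the vertex set the killed vertices and those identified in one step with a killed vertex, and keeping only identification edges between the survivors. Your component-deletion automatically absorbs arbitrarily long identification chains terminating in a killed vertex, which is the cleaner way to guarantee the match with $C^\ell_2(a,b)/\mathrm{im}\,\partial$.
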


We remark that there are other ways to represent the magnitude homology as developed in \cite{A-Iv,A-Iz,H-W,K-Y,T-Y} for example.

In general, the zeroth (reduced and unreduced) homology group of a simplicial complex is torsion free. Thus, we can conclude that the second magnitude homology $H^\ell_2(X)$ is torsion free for any metric space $X$ and any real number $\ell$. As is mentioned, the existence of torsions in magnitude homology is shown in \cite{K-Y}. More precisely, the argument in \cite{K-Y} tells that the magnitude homology group $H^\ell_n(X)$ can contain a torsion subgroup if $n \ge 3$. In view of this fact, a problem raised by Hepworth \footnote{\url|https://golem.ph.utexas.edu/category/2018/04/torsion_graph_magnitude_homolo.html|} is whether the second magnitude homology admits torsions or not. Our result solves this problem.

The part $H^\ell_2(a, b) \cong H_0(B^\ell(a, b))$ appearing in the case of $\ell > d(a, b)$ is trivial if $(X, d)$ is a geodesic space. Under this assumption, $\overline{H}_0(A(a, b))$ admits an interpretation by means of certain equivalence classes of geodesics on $X$. Concretely, let $\Geod(a, b)$ denote the set of geodesics joining $a$ to $b$. We can then generate an equivalence relation on $\Geod(a, b)$ by a relation $f \sim g$ for $f, g \in \Geod(a, b)$ which exists only when $f$ and $g$ intersect at a point other than $a$ or $b$. We write $\Z[\pi_0(\Geod(a, b))]$ for the free abelian group generated by the equivalence classes of geodesics in $\Geod(a, b)$. This group has a surjective homomorphism $\epsilon : \Z[\pi_0(\Geod(a, b))] \to \Z$ induced from $\epsilon(f) = 1$ for $f \in \Geod(a, b)$. Using these notations, we can describe the direct summand $H^\ell_2(a, b)$ of the magnitude homology of a geodesic space $(X, d)$ as follows (Corollary \ref{cor:second_homology_on_geodesic_space}):
$$
H^\ell_2(a, b) \cong
\left\{
\begin{array}{ll}
\mathrm{Ker}[\epsilon : \Z[\pi_0(\Geod(a, b))] \to \Z], & (d(a, b) = \ell) \\
0. & (d(a, b) \neq \ell)
\end{array}
\right.
$$

\medskip

For the third magnitude homology, we have the following result (Theorem \ref{thm:third_homology_on_geodesic_space}).

\begin{thm} 
Let $(X, d)$ be a geodesic space, and $\ell$ a real number. For any $a, b \in X$, there is an isomorphism 
$$
H^\ell_3(a, b) \cong
\left\{
\begin{array}{ll}
H_1(A(a, b)), & (\ell = d(a, b)) \\
0. & (\ell \neq d(a, b))
\end{array}
\right.
$$
\end{thm}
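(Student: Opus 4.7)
The plan is to split the argument into the three cases $\ell < d(a,b)$, $\ell = d(a,b)$, and $\ell > d(a,b)$, with only the last requiring new work.

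The case $\ell < d(a,b)$ is immediate from the length estimate already noted in the introduction: any magnitude chain of total length $\ell$ with endpoints $a,b$ satisfies $\ell \ge d(a,b)$, so the chain group itself is zero and $H^\ell_3(a,b)=0$. For the case $\ell = d(a,b)$, I would just invoke the Kaneta--Yoshinaga isomorphism $H^\ell_n(a,b) \cong \overline{H}_{n-2}(A(a,b))$ with $n=3$; since reduced and unreduced first homology of a simplicial complex coincide, the right hand side is $H_1(A(a,b))$ as claimed. Notice that neither of these two cases uses the geodesic hypothesis; it enters only in the third case.

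The heart of the proof is the vanishing $H^\ell_3(a,b)=0$ when $\ell > d(a,b)$. Here I plan to use the simplicial-complex description of $H^\ell_3(a,b)$ that the paper establishes just before this theorem (the analogue for degree three of the group $H_0(B^\ell(a,b))$ appearing in Theorem 1.1), and to show directly that the relevant complex is $H_1$-acyclic under the geodesic assumption. The strategy is: given a 3-cycle represented by a combination of tuples $\langle a, x_1, x_2, b\rangle$ with $d(a,x_1)+d(x_1,x_2)+d(x_2,b)=\ell$, choose for each summand a geodesic in $\Geod(a,b)$ and distribute the slack $\ell - d(a,b) > 0$ along this geodesic to produce a 4-chain (equivalently, a 2-simplex in the auxiliary complex). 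Its boundary should contain the original tuple together with correction tuples lying either in endpoints where $\ell=d$ (already controlled by the Kaneta--Yoshinaga part) or in shorter-summand positions where Theorem 1.1 for $H^\ell_2$ applies. After summing over the cycle, the cycle condition forces the correction terms to cancel, identifying the cycle with a boundary.

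The main obstacle is the bookkeeping around the face maps. In magnitude homology only those face maps survive that preserve the total length $\ell$, i.e. only deletions at triangle-inequality-saturated positions. The geodesic insertion must therefore be tailored so that the desired tuple appears as one surviving face of the constructed 4-chain, while the other surviving faces either cancel pairwise across the cycle or fall into the pieces $H^\ell_2(a,x_i)$, $H^\ell_2(x_i,b)$ already known to be torsion-free and described by $\pi_0$-type data. The delicate combinatorial point is to choose the geodesics coherently enough that this matching happens globally; this is where the assumption that $X$ is a geodesic space (rather than merely Menger convex) is genuinely used, since it guarantees the existence of a continuous family of intermediate points along which the slack can be slid.
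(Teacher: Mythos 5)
Your treatment of the cases $\ell<d(a,b)$ and $\ell=d(a,b)$ matches the paper and is fine. The problem is the case $\ell>d(a,b)$, where the proposal has several genuine gaps. First, you invoke ``the simplicial-complex description of $H^\ell_3(a,b)$\ldots the analogue for degree three of $H_0(B^\ell(a,b))$''; no such degree-three analogue is established in the paper (Theorem \ref{thm:main_description} is only for $H^\ell_2$), and you do not construct one, so the object your argument is supposed to act on does not exist yet. Second, the mechanism ``choose for each summand a geodesic in $\Geod(a,b)$ and distribute the slack $\ell-d(a,b)$ along it'' does not make sense: when $\ell>d(a,b)$ the intermediate points $x_1,x_2$ of a generator $\langle a,x_1,x_2,b\rangle$ need not lie on, or anywhere near, any geodesic joining $a$ to $b$, so there is no canonical way to ``slide'' the chain onto such a geodesic. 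Third, the claim that correction terms ``fall into the pieces $H^\ell_2(a,x_i)$, $H^\ell_2(x_i,b)$'' misreads the boundary operator: every face of a $4$-chain $\langle a,\ast,\ast,\ast,b\rangle$ is again a chain with endpoints $a$ and $b$, so the summands $H^\ell_2(a,x_i)$ never enter; the decomposition $H^\ell_n(X)=\bigoplus_{a,b}H^\ell_n(a,b)$ is preserved by $\partial$.

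What the paper actually does for $\ell>d(a,b)$ is organize the chains by the smoothness spectral sequence, reducing the statement to three separate vanishings: $E^2_{0,3}=0$ (from \cite{G} via Lemma \ref{lem:a_strong_Menger_convexity}), $E^2_{1,2}=0$ (Proposition \ref{prop:vanishing_E2_12}), and $E^2_{2,1}=0$ (Proposition \ref{prop:vanishing_E2_21_ell_general}). Each vanishing rests on a concrete point-selection lemma: for $E^2_{2,1}$ one appends a point $z$ near $b$ on a geodesic from $x$ through $y$ to $b$ so that $d^1\langle a,x,y,z,b\rangle=-\langle a,x,y,b\rangle$ exactly (Lemmas \ref{lem:first_preliminary_to_kill_4_cut} and \ref{lem:second_preliminary_to_kill_4_cut}); for $E^2_{1,2}$ one must choose $\overline{x}$ near $a$ and $\overline{y}$ near $b$ satisfying finitely many strict non-betweenness conditions simultaneously, which requires the continuity/compactness argument of Lemma \ref{lem:preliminary_to_the_vanishing_E2_12}. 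Your proposal gestures at ``tailoring the insertion so the other faces cancel'' but supplies neither the case decomposition nor these constructions, and the cancellation is precisely the part that does not happen for free. As written, the $\ell>d(a,b)$ case is a plan rather than a proof, and the plan as stated would not go through without being rebuilt along the lines above.
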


Using the above description, we can prove the vanishing of the third magnitude homology, if $X$ is a \textit{uniquely geodesic space} (Corollary \ref{cor:third_homology_on_geodesic_space_simple_version}), or more generally, if $X$ is a geodesic space satisfying a certain non-branching assumption (Corollary \ref{cor:third_homology_on_geodesic_space_general_version}). This assumption (Assumption \ref{assumption:non_branching}) is that: for any $a, b \in X$, if two geodesics $f$ and $g$ joining $a$ to $b$ share a point other than $a$ or $b$, then they coincide: $f = g$. This assumption is equivalent to that each equivalence class in $\pi_0(\Geod(a, b))$ is represented by a unique geodesic. It should be noticed that a connected and complete Riemannian manifold satisfies the assumption. Thus, for example, we get the vanishing of the third magnitude homology of the circle with geodesic metric, as shown in \cite{G}.

Using the description also, we introduce an invariant of third magnitude homology classes in $H^\ell_3(a, b)$ with $\ell = d(a, b)$. The key to the formulation of this invariant is that an element in $H^\ell_3(a, b) \cong H_1(A(a, b))$ with $\ell = d(a, b)$, being a first homology class of a simplicial complex, can be represented by a closed edge path of the simplicial complex $A(a, b)$. Under an assumption (Assumption \ref{assumption:for_intersection_number}), which seems somewhat strong, we can associate to the edge path a geometric path in $X$. Then, counting the intersection number of this geometric path and a geodesic $f$ joining $a$ to $b$, we define an invariant of the third magnitude homology classes, which gives rise to a homomorphism (Corollary \ref{cor:intersection_number})
$$
\nu_f : H^\ell_3(a, b) \to \Z.
$$
By an example, this homomorphism turns out to be non-trivial in general. For the time being, the dependence of $\nu_f$ on $f$ and relationship between $\nu_f$ and the \textit{magnitude cohomology} \cite{H} are unknown. These issues deserve future works.

\medskip

In the results about the second and third magnitude homology groups so far, the descriptions become simple when a geodesic space satisfies the non-branching assumption (Assumption \ref{assumption:non_branching}). Remarkably, this assumption leads to a complete description of all the magnitude homology groups (Theorem \ref{thm:complete_description_under_non_branching}).

\begin{thm} \label{thm:main}
Let $(X, d)$ be a geodesic space satisfying Assumption \ref{assumption:non_branching}, namely, for any $a, b \in X$, if two geodesics $f$ joining $a$ to $b$ share a point other than $a$ or $b$, then $f = g$. Then the $n$th magnitude homology group $H^\ell_n(X)$ of length $\ell > 0$ admits the following description.
\begin{enumerate}
\item[(a)]
If $n$ is odd, then $H^\ell_n(X) = 0$ for any $\ell$.

\item[(b)]
If $n = 2q$ is even, then
$$
H^\ell_n(X)
\cong
\bigoplus_{\ell_1, \ldots, \ell_q}
\bigoplus_{\varphi_0, \ldots, \varphi_q}
\bigoplus_{f_1, \ldots, f_q}
\Z (f_1, \ldots, f_q).
$$
In the above, the direct sum is taken over:
\begin{itemize}
\item
positive real numbers $\ell_1, \ldots, \ell_q$ such that $\ell = \ell_1 + \cdots + \ell_q$, 

\item
points $\varphi_0, \ldots, \varphi_q \in X$ such that $d(\varphi_{i-1}, \varphi_i) = \ell_i$ for $ = 1, \ldots, q$,

\item
geodesics $f_i \in \Geod(\varphi_{i-1}, \varphi_i)$ such that $f_i \neq \overline{f}_i$ for $i = 1, \ldots, q$, 
\end{itemize}
where $\overline{f}_i \in \Geod(\varphi_{i-1}, \varphi_i)$ are geodesics chosen arbitrarily as references. We mean by $\Z(f_1, \ldots, f_q)$ the free abelian group of rank $1$ generated by the formal symbol $(f_1, \ldots, f_q)$.
\end{enumerate}
\end{thm}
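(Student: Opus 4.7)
The plan is to combine a base-case computation at $\ell = d(a,b)$ with a K\"unneth-type decomposition of the magnitude chain complex along a corner-and-segment structure of chains that emerges under the non-branching assumption.

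\emph{Base case.} Under the non-branching assumption, each vertex of $A(a,b)$ lies on a unique geodesic of $\Geod(a,b)$, any finite set of interior points of a common geodesic spans a simplex of $A(a,b)$, and no interior point is shared by distinct geodesics. Hence $A(a,b)$ is a disjoint union of contractible infinite simplices indexed by $\Geod(a,b)$, giving $\overline{H}_0(A(a,b)) \cong \Z^{|\Geod(a,b)|-1}$ and $H_k(A(a,b)) = 0$ for $k \geq 1$. Combined with the isomorphism $H^\ell_n(a,b) \cong \overline{H}_{n-2}(A(a,b))$ valid for $\ell = d(a,b)$ recalled in the introduction, this already handles the single-segment case $q = 1$ of the theorem: $H^\ell_2(a,b) \cong \Z^{|\Geod(a,b)|-1}$ with a basis in bijection with $\{f \in \Geod(a,b) : f \neq \overline{f}\}$, and $H^\ell_n(a,b) = 0$ for $n \geq 3$ whenever $\ell = d(a,b)$.

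\emph{Multi-segment case.} For $\ell > d(a,b)$, I would stratify chains by their corner data. Given a chain $\langle x_0,\ldots,x_n \rangle$, record the interior indices $c_1 < \cdots < c_{q-1}$ where smoothness fails, set $\varphi_j := x_{c_j}$ (with $\varphi_0 = x_0$, $\varphi_q = x_n$), and let $\ell_j := d(\varphi_{j-1}, \varphi_j)$. Under non-branching, each maximal smooth run between two consecutive corners determines a unique geodesic $f_j \in \Geod(\varphi_{j-1}, \varphi_j)$ containing it, producing a canonical decorated skeleton $(\varphi_\bullet; f_\bullet)$ attached to each chain. Because the magnitude boundary removes only smooth interior points of some $f_j$, this decoration should be preserved, so $MC^\ell_*(X)$ splits (or at least filters) into pieces indexed by decorated skeletons. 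Each piece is a tensor product, over the $q$ segments, of per-segment subcomplexes, each of which is (after a degree shift) the augmented simplicial chain complex of the contractible infinite simplex on the interior points of $f_j$. Assembling these via a K\"unneth-type argument that keeps track of how the ``augmentations'' at the corners $\varphi_j$ are glued, the surviving contribution is a single copy of $\Z$ in total degree $n = 2q$ exactly when each $f_j \neq \overline{f}_j$, and is zero otherwise. Summing over decorated skeletons yields the formula of part (b), and the parity-two shift contributed by each segment forces the odd-degree vanishing of part (a).

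\emph{Main obstacle.} The hardest step is the canonical decoration, i.e.\ showing that a maximal smooth run of a chain genuinely lies on a single geodesic under the non-branching assumption. From smoothness of the consecutive triples $(x_{i-1}, x_i, x_{i+1})$ and $(x_i, x_{i+1}, x_{i+2})$ one obtains a geodesic $g_1 \in \Geod(x_{i-1}, x_{i+1})$ through $x_i$ and a geodesic $g_2 \in \Geod(x_i, x_{i+2})$ through $x_{i+1}$; the restrictions $g_1|_{[x_i, x_{i+1}]}$ and $g_2|_{[x_i, x_{i+1}]}$ both lie in $\Geod(x_i, x_{i+1})$, and non-branching must be invoked to force them to coincide, so that $g_1$ and $g_2$ glue into a geodesic containing all four points. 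Once this ``geodesic coherence'' is in place, the splitting of the chain complex and the K\"unneth identification of each summand's homology should follow by standard homological-algebra bookkeeping, and the reduction to the theorem's indexing set by formal subtraction of the reference geodesics $\overline{f}_j$.
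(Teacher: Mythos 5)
Your coarse strategy --- stratify chains by their singular (``corner'') points, analyze segment by segment, and find the homology concentrated in degree $2q$ with basis indexed by tuples of non-reference geodesics --- is indeed the shape of the paper's argument, which runs the smoothness spectral sequence and computes $E^2_{p,q} = \bigoplus_{\varphi} H^{\ell_1,\ldots,\ell_q}_{p+q}(\varphi)$. Your base case at $\ell = d(a,b)$ is correct: under Assumption \ref{assumption:non_branching} the complex $A(a,b)$ is a disjoint union of acyclic full simplices indexed by $\Geod(a,b)$. But three steps of the multi-segment case have genuine gaps. First, the magnitude boundary does \emph{not} preserve the corner set: deleting a smooth point $x_i$ can render $x_{i-1}$ or $x_{i+1}$ singular, so $C^\ell_*(X)$ does not split by decorated skeletons; it only carries the smoothness filtration, and you then owe an argument that the associated spectral sequence degenerates (in the paper this is exactly the vanishing $E^2_{p,q}=0$ for $p\neq q$ of Propositions \ref{prop:E2_p1} and \ref{prop:determine_all_direct_summand}). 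Second, the ``canonical decoration'' is not well defined: a maximal smooth run between $\varphi_{j-1}$ and $\varphi_j$ lies on a single geodesic only when the length of that subchain equals $d(\varphi_{j-1},\varphi_j)$ (note your $\ell_j$ should be the subchain length, not the distance); consecutive smoothness does not exclude $4$-cuts inside a segment. Your proposed repair also fails: the two restrictions in $\Geod(x_i,x_{i+1})$ share only the endpoints $x_i$ and $x_{i+1}$, and Assumption \ref{assumption:non_branching} forces two geodesics to coincide only when they share an \emph{interior} point (antipodal points on the circle show that uniqueness of geodesics genuinely fails under the assumption). The non-distance-realizing segments must instead be shown to contribute acyclically by explicit homotopies, as in Lemmas \ref{lem:replacable_by_chains_starting_with_non_4_cut} and \ref{lem:local_homotopy_for_E2_p1}.

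Third, and most seriously, the per-frame complex is \emph{not} a tensor product of per-segment complexes, so the K\"unneth step does not apply. The requirement that $\varphi_j$ be singular in $\langle \ldots, x^{j}_{\mathrm{last}}, \varphi_j, x^{j+1}_{\mathrm{first}}, \ldots \rangle$ is the joint condition $x^{j}_{\mathrm{last}} \not< \varphi_j \not< x^{j+1}_{\mathrm{first}}$, which couples the last point of segment $j$ to the first point of segment $j+1$; hence the set of admissible chains is not a product over segments. This coupling is precisely what the paper's admissible sets (Definition \ref{dfn:magnitude_cycle}) and the consistent choices of insertion points (Lemma \ref{lem:consistent_choice_of_base_points}, Lemmas \ref{lem:replacement_of_reference_points} and \ref{lem:induction_main_step}) are built to control: instead of K\"unneth, one standardizes a cycle segment by segment with local chain homotopies whose inserted points must satisfy cross-segment compatibility, and one proves linear independence of the resulting classes $\Gamma(f_1,\overline{f}_1;\cdots;f_q,\overline{f}_q)$ by an explicit retraction $\varrho$. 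To salvage your K\"unneth picture you would have to show that a genuine product subcomplex includes quasi-isomorphically into $C^{\ell_1,\ldots,\ell_q}_{*}(\varphi)$, which amounts to the same work the paper does by hand.
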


We can derive a number of results from the theorem. An immediate consequence is that the magnitude homology of a geodesic space is torsion free, provided the non-branching assumption (Corollary \ref{cor:complete_description_under_non_branching:torsion}). Another immediate consequence is the vanishing of positive degree magnitude homology groups in the case where the metric space $X$ in question is a uniquely geodesic space (Corollary \ref{cor:vanishing_on_uniquely_geodesic_space}). This covers the vanishing in the case where $X$ is:

\begin{itemize}
\item
a convex subset of $\R^N$ with Euclidean distance \cite{K-Y}, 

\item
a connected and complete Riemannian manifold which is uniquely geodesic as a metric space \cite{Jub}, and 

\item
a geodesic $\mathrm{CAT}(\kappa)$-space with $\kappa \le 0$ \cite{A}.

\end{itemize}
If possible geodesics are known, then we can explicitly describe the magnitude homology. For example, we can determine the magnitude homology of the circle $S^1$ of radius $r > 0$ with geodesic metric as follows: 
$$
H^\ell_n(S^1)
\cong
\left\{
\begin{array}{ll}
\Z[S^1], & (\mbox{$n \ge 0$ even and $\ell = \pi r n/2$}) \\
0, & (\mbox{otherwise})
\end{array}
\right.
$$
where $\Z[S^1]$ is the abelian group generated by points on $S^1$. Note that, so far, $H^\ell_n(S^1)$ is determined only when $n$ or $\ell$ is small \cite{G,K-Y,L-S}. Based on the universal coefficient theorem \cite{H} and the explicit basis, we can also determine the magnitude cohomology ring of $S^1$, the detail of which is left to interested readers.

\medskip

Thanks to Theorem \ref{thm:main} (Theorem \ref{thm:complete_description_under_non_branching}), the magnitude homology of a geodesic space is well understood under the non-branching assumption (Assumption \ref{assumption:non_branching}). However, as mentioned above, the (third) magnitude homology of a geodesic space is generally non-trivial. It is not clear to what extent the description under the assumption can be generalized.

\medskip

To conclude the introduction, we point out that our results have the tendency that some assumptions about the uniqueness of geodesics simplify the magnitude homology groups. The extreme case is the vanishing of positive degree magnitude homology of a uniquely geodesic space. Therefore a possible geometric intuition about the magnitude homology of a geodesic space would be: ``The more geodesics are unique, the more magnitude homology is trivial''.

\bigskip

The present paper is organized as follows: In \S\ref{sec:preliminary}, we review the definition of the magnitude homology, and the fact that $H^\ell_n(a, b)$ with $\ell = d(a, b)$ is identified with the simplicial homology $\overline{H}_{n-2}(A(a, b))$. We also review the \textit{smoothness spectral sequence} \cite{G}, which is applied to the proof of our main descriptions (Theorem \ref{thm:main_description}, Theorem \ref{thm:third_homology_on_geodesic_space} and Theorem \ref{thm:complete_description_under_non_branching}). After that elementary facts about geodesic spaces are reviewed. Then, in \S\ref{sec:second_magnitude_homology}, we study the second magnitude homology of general metric spaces and geodesic metric spaces. \S\ref{sec:third_magnitude_homology} is devoted to the third magnitude homology of a geodesic metric space and its invariant. Finally, in \S\ref{sec:higher_magnitude_homology}, we provide the complete description of the magnitude homology under Assumption \ref{assumption:non_branching}.

\bigskip

\begin{acknowledgement}
I would like to thank Yasuhide Numata, Masahiko Yoshinaga, Shin-ichi Ohta, Yasuhiko Asao and Beno\^{i}t Jubin for helpful discussions and correspondence. I would also like to thank the anonymous reviewer for the comments that lead to improvements. This work is supported by JSPS Grant-in-Aid for Scientific Research on Innovative Areas "Discrete Geometric Analysis for Materials Design": Grant Number JP17H06462.
\end{acknowledgement}


\section{Preliminary}
\label{sec:preliminary}


\subsection{Magnitude homology}

We review here the definition of the magnitude homology. The basic references are \cite{H-W,L-S}, although we use some different notations.

\medskip

Let $X$ be a set. For a non-negative integer $n$, an \textit{$n$-chain} $\langle x_0, \cdots, x_n \rangle$ is defined to be a sequence of points $x_0, \cdots, x_n$ on $X$. An $n$-chain $\langle x_0, \cdots, x_n \rangle$ is said to be \textit{proper} if the adjacent points in the sequence are pairwise distinct: $x_0 \neq x_1 \neq \cdots \neq x_n$. We write $\widehat{P}_n(X)$ for the set of $n$-chains and $P_n(X) \subset \widehat{P}_n(X)$ that of proper $n$-chains. 

Suppose that $X$ is equipped with a distance $d : X \times X \to \R$, so that $(X, d)$ is a metric space. We then define the \textit{length} of an $n$-chain $\langle x_0, \cdots, x_n \rangle \in \widehat{P}_n(X)$ by
$$
\ell(\langle x_0, \cdots, x_n \rangle) 
= d(x_0, x_1) + \cdots + d(x_{n-1}, x_n).
$$
We denote by $\widehat{P}^\ell_n(X) \subset \widehat{P}_n(X)$ the set of $n$-chains of length $\ell$, and by $P^\ell_n(X) \subset P_n(X)$ that of proper $n$-chains of length $\ell$. 

For two (distinct) points $x, z \in X$ given, we say that \textit{$y \in X$ is (strictly) between $x$ and $z$}, and write $x < y < z$, if $x \neq y \neq z$ and $d(x, y) + d(y, z) = d(x, z)$. The notation $x \not< y \not< z$ means that $y$ is not between $x$ and $z$.

For a proper $2$-chain $\langle x_0, x_1, x_2 \rangle$, we have $\ell(\langle x_0, x_1, x_2 \rangle) = d(x_0, x_1)$ if and only if $x_0 < x_1 < x_2$. However, for a proper $3$-chain $\langle x_0, x_1, x_2, x_3 \rangle$, the two relations $x_0 < x_1 < x_2$ and $x_1 < x_2 < x_3$ (which we often write $x_0 < x_1 < x_2 < x_3$) do not generally imply $\ell(\langle x_0, x_1, x_2, x_3 \rangle) = d(x_0, x_3)$. A $3$-chain $\langle x_0, x_1, x_2, x_3 \rangle$ such that $x_0 < x_1 < x_2 < x_3$ and $\ell(\langle x_0, x_1, x_2, x_3 \rangle) > d(x_0, x_3)$ is called a \textit{$4$-cut} \cite{K-Y,L-S}. It is  straight to see that a proper $3$-chain $\langle x_0, x_1, x_2, x_3 \rangle \in P_3(X)$ such that $x_0 < x_1 < x_2 < x_3$ is a $4$-cut if and only if $x_0 \not< x_2 \not< x_3$ (or equivalently $x_0 \not< x_1 \not< x_3$). 

\medskip

For a metric space $(X, d)$ and a non-negative integer $n$, we let $C^\ell_n(X)$ be the free abelian group generated by proper $n$-chains of length $\ell$. For a negative integer $n$, we set $C^\ell_n(X) = 0$. For $n \ge 1$, we define a homomorphism $\partial : C^\ell_n(X) \to C^\ell_{n-1}(X)$ to be the linear extension of 
$$
\partial \langle x_0, \cdots, x_n \rangle
= \sum_{\stackrel{1 \le i \le n-1}{x_{i-1} < x_i < x_{i+1}}}
(-1)^i \langle x_0, \cdots, x_{i-1}, x_{i+1}, \cdots, x_n \rangle.
$$
If we introduce the notation
$$
\partial_i \langle x_0, \cdots, x_n \rangle
= 
\left\{
\begin{array}{ll}
\langle x_0, \cdots, x_{i-1}, x_{i+1}, \cdots, x_n \rangle, &
(x_{i-1} < x_i < x_{i+1}) \\
0, & (x_{i-1} \not< x_i \not< x_{i+1})
\end{array}
\right.
$$
then we can write $\partial = \sum_{i = 1}^{n-1}(-1)^i\partial_i$. For $n \le 0$, we define $\partial : C^\ell_n(X) \to C^\ell_{n-1}(X)$ to be trivial. It turns out that $(C^\ell_*(X), \partial)$ is a chain complex, called the magnitude chain complex. Its homology $H^\ell_*(X)$ is the magnitude homology of $(X, d)$.

\bigskip

The magnitude complex and the magnitude homology admit intrinsic direct sum decompositions: Given two points $a, b \in X$, we define $P^\ell_n(a, b) \subset P^\ell_n(X)$ to be the set of proper $n$-chains $\langle x_0, \cdots, x_n \rangle$ of length $\ell$ such that $x_0 = a$ and $x_n = b$. We let $C^\ell_n(a, b)$ be the free abelian group generated by chains in $P^\ell_n(a, b)$. We clearly have the direct sum decomposition
$$
C^\ell_n(X) = \bigoplus_{a, b \in X} C^\ell_n(a, b).
$$
It turns out that the boundary map $\partial$ on the magnitude complex $C^\ell_*(X)$ restricts to one on $C^\ell_*(a, b)$. In this way, we get a subcomplex $(C^\ell_*(a, b), \partial)$ for each $a, b \in X$, and the direct sum decomposition of the magnitude complex. As a result, we also have the direct sum decomposition of the magnitude homology
$$
H^\ell_n(X) = \bigoplus_{a, b \in X} H^\ell_n(a, b).
$$

\begin{rem} \label{rem:notation_in_noncommutative_polynomial}
One can regard $C^\ell_n(X)$ as a subgroup of the non-commutative polynomial ring generated by points in $X$. In this ring, an element such as $x(y_1 + y_2)z = xy_1z + xy_2z$ makes sense. In accordance with this fact, we will use later notations such as
$$
\langle x, y_1 + y_2, z \rangle = 
\langle x, y_1, z \rangle + \langle x, y_2, z \rangle.
$$

\end{rem}


\subsection{A simplicial complex in magnitude homology}
\label{subsec:simplicial_complex_in_magnitude_homology}

Let $(X, d)$ be a metric space, and $\ell$ a real number. For two points $a, b \in X$, let $H^\ell_n(a, b)$ be the direct summand of the magnitude homology. The length of any chain of the form $\langle a, x_1, \cdots, x_{n-1}, b \rangle$ has the lower bound $\ell(\langle a, x_1, \cdots, x_{n-1}, b \rangle) \ge d(a, b)$. From this, it clearly follows that $H^\ell_n(a, b) = 0$ if $\ell < d(a, b)$. In the case that $\ell = d(a, b)$, the direct summand $H^\ell_n(a, b)$ is described as the homology of a simplicial complex \cite{K-Y}. We review here this simplicial complex. (As a basic reference of simplicial complexes, we refer to \cite{Spa}.)

The key to the description is a property of \textit{geodesically simple chains} \cite{K-Y}: Let $\langle a, x_1, \cdots, x_{n-1}, b \rangle \in C^\ell_n(a, b)$ be a proper $n$-chain of length $\ell$. Under the assumption $\ell = d(a, b)$, we have $a < x_1 < \cdots < x_{n-1} < b$. Furthermore, we have $a < x_{i_1} < \cdots < x_{i_k} < b$ and hence $\langle a, x_{i_1}, \cdots, x_{i_{k-1}}, b \rangle \in C^\ell_k(a, b)$ for any $i_1, \ldots, i_{k-1}$ such that $1 \le i_1 < \cdots < i_{k-1} \le n-1$.

\begin{dfn}
Let $(X, d)$ be a metric space, and $a, b \in X$ distinct two points. We define a simplicial complex $A(a, b)$ as follows:
\begin{itemize}
\item[(a)]
A vertex ($0$-simplex) of $A(a, b)$ is a point $x \in X$ such that $a < x < b$, or equivalently $a \neq x \neq b$ and $d(a, x) + d(x, b) = d(a, b)$.

\item[(b)]
A $(p-1)$-simplex $\{ x_1, \cdots, x_p \}$ of $A(a, b)$ consists of vertices $x_1, \cdots, x_p$ such that $a \neq x_1 \neq \cdots \neq x_p \neq b$ and
$$
d(a, x_1) + d(x_1, x_2) + \cdots + d(x_{p-1}, x_p) + d(x_p, b) = d(a, b).
$$
(Hence $a < x_1 < \cdots < x_p < b$ holds true.)

\item[(c)]
For a $(p-1)$-simplex $\{ x_1, \cdots, x_p \}$ of $A(a, b)$, we give a total order $x_1 \prec \cdots \prec x_p$ of vertices $x_1, \cdots, x_p$ when they satisfy $a < x_1 < \cdots < x_p < b$. By the equivalence class of the total order, the $(p-1)$-simplex is oriented. 
\end{itemize}
\end{dfn}

Let $(C_*(A(a, b)), \partial)$ be the oriented chain complex of the simplicial complex $A(a, b)$. This chain complex has the standard augmentation $\epsilon : C_0(A(a, b)) \to \Z$. We write $\overline{H}_n(A(a, b))$ for the $n$th reduced homology of $A(a, b)$, which agrees with the unreduced homology $H_n(A(a, b))$ if $n \neq 0$. In the case that $a = b$, we put $\overline{H}_n(A(a, b)) = 0$ for all $n$ as a convention. The following description is essentially due to \cite{K-Y} (Theorem 4.4).

\begin{prop} \label{prop:length_is_the_lower_bound}
Let $(X, d)$ be a metric space, $\ell$ a real number, and $a, b \in X$ two points such that $d(a, b) = \ell$. For any $n \ge 2$, we have an isomorphism of groups
$$
H^\ell_n(a, b) \cong \overline{H}_{n-2}(A(a, b)).
$$
\end{prop}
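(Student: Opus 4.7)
The plan is to exhibit an explicit isomorphism of chain complexes between the magnitude complex $C^\ell_*(a,b)$ in the diagonal case $\ell = d(a,b)$ and the \emph{augmented} simplicial chain complex of $A(a,b)$, shifted up by two. First I would use the observation already stated in the excerpt: if $\ell = d(a,b)$, then for any proper chain $\langle a, x_1, \ldots, x_{n-1}, b\rangle \in P^\ell_n(a,b)$ the triangle inequalities
\[
d(a,b) \le d(a, x_i) + d(x_i, b) \le \sum_{j=1}^{n} d(x_{j-1}, x_j) = d(a,b)
\]
must all be equalities, and similarly for every subsum. Hence all betweenness relations $x_{i-1} < x_i < x_{i+1}$ hold automatically, and $\{x_1,\ldots,x_{n-1}\}$ is an $(n-2)$-simplex of $A(a,b)$ with a canonical total order; conversely, every $(n-2)$-simplex of $A(a,b)$ comes from a unique proper $n$-chain. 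This yields a bijection on basis elements.

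Next I would define, for $n \ge 2$, the abelian group isomorphism
\[
\phi_n : C^\ell_n(a,b) \xrightarrow{\ \cong\ } C_{n-2}(A(a,b)),\qquad
\langle a, x_1, \ldots, x_{n-1}, b\rangle \ \longmapsto\ [x_1, \ldots, x_{n-1}],
\]
and for $n = 1$ set $\phi_1 : C^\ell_1(a,b) = \mathbb{Z}\langle a, b\rangle \xrightarrow{\cong} \mathbb{Z}$ by $\langle a,b\rangle \mapsto 1$, thinking of $\mathbb{Z}$ as the augmentation term at the bottom of the augmented chain complex. The next step is to check that $\phi_*$ intertwines the differentials up to a uniform sign. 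Because every betweenness test in the definition of $\partial_i$ is automatically satisfied by the first step, the magnitude differential reduces, for $n \ge 3$, to the alternating sum
\[
\partial\langle a, x_1, \ldots, x_{n-1}, b\rangle = \sum_{i=1}^{n-1}(-1)^i \langle a, x_1, \ldots, \widehat{x_i}, \ldots, x_{n-1}, b\rangle,
\]
which corresponds under $\phi$ to $-\partial^{\mathrm{simp}}[x_1,\ldots,x_{n-1}]$. For $n = 2$ the computation $\partial\langle a, x_1, b\rangle = -\langle a, b\rangle$ identifies $\phi_1 \circ \partial$ with $-\epsilon \circ \phi_2$, so the augmentation $\epsilon : C_0(A(a,b)) \to \mathbb{Z}$ is built into the magnitude boundary automatically.

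Combining these observations, $\phi_*$ is an isomorphism from $C^\ell_*(a,b)$ in degrees $\ge 1$ onto the augmented simplicial chain complex $\cdots \to C_1(A(a,b)) \to C_0(A(a,b)) \xrightarrow{\epsilon} \mathbb{Z} \to 0$, shifted up by two. Passing to homology in degrees $n \ge 2$ therefore gives $H^\ell_n(a,b) \cong \overline{H}_{n-2}(A(a,b))$, which is the claim. The degenerate case $a = b$ (i.e.\ $\ell = 0$) is automatic since properness forces $C^0_n(a,a) = 0$ for $n \ge 1$, matching the convention $\overline{H}_*(A(a,a)) = 0$.

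I expect the only delicate part to be the bookkeeping at $n = 2$: one must recognise that $\ker \partial_2 / \operatorname{im}\partial_3$ on the magnitude side corresponds precisely to $\ker \epsilon / \operatorname{im}\partial^{\mathrm{simp}}_1$ on the simplicial side, rather than to the unreduced $H_0(A(a,b))$. Once the chain-level identification is set up, the passage for $n \ge 3$ is formal, and the whole argument is essentially the one already appearing in \cite{K-Y}.
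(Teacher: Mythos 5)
Your proposal is correct and follows essentially the same route as the paper: the paper's proof likewise defines $\phi_1(\langle a,b\rangle)=1$ and $\phi_{p+1}(\langle a,x_1,\cdots,x_p,b\rangle)=[x_1\cdots x_p]$, checks that these give an isomorphism between the complex $C^\ell_{*+2}(a,b)$ augmented by $\partial:C^\ell_2(a,b)\to C^\ell_1(a,b)$ and the augmented simplicial chain complex of $A(a,b)$, and passes to homology. Your extra care with the automatic betweenness relations, the uniform sign, and the degenerate case $a=b$ matches the paper's preliminary remarks on geodesically simple chains and its stated convention, so nothing is missing.
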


\begin{proof}
If $d(a, b) = \ell = 0$, then we readily see $H^\ell_n(a, a) = 0$ for $n > 0$. Hence the proposition holds true by our convention $\overline{H}_{n-2}(A(a, b)) = 0$. Under the assumption $d(a, b) = \ell > 0$, we have $C^\ell_n(a, b) = 0$ for $n \le 0$, and
\begin{align*}
C^\ell_1(a, b) &= \Z \langle a, b \rangle, &
C^\ell_n(a, b)
&= \bigoplus_{\stackrel{a \neq x_1 \neq \cdots \neq x_{n-1} \neq b}
{\ell(\langle a, x_1, \cdots, x_{n-1}, b \rangle) = d(a, b)}}
\Z \langle a, x_1, \cdots, x_{n-1}, b \rangle
\end{align*}
for $n \ge 1$. By definition, the group $C_{p-1}(A(a, b))$ in the oriented chain complex of $A(a, b)$ is generated by the oriented $(p-1)$-simplices $[x_1 \cdots x_p]$ in $A(a, b)$. The augmentation is $\epsilon([x]) = 1$ and the boundary of $[x_1 \cdots x_p]$ is
$$
\partial [ x_1 \cdots x_p] 
= \sum_{i = 1}^p(-1)^i[x_1 \cdots x_{i-1}x_{i+1} \cdots x_p].
$$ 
There are homomorphisms $\phi_p$ which make the following diagram commutative:
$$
\begin{CD}
C^\ell_1(a, b) @<{\partial}<< C^\ell_2(a, b) @<{\partial}<< C^\ell_3(a, b)
@<{\partial}<< \cdots \\
@V{\phi_1}VV @V{\phi_2}VV @V{\phi_3}VV @. \\
\Z @<{\epsilon}<< C_0(A(a, b)) @<{\partial}<< C_1(A(a, b)) 
@<{\partial}<< \cdots.
\end{CD}
$$
Actually, if we put $\phi_1(\langle a, b \rangle) = 1$ and $\phi_{p+1}(\langle a, x_1, \cdots, x_p, b \rangle) = [x_1 \cdots x_p]$, then $\partial \phi_{p+1} = \phi_p \partial$. Since each $\phi_p$ is bijective, they together form an isomorphism between the augmented chain complex $C_*(A(a, b)) \to \Z$ and the chain complex $C^\ell_{*+2}(a, b)$ augmented by $\partial : C^\ell_2(a, b) \to C^\ell_1(a, b)$. Consequently, $\phi_p$ induces the isomorphism of homology groups in the proposition. 
\end{proof}

As is pointed out \cite{K-Y}, we have the following criterion for the vanishing of the reduced simplicial homology of $A(a, b)$.

\begin{lem} \label{lem:vanishing_of_homology_in_totally_ordered_case}
If the vertices in $A(a, b)$ are totally ordered, then $\overline{H}_n(A(a, b)) = 0$ for all $n \in \Z$.
\end{lem}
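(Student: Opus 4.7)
The plan is to show that the hypothesis forces $A(a,b)$ to be the \emph{full} (abstract) simplex on its vertex set, whose reduced homology vanishes by a standard cone argument.

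First I would establish the following combinatorial observation. Let $V$ denote the set of vertices of $A(a,b)$. The hypothesis says that for any two vertices $x,y \in V$, either $x \prec y$ or $y \prec x$ in the simplicial order; in particular, every pair $\{x,y\} \subset V$ is a $1$-simplex of $A(a,b)$. The claim to prove is that every finite subset $\{x_1,\dots,x_p\} \subset V$ is a $(p-1)$-simplex of $A(a,b)$, so that $A(a,b)$ is the full simplex $\Delta^V$. After relabelling by the total order, we may assume $x_1 \prec x_2 \prec \cdots \prec x_p$, and we proceed by induction on $p$. The base cases $p=1,2$ hold by the definition of a vertex and a $1$-simplex. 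For the inductive step, combining the $1$-simplex condition
$$
d(a,x_1) + d(x_1,x_2) + d(x_2,b) = d(a,b)
$$
with the vertex condition $d(a,x_2) + d(x_2,b) = d(a,b)$ yields $d(a,x_1) + d(x_1,x_2) = d(a,x_2)$. Adding this to the inductive assumption $d(a,x_2) + d(x_2,x_3) + \cdots + d(x_p,b) = d(a,b)$ for $\{x_2,\dots,x_p\}$ gives the required equation for $\{x_1,\dots,x_p\}$. Hence $A(a,b) = \Delta^V$.

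Next I would argue that the full (possibly infinite) simplex $\Delta^V$ has vanishing reduced homology. If $V = \varnothing$, then $A(a,b)$ is empty and we use the convention that its reduced homology is zero. If $V \neq \varnothing$, pick any $v_0 \in V$ and define a chain homotopy $h \colon C_{p-1}(A(a,b)) \to C_p(A(a,b))$ by coning with $v_0$: on an oriented simplex $[x_1 \cdots x_p]$ with $v_0 \notin \{x_1,\dots,x_p\}$, set $h([x_1 \cdots x_p]) = [v_0 x_1 \cdots x_p]$ (inserting $v_0$ according to the total order, with the induced sign), and set $h = 0$ on simplices already containing $v_0$. A routine check (using that every finite subset of $V$ containing $v_0$ is still a simplex, since $A(a,b) = \Delta^V$) gives the chain-homotopy identity
$$
\partial h + h \partial = \mathrm{id}
$$
on the augmented chain complex $\widetilde{C}_*(A(a,b))$, so $\overline{H}_n(A(a,b)) = 0$ for all $n$.

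The heart of the argument is the first step, turning the total-ordering hypothesis into a full simplex; the contractibility statement is then standard. The main subtlety to watch is keeping the signs straight in the cone homotopy when $V$ has no minimum element for the given total order, but this is only a bookkeeping issue: choosing $v_0$ arbitrarily and inserting it in its proper position relative to the total order resolves it.
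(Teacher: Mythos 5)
Your proof is correct and follows essentially the same route as the paper: the paper also contracts $A(a,b)$ by the cone chain homotopy that inserts a fixed vertex $\overline{x}$ into each oriented simplex at its position in the total order, obtaining $(h\partial+\partial h)=-\mathrm{id}$ on the augmented complex. Your preliminary step identifying $A(a,b)$ with the full simplex on its vertex set is precisely the verification (left implicit in the paper) that this insertion again yields a simplex of $A(a,b)$, so it is a useful elaboration rather than a different argument.
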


\begin{proof}
The lemma follows from the fact that the simplicial complex $A(a, b)$, being the order complex of a totally ordered set, is a simplex.
\end{proof}


\subsection{Smoothness spectral sequence}
\label{subsec:spectral_sequence}

We here review the smoothness filtration of the magnitude complex and its associated spectral sequence \cite{G}.

\medskip

Let $(X, d)$ be a metric space, and $\ell$ a real number. For a proper $n$-chain $\gamma = \langle x_0, \cdots, x_n \rangle \in P^\ell_n(X)$ of length $\ell$, a point $x_i$, ($i = 0, \cdots, n$) in the sequence $x_0, \cdots, x_n$ is said to be \textit{smooth} if $x_{i-1} < x_i < x_{i+1}$, and \textit{singular} otherwise \cite{K-Y}. (In \cite{Jub}, \textit{straight} is used for smooth, and \textit{crooked} for singular.) We write $\sigma(\gamma)$ for the number of smooth points in a proper chain $\gamma$. We can then define $F_pP^\ell_n(X) \subset P^\ell_n(X)$ to be the subset consisting of proper $n$-chains $\gamma$ such that $\sigma(\gamma) \le p$, and $F_pC^\ell_n(X)$ to be the free abelian group generated by chains in $F_pP^\ell_n(X)$. We can see that the subgroup $F_pC^\ell_n(X) \subset C^\ell_n(X)$ forms a subcomplex $(F_pC^\ell_*(X), \partial)$ of $(C^\ell_*(X), \partial)$. We thus get an increasing filtration, called the smoothness filtration of the magnitude chain complex
$$
\cdots \subset 
F_{p-1}C^\ell_* \subset
F_pC^\ell_* \subset
F_{p+1}C^\ell_* \subset 
\cdots.
$$
The associated spectral sequence, which we call the smoothness spectral sequence, converges to a graded quotient of the magnitude homology
$$
E^r_{p, q} \Longrightarrow H_*^\ell(X).
$$

The $E_1$-term of the spectral sequence $E^1_{p, q} = F_pC^\ell_{p+q}(X)/F_{p-1}C^\ell_{p+q}(X)$ is identified with the free abelian group generated by proper $(p+q)$-chains of length $\ell$ which contain exactly $p$ smooth points. If $\gamma = \langle x_0, \cdots, x_{p+q} \rangle$ is such a chain, then the first differential $d^1 : E^1_{p, q} \to E^1_{p-1, q}$ acts as
$$
d^1\gamma = \sum_{i} (-1)^i
\langle x_0, \cdots, x_{i-1}, x_{i+1}, \cdots, x_{p+q} \rangle,
$$
where $i$ runs over $1, \cdots, p+q-1$ such that $x_{i-1} < x_i < x_{i+1}$ and 
$$
\sigma(\langle x_0, \cdots, x_{i-1}, x_{i+1}, \cdots, x_{p+q} \rangle) 
= p - 1. 
$$
Because any proper $n$-chain $\gamma = \langle x_0, \cdots, x_n \rangle \in P^\ell_n(X)$ with $n \ge 1$ contains two singular points $x_0$ and $x_n$, it is clear that $E^1_{p, 0} = 0$ for $p \ge 1$.

Using a notion of \textit{frame} \cite{K-Y}, we can introduce a direct sum decomposition of the $E^1$-term: Given a proper $(p+q)$-chain $\gamma = \langle x_0, \cdots, x_{p+q} \rangle \in P^\ell_{p+q}(X)$ with $\sigma(\gamma) = p$, the frame of $\gamma$ is defined as the $q$-chain
$$
\Fr(\gamma) = \langle x_0, x_{i_1}, \cdots, x_{i_{q-1}}, x_n \rangle 
\in \widehat{P}_q(X)
$$
given by removing the $p$ smooth points from $\gamma$. Notice that $\Fr(\gamma)$ may be an improper chain and its length may be less than $\ell$. For a $q$-chain $\varphi \in \widehat{P}_q(X)$, we write $P^\ell_{p+q}(\varphi) \subset P^\ell_{p+q}(X)$ for the subset of proper $(p+q)$-chains of length $\ell$ whose frames are $\varphi$. We also write $C^\ell_{p+q}(\varphi)$ for the free abelian group generated by $\gamma \in P^\ell_{p+q}(\varphi)$. By design, $(C^\ell_{*+q}(\varphi), d^1)$ is a subcomplex of $(E^1_{*, q}, d^1)$, and we have a direct sum decomposition
$$
E^1_{p, q} = \bigoplus_{\varphi \in \widehat{P}_q(X)}
C^\ell_{p+q}(\varphi).
$$
Given $\varphi = \langle \varphi_0, \cdots, \varphi_q \rangle \in \widehat{P}_q(X)$, a chain $\gamma \in P^\ell_{p+q}(\varphi)$ is expressed as
$$
\gamma =
\langle 
\varphi_0, x^1_1, \ldots, x^1_{n_1-1}, 
\varphi_1, \cdots,
\varphi_{q-1}, x^q_1, \ldots, x^q_{n_q-1},
\varphi_q
\rangle,
$$
where $n_1, \ldots, n_q$ are non-negative integers such that $n_1 + \cdots + n_q = p + q$, the points $x^j_i$ are smooth, and $\varphi_j$ are singular. (That $n_i = 0$ means there is no smooth point between $\varphi_{i-1}$ and $\varphi_i$.) Let $C^{\ell_1, \ldots, \ell_q}_{n_1, \ldots, n_q}(\varphi)$ be the subgroup in $C^\ell_{p+q}(\varphi)$ generated by such $(p+q)$-chains as above. The groups
$$
C^{\ell_1, \ldots, \ell_q}_{p+q}(\varphi)
= \bigoplus_{p+q = n_1 + \cdots + n_q}
C^{\ell_1, \ldots, \ell_q}_{n_1, \ldots, n_q}(\varphi)
$$
form a subcomplex $C^{\ell_1, \ldots, \ell_q}_{* + q}(\varphi)$ of $C^\ell_{* + q}(\varphi)$, so that we further get a direct sum decomposition
$$
E^1_{p, q}
= \bigoplus_{\varphi \in \widehat{P}_q(X)}
C^\ell_{p+q}(\varphi)
=
\bigoplus_{\varphi \in \widehat{P}_q(X)}
\bigoplus_{\stackrel{\ell_1, \cdots, \ell_q}{\ell_1 + \cdots + \ell_q = \ell}}
C^{\ell_1, \ldots, \ell_q}_{p+q}(\varphi).
$$

\medskip

Note that the spectral sequence preserves the direct sum decomposition of the magnitude homology $H^\ell_n(X)$ by the subgroups $H^\ell_n(a, b)$ with $a, b \in X$. We will write $E^r_{p, q}(a, b)$ for the $E^r$-term of the spectral sequence computing $H^\ell_n(a, b)$, which also provides a direct sum decomposition
$$
E^r_{p, q} = \bigoplus_{a, b \in X} E^r_{p, q}(a, b).
$$

\medskip

We now summarize the description of the magnitude homology $H^\ell_n(X)$ in terms of the spectral sequence for $n = 2, 3$. As mentioned above, it suffices to consider the spectral sequence computing the direct summand $H^\ell_2(a, b)$ for $a, b \in X$. Then $H^\ell_2(a, b)$ fits into the short exact sequence
$$
0 \to 
E^\infty_{0, 2}(a, b) \to 
H^\ell_2(a, b) \to 
E^\infty_{1,1}(a, b) \to 0.
$$
In addition, we have $E^\infty_{0, 2}(a, b) = E^3_{0, 2}(a, b)$ and $E^\infty_{1, 1}(a, b) = E^2_{1, 1}(a, b)$.  For $H^\ell_3(a, b)$, we have two exact sequences
\begin{gather*}
0 \to F_1H^\ell_3(a, b) \to H^\ell_3(a, b) \to E^\infty_{2, 1}(a, b) \to 0, \\
0 \to E^\infty_{0, 3}(a, b) \to F_1H^\ell_3(a, b) \to 
E^\infty_{1, 2}(a, b) \to 0,
\end{gather*}
where $F_1H^\ell_3(a, b) = \mathrm{Im}[H_3(F_1C_*^\ell(a, b), \partial) \to H^\ell_3(a, b)]$.


\subsection{Geodesic metric space}

In a metric space $(X, d)$, a \textit{geodesic} joining $x \in X$ to $y \in X$ is a map $f : [0, d(x, y)] \to X$ such that $f(0) = x$, $f(d(x, y)) = y$, and 
$$
\lvert t' - t \rvert = d(f(t'), f(t))
$$ 
for all $t, t' \in [0, d(x, y)]$. From this definition, the map $f$ is continuous. We say that $(X, d)$ is a \textit{geodesic (metric) space} if any pair of points $x, y \in X$ admits a geodesic joining $x$ to $y$. When such a geodesic is always unique, $(X, d)$ is called \textit{uniquely geodesic}. More details about geodesic spaces may be found in \cite{B-H,P} for example.

\medskip

We summarize here elementary properties of geodesic spaces to be used. 

\begin{lem} \label{lem:a_strong_Menger_convexity}
Let $(X, d)$ be a geodesic space. Then, for any distinct points $x, z \in X$ and any real number $\epsilon > 0$, there exists $y \in X$ such that $x < y < z$ and $d(x, y) < \epsilon$. 
\end{lem}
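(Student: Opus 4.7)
The plan is to use the defining geodesic directly. Since $(X,d)$ is geodesic and $x \neq z$, pick a geodesic $f : [0, \delta] \to X$ joining $x$ to $z$, where $\delta = d(x,z) > 0$. Then choose any parameter $t \in (0, \delta)$ with $t < \epsilon$, for instance $t = \tfrac{1}{2}\min\{\epsilon, \delta\}$, and set $y = f(t)$.

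Next I would verify the three required conditions in order. First, from the isometric condition $d(f(t'), f(t)) = |t' - t|$ one reads off
\begin{equation*}
d(x, y) = d(f(0), f(t)) = t, \qquad d(y, z) = d(f(t), f(\delta)) = \delta - t.
\end{equation*}
Since $0 < t < \delta$ both of these distances are strictly positive, so $y \neq x$ and $y \neq z$. Adding the two equalities gives $d(x,y) + d(y,z) = \delta = d(x,z)$, which together with $x \neq y \neq z$ is precisely the condition $x < y < z$ in the sense defined in the preliminary section. Finally, $d(x, y) = t < \epsilon$ by the choice of $t$.

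There is essentially no obstacle; the lemma is an immediate unpacking of the definition of a geodesic, whose parametrization by arc length lets one insert points arbitrarily close to $x$ on the segment from $x$ to $z$. The only mild subtlety is to make sure that the chosen $t$ is strictly positive and strictly less than $\delta$ so that $y$ is genuinely distinct from both endpoints; taking $t = \tfrac{1}{2}\min\{\epsilon, \delta\}$ handles this uniformly.
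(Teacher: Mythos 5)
Your proof is correct and follows essentially the same route as the paper: take a geodesic $f$ from $x$ to $z$, set $y = f(t)$ for a small $t \in (0, \delta)$ with $t < \epsilon$, and read off $x < y < z$ and $d(x,y) < \epsilon$ from the isometric parametrization. Your explicit choice $t = \tfrac{1}{2}\min\{\epsilon,\delta\}$ is a slightly more careful version of the paper's ``choose $t$ with $0 < t < \epsilon$''.
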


\begin{proof}
Suppose that two distinct points $x, z \in X$ and a real number $\epsilon > 0$ are given. Let $f : [0, d(x, z)] \to X$ be a geodesic joining $x$ to $z$. For any $t \in [0, d(x, z)]$, the point $y = f(t)$ satisfies
$$
d(x, y) + d(y, z) = 
\lvert 0 - t \rvert + \lvert t - d(x, z) \rvert
=
d(x, z).
$$
In particular, if $t \neq 0, d(x, z)$, then $x < y < z$. If we choose $t$ such that $0 < t < \epsilon$, then $d(x, y) = d(f(0), f(t)) = \lvert 0 - t \rvert = t < \epsilon$.
\end{proof}

It should be noticed that the conclusion in the lemma above is used in \cite{G} to ensure the vanishing of the smoothness spectral sequence $E^2_{0, q} = 0$ for $q \ge 2$. 

\smallskip

As seen in the proof above, if $f : [0, d(x, z)] \to X$ is a geodesic on a metric space $(X, d)$ joining $x$ to $z \neq x$, then we have $x < f(t) < z$ for any $t \in (0, d(x, z))$. A claim converse to this fact can be shown on a geodesic space:

\begin{lem} \label{lem:geodesic_from_x_y_z}
Let $(X, d)$ be a geodesic space. For any three points $x, y, z \in X$ such that $x < y < z$, there exists a geodesic $h : [0, d(x, z)] \to X$ which joins $x$ to $z$ and passes through $y$, namely, there is $t \in (0, d(x, z))$ such that $h(t) = y$.
\end{lem}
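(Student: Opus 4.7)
The plan is to construct $h$ by concatenating two shorter geodesics and then verify that the result really is a geodesic of total length $d(x,z)$. Since $(X,d)$ is geodesic, choose a geodesic $f:[0,d(x,y)]\to X$ joining $x$ to $y$ and a geodesic $g:[0,d(y,z)]\to X$ joining $y$ to $z$. Set $s=d(x,y)$ and define
$$
h(t)=\begin{cases} f(t), & t\in[0,s],\\ g(t-s), & t\in[s,s+d(y,z)].\end{cases}
$$
The relation $x<y<z$ gives $s+d(y,z)=d(x,z)$, so $h$ is a well-defined map on $[0,d(x,z)]$; the definitions agree at $t=s$ because $f(s)=y=g(0)$. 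Setting $t=s\in(0,d(x,z))$ (which is strictly interior because $x\neq y\neq z$), we see $h(t)=y$, so $h$ passes through $y$ as required.

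The only remaining issue is to check that $|t'-t|=d(h(t'),h(t))$ for all $t,t'\in[0,d(x,z)]$. When both parameters lie in $[0,s]$ this is immediate from $f$ being a geodesic, and similarly for both in $[s,d(x,z)]$ via $g$. The key case, and the only nontrivial one, is when $t\in[0,s]$ and $t'\in[s,d(x,z)]$. The upper bound $d(h(t),h(t'))\le t'-t$ follows from the triangle inequality through $y$:
$$
d(h(t),h(t'))\le d(h(t),y)+d(y,h(t'))=(s-t)+(t'-s)=t'-t.
$$

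The main obstacle, and the step that actually uses the hypothesis $d(x,y)+d(y,z)=d(x,z)$, is the matching lower bound. I would derive it from the triangle inequality applied to $x$, $h(t)$, $h(t')$, $z$:
$$
d(x,z)\le d(x,h(t))+d(h(t),h(t'))+d(h(t'),z)=t+d(h(t),h(t'))+(d(x,z)-t'),
$$
using $d(x,h(t))=t$ (since $h(t)=f(t)$) and $d(h(t'),z)=d(x,z)-t'$ (since $h(t')=g(t'-s)$ and $g$ is a geodesic). Rearranging gives $t'-t\le d(h(t),h(t'))$, and together with the previous inequality this yields equality. Hence $h$ is a geodesic from $x$ to $z$ passing through $y$ at the interior time $t=s$, completing the proof.
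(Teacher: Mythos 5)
Your proposal is correct and follows essentially the same route as the paper: concatenate a geodesic from $x$ to $y$ with one from $y$ to $z$, and verify the mixed case $t\le s\le t'$ via the triangle inequality through $y$ for the upper bound and the four-point triangle inequality through $x$ and $z$ for the lower bound. The paper merely packages that lower-bound computation as a preliminary general fact about points $x'$ with $d(x,x')+d(x',y)=d(x,y)$ and $z'$ with $d(y,z')+d(z',z)=d(y,z)$, but the inequalities are identical to yours.
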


\begin{proof}
We begin with a property of a general metric space: Let $x, y, z \in X$ be such that $x < y < z$, so that $d(x, y) + d(y, z) = d(x, z)$. If $x' \in X$ and $z' \in X$ satisfy
\begin{align*}
d(x, x') + d(x', y) &= d(x, y), &
d(y, z') + d(z', z) &= d(y, z),
\end{align*}
then we have $d(x', y) + d(y, z') = d(x', z')$. This follows from
\begin{align*}
d(x', z') &= (d(x, x') + d(x', z') + d(z', z)) - (d(x, x') + d(z', z)) \\
&\ge
d(x, z) - d(x, x') - d(z', z) \\
&=
d(x, y) + d(y, z) - d(x, x') - d(z', z)
=
d(x', y) + d(y, z')
\end{align*}
and the triangle inequality $d(x', z') \le d(x', y) + d(y, z')$. 

Now, let $f : [0, d(x, y)] \to X$ be a geodesic joining $x$ to $y$, and $g : [0, d(y, z)] \to X$ a geodesic joining $y$ to $z$. We define a map $h : [0, d(x, z)] \to X$ by
$$
h(t) =
\left\{
\begin{array}{ll}
f(t), & (t \in [0, d(x, y)]) \\
g(t - d(x, y)). & (t \in [d(x, y), d(x, z)])
\end{array}
\right.
$$
Since $h(d(x, y)) = y$, the lemma will be completed by showing that $h$ is a geodesic joining $x$ to $z$. It is clear that $h(0) = f(0) = x$ and $h(d(x, z)) = g(d(y, z)) = z$. If $t, t' \in [0, d(x, y)]$ or $t, t' \in [d(x, y), d(x, z)]$, then $\lvert t' - t \rvert = d(h(t'), h(t))$ is also clear. If $t \in [0, d(x, y)]$ and $t' \in [d(x, y), d(x, z)]$, then we have
\begin{align*}
\lvert t - t' \rvert
&= \lvert t - d(x, y) \rvert  + \lvert - t' + d(x, y) \rvert \\
&= d(f(t), f(d(x, y))) + d(g(0), g(t' - d(x, y))) 
= d(x', y) + d(y, z'),
\end{align*}
where we put $x' = f(t)$ and $z' = g(t' - d(x, y))$. Notice that
\begin{align*}
d(x, y) 
&= 
\lvert 0 - t \rvert + \lvert t - d(x, y) \rvert
= d(x, x') + d(x', y), 
\\
d(y, z) 
&= 
\lvert 0 - t' + d(x, y) \rvert + \lvert t' - d(x, y) - d(y, z) \rvert
=
d(y, z') + d(z', z).
\end{align*}
Therefore we get
$$
\lvert t - t' \rvert
= d(x', y) + d(y, z')
= d(x', z')
= d(h(t), h(t')),
$$
and $h$ is a geodesic joining $x$ to $z$.
\end{proof}


\section{The second magnitude homology}
\label{sec:second_magnitude_homology}


\subsection{The case of general metric space}

Let $(X, d)$ be a metric space, $\ell$ a real number, and $a, b \in X$ two points. As is seen, the direct summand $H^\ell_2(a, b)$ of the magnitude homology $H^\ell_2(X)$ is trivial if $\ell < d(a, b)$, and is identified with the homology $\overline{H}_2(A(a, b))$ if $\ell = d(a, b)$ by Proposition \ref{prop:length_is_the_lower_bound}. Thus, we study the remaining case $\ell > d(a, b)$ by using the smoothness spectral sequence $E^r_{p, q}(a, b)$.

\begin{prop} \label{prop:E_11}
Let $(X, d)$ be a metric space, $\ell$ a real number, and $E_{p, q}^r(a, b)$ the smoothness spectral sequence for the direct summand $H^\ell_*(a, b)$ of the magnitude homology with $a, b \in X$. If $\ell > d(a, b)$, then $E^\infty_{1, 1}(a, b) = 0$. 
\end{prop}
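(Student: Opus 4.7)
The plan is to show that the $E^1$-term already vanishes, from which the conclusion about $E^\infty_{1,1}(a,b)$ is immediate. Recall that $E^1_{p,q}(a,b)$ is the free abelian group generated by proper $(p+q)$-chains $\gamma\in P^\ell_{p+q}(a,b)$ with exactly $p$ smooth points. For the term $E^1_{1,1}(a,b)$ we therefore look at proper $2$-chains $\gamma=\langle a,x,b\rangle$ of length $\ell$ having exactly one smooth point.

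First I would record the basic observation, already used in the excerpt to justify $E^1_{p,0}=0$ for $p\ge 1$, that the endpoints of any chain $\langle x_0,\dots,x_n\rangle$ are automatically singular: the condition $x_{i-1}<x_i<x_{i+1}$ cannot be imposed on $i=0$ or $i=n$ because one of the required neighbors does not exist. In particular, for a $2$-chain $\langle a,x,b\rangle$ the points $a$ and $b$ are singular, so the only possible smooth point is the middle point $x$.

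Next I would translate the requirement $\sigma(\gamma)=1$ into the statement that $x$ is smooth, namely $a<x<b$, which by definition means $d(a,x)+d(x,b)=d(a,b)$. On the other hand, the length condition on $\gamma$ says $d(a,x)+d(x,b)=\ell$. Combining these gives $\ell=d(a,b)$, contradicting the hypothesis $\ell>d(a,b)$. Hence no such $\gamma$ exists and $E^1_{1,1}(a,b)=0$, which forces $E^r_{1,1}(a,b)=0$ for every $r\ge 1$, and in particular $E^\infty_{1,1}(a,b)=0$.

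There is no real obstacle here; the proof is essentially a bookkeeping argument once the definition of smoothness at the endpoints is invoked. The only subtlety worth being careful about in the write-up is to state explicitly that the endpoints of a chain are always singular, so that the reader sees why the single smooth point must be the interior vertex of the $2$-chain.
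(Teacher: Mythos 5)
Your argument is correct and coincides with the paper's proof: both compute $E^1_{1,1}(a,b)$ directly, observe that a generator $\langle a,x,b\rangle$ would need its middle point smooth (the endpoints being automatically singular), and conclude that this forces $\ell = d(a,b)$, so the $E^1$-term — hence the $E^\infty$-term — vanishes when $\ell > d(a,b)$. Nothing is missing; the explicit remark about endpoints always being singular is a welcome touch of care.
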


\begin{proof}
Recall that $E^1_{p, q}(a, b)$ is identified with the free abelian group generated by the proper $(p+q)$-chains $\gamma \in P^\ell_{p+q}(a, b)$ such that $\sigma(\gamma) = p$. Thus, we have
\begin{align*}
E^1_{1, 1}(a, b) &=
\left\{
\begin{array}{ll}
\bigoplus_{a < x < b}
\Z \langle a, x, b \rangle, & (d(a, b) = \ell) \\
0. & (d(a, b) \neq \ell)
\end{array}
\right.
\end{align*}
As a result, if $d(a, b) \neq \ell$, then $E^\infty_{1, 1}(a, b) = 0$. 
\end{proof}

Next, we study $E^\infty_{0, 2}(a, b) = E^3_{0, 2}(a, b)$. For this aim, we recall:
\begin{align*}
E^1_{0, 2}(a, b) &= 
\bigoplus_{
\stackrel{a \neq \varphi \neq b, a \not< \varphi \not< b}
{\ell(\langle a, \varphi, b \rangle) = \ell}
} 
\Z \langle a, \varphi, b \rangle
=
\bigoplus_{\ell = d(a, \varphi) + d(\varphi, b) > d(a, b)}
\Z \langle a, \varphi, b \rangle, \\
E^1_{1, 2}(a, b) &=
\bigoplus_{
\stackrel{a < x < \varphi, x \not< \varphi \not< b, \varphi \neq b}
{\ell(\langle a, x, \varphi, b \rangle) = \ell}
}
\Z \langle a, x, \varphi, b \rangle 
\oplus
\bigoplus_{
\stackrel{a \neq \varphi, a \not< \varphi \not< x, \varphi < x < b}
{\ell(\langle a, \varphi, x, b \rangle) = \ell}
}
\Z \langle a, \varphi, x, b \rangle.
\end{align*}
We also recall the explicit description of $E^1_{2, 1}(a, b)$
$$
E^1_{2, 1}(a, b) =
\bigoplus_{
\stackrel{a < x_1 < x_2 < b}{\ell(\langle a, x_1, x_2, b \rangle) = \ell}
}
\Z \langle a, x_1, x_2, b \rangle.
$$

\begin{dfn} \label{dfn:simplicial_complex_B}
Let $(X, d)$ be a metric space, $\ell$ a real number, and $a, b \in X$ two points. In the case that $\ell > d(a, b)$, we define a $1$-dimensional simplicial complex $B^\ell(a, b)$ as follows:
\begin{enumerate}
\item[(a)]
A vertex ($0$-simplex) of $B^\ell(a, b)$ is a point $\varphi \in X$ such that:
\begin{enumerate}
\item[(i)]
$\ell = d(a, \varphi) + d(\varphi, b)$;

\item[(ii)]
there exists no point $x \in X$ such that $\langle a, x, \varphi, b \rangle \in P^\ell_3(\langle a, \varphi, b \rangle)$ or $\langle a, \varphi, x, b \rangle \in P^\ell_3(\langle a, \varphi, b \rangle)$; and

\item[(iii)]
there is no $\psi \in X$ such that: 
\begin{itemize}
\item
$\langle a, \varphi, \psi, b \rangle \in P^\ell_3(\langle a, b \rangle)$ and there exists no $x \in X$ such that $\langle a, x, \psi, b \rangle \in P^\ell_3(\langle a, \psi, b \rangle)$ or $\langle a, \psi, x, b \rangle \in P^\ell_3(\langle a, \psi, b \rangle)$; or 

\item
$\langle a, \psi, \varphi, b \rangle \in P^\ell_3(\langle a, b \rangle)$ and there exists no $x \in X$ such that $\langle a, x, \psi, b \rangle \in P^\ell_3(\langle a, \psi, b \rangle)$ or $\langle a, \psi, x, b \rangle \in P^\ell_3(\langle a, \psi, b \rangle)$.
\end{itemize}

\end{enumerate}

\item[(b)]
An edge ($1$-simplex) $\{ x, y \}$ of $B^\ell(a, b)$ consists of two vertices $\varphi$ and $\psi$ of $B^\ell(a, b)$ such that: $\langle a, \varphi, \psi, b \rangle \in P^\ell_3(\langle a, b \rangle)$; or $\langle a, \psi, \varphi, b \rangle \in P^\ell_3(\langle a, b \rangle)$.

\item[(c)]
We give each $1$-simplex $\{ \varphi, \psi \}$ an orientation by an order $\prec$ of the vertices: If $a < \varphi < \psi < b$, then $\varphi \prec \psi$. If $a < \psi < \varphi < b$, then $\psi \prec \varphi$. 
\end{enumerate}
\end{dfn}

We let $(C_*(B^\ell(a, b)), \partial)$ be the oriented chain complex of the simplicial complex $B^\ell(a, b)$, and $H_*(B^\ell(a, b))$ its homology.

\begin{lem} \label{lem:decomposition_of_E1_21}
Let $(X, d)$ be a metric space, $\ell$ a real number, and $E^r_{p, q}(a, b)$ the smoothness spectral sequence for the direct summand $H^\ell_*(a, b)$ of the magnitude homology with $a, b \in X$. We define ${}'E^1_{2, 1}(a, b), {}''E^1_{2, 1}(a, b) \subset E^1_{2, 1}(a, b)$ as follows.
\begin{itemize}
\item
${}'E^1_{2, 1}(a, b) \subset E^1_{2, 1}(a, b)$ is the subgroup spanned by $3$-chains $\langle a, x_1, x_2, b \rangle \in P^\ell_3(a, b)$ which are not $4$-cuts ($a < x_1 < b$ and $a < x_2 < b$), 

\item
${}''E^1_{2, 1}(a, b) \subset E^1_{2, 1}(a, b)$ is the subgroup spanned by $3$-chains $\langle a, x_1, x_2, b \rangle \in P^\ell_3(a, b)$ which are $4$-cuts ($a \not< x_1 \not< b$ and $a \not< x_2 \not< b$).
\end{itemize}
Then $E^1_{2, 1}(a, b)$ admits the direct sum decomposition
$$
E^1_{2, 1}(a, b) = {}'E^1_{2, 1}(a, b) \oplus {}''E^1_{2, 1}(a, b).
$$
\end{lem}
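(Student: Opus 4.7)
The plan is to verify that every generator of $E^1_{2,1}(a,b)$ falls into exactly one of the two declared classes, so that the two subgroups together span $E^1_{2,1}(a,b)$ as a direct sum.

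First, I would recall from the discussion preceding the lemma that $E^1_{2,1}(a,b)$ is the free abelian group on proper $3$-chains $\gamma = \langle a, x_1, x_2, b \rangle$ of length $\ell$ with $\sigma(\gamma) = 2$, which forces both $x_1$ and $x_2$ to be smooth, i.e., $a < x_1 < x_2 < b$. For each such chain the definition of a $4$-cut recalled earlier in the paper gives a clean dichotomy driven by the triangle inequality $\ell(\gamma) \ge d(a,b)$: either $\ell(\gamma) = d(a,b)$, in which case $\gamma$ is not a $4$-cut, or $\ell(\gamma) > d(a,b)$, in which case $\gamma$ is a $4$-cut.

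Next, I would match these two alternatives with the parenthetical descriptions in the definitions of ${}'E^1_{2,1}(a,b)$ and ${}''E^1_{2,1}(a,b)$. In the non-$4$-cut case, $d(a, x_1) + d(x_1, x_2) + d(x_2, b) = d(a, b)$; combined with $d(a, x_1) + d(x_1, x_2) \ge d(a, x_2)$ and the reverse triangle inequality $d(a, x_2) + d(x_2, b) \ge d(a,b)$, this forces $d(a, x_i) + d(x_i, b) = d(a,b)$ for $i = 1, 2$, i.e., $a < x_1 < b$ and $a < x_2 < b$. In the $4$-cut case, the equivalence recalled in the paper (a proper $3$-chain with $x_0 < x_1 < x_2 < x_3$ is a $4$-cut iff $x_0 \not< x_1 \not< x_3$ iff $x_0 \not< x_2 \not< x_3$) yields $a \not< x_1 \not< b$ and $a \not< x_2 \not< b$.

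Since the basis elements of ${}'E^1_{2,1}(a,b)$ and ${}''E^1_{2,1}(a,b)$ are disjoint subsets of the free abelian basis of $E^1_{2,1}(a,b)$ whose union exhausts it, the direct sum decomposition follows immediately. No real obstacle arises in this argument---it is pure bookkeeping from the definitions of smoothness, of $4$-cut, and the triangle inequality. I would note in passing that this dichotomy is actually forced by $\ell$ itself (precisely one of the two summands is nontrivial for a given $\ell$), but keeping the decomposition in this symbolic form is convenient for treating the cases $\ell = d(a,b)$ and $\ell > d(a,b)$ in parallel when analyzing the differential $d^1 : E^1_{2,1}(a,b) \to E^1_{1,1}(a,b)$ afterwards.
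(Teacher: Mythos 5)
Your proposal is correct and matches the paper's (omitted, "straightforward") verification: the generators of $E^1_{2,1}(a,b)$ are partitioned by the $4$-cut dichotomy, and the parenthetical betweenness conditions follow from the triangle inequality and the equivalence for $4$-cuts recalled in the preliminaries. Your closing observation that for a fixed $\ell$ exactly one summand can be nontrivial is also accurate and consistent with how the decomposition is used later.
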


\begin{proof}
The verification is straightforward.
\end{proof}

\begin{lem} \label{lem:E_02}
Let $(X, d)$ be a metric space, $\ell$ a real number, and $E_{p, q}^r(a, b)$ the smoothness spectral sequence for the direct summand $H^\ell_*(a, b)$ of the magnitude homology with $a, b \in X$. Then $E^\infty_{0, 2}(a, b)$ has the expression
$$
E^\infty_{0, 2}(a, b) = 
E^1_{0, 2}(a, b)/(d^1(E^1_{1, 2}(a, b)) + \partial ({}''E^1_{2, 1}(a, b))),
$$
where $d^1(E^1_{1, 2}(a, b)) + \partial ({}''E^1_{2, 1}(a, b))$ is the subgroup consisting of the elements which are expressed as the sums of elements in $d^1(E^1_{1, 2}(a, b))$ and $\partial ({}''E^1_{2, 1}(a, b))$.
\end{lem}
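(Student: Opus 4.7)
The plan is to identify $E^\infty_{0,2}(a,b)$ with $E^3_{0,2}(a,b)$ (as already noted in the summary after the smoothness spectral sequence discussion) and then to unravel the $d^1$ and $d^2$ differentials. Since $E^1_{p,q}(a,b)$ vanishes whenever $p<0$ or $q<0$, both $d^1\colon E^1_{0,2}\to E^1_{-1,2}$ and $d^2\colon E^2_{0,2}\to E^2_{-2,3}$ are zero. Consequently,
$$
E^\infty_{0,2}(a,b)=E^3_{0,2}(a,b)=\frac{E^1_{0,2}(a,b)/d^1(E^1_{1,2}(a,b))}{\mathrm{im}\bigl(d^2\colon E^2_{2,1}(a,b)\to E^2_{0,2}(a,b)\bigr)},
$$
and the lemma reduces to identifying $\mathrm{im}(d^2)$ with the image of $\partial({}''E^1_{2,1}(a,b))$ in $E^2_{0,2}(a,b)$.

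Next, I would exploit the direct sum decomposition of Lemma \ref{lem:decomposition_of_E1_21} to separate the contributions to $d^1$ and $d^2$. For a non-4-cut $\langle a,x_1,x_2,b\rangle\in {}'E^1_{2,1}(a,b)$, the hypotheses $a<x_1<b$ and $a<x_2<b$ force the boundary $-\langle a,x_2,b\rangle+\langle a,x_1,b\rangle$ to consist of smooth 2-chains ($\sigma=1$), so it lies in $F_1C^\ell_2$ but not in $F_0C^\ell_2$. By contrast, for a 4-cut in ${}''E^1_{2,1}(a,b)$ the conditions $a\not<x_i\not<b$ make both 2-chains in the boundary singular ($\sigma=0$), placing $\partial\gamma$ in $F_0C^\ell_2$. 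Hence every element of ${}''E^1_{2,1}(a,b)$ automatically lies in $\ker d^1$, and its class in $E^2_{2,1}(a,b)$ is sent by $d^2$ to the class of $\partial\gamma$ in $E^2_{0,2}(a,b)$ by the standard description of the $d^2$ differential of a filtration spectral sequence. This shows the inclusion $\pi(\partial({}''E^1_{2,1}(a,b)))\subseteq\mathrm{im}(d^2)$, where $\pi\colon E^1_{0,2}\to E^2_{0,2}$ is the projection.

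For the reverse inclusion, I would take an arbitrary class in $E^2_{2,1}(a,b)$ and write a representative as $\gamma=\gamma'+\gamma''$ with $\gamma'\in{}'E^1_{2,1}(a,b)$ and $\gamma''\in{}''E^1_{2,1}(a,b)$. The cycle condition $d^1[\gamma]=0$ translates to $\partial\gamma'+\partial\gamma''\in F_0C^\ell_2(a,b)$. Since $\partial\gamma''$ already lies in $F_0$ and $\partial\gamma'$ is a $\mathbb{Z}$-linear combination of smooth 2-chains (which are linearly independent in $C^\ell_2(a,b)$ from singular ones), this forces $\partial\gamma'=0$ identically. Therefore $d^2[\gamma]=[\partial\gamma'+\partial\gamma'']=[\partial\gamma'']\in\pi(\partial({}''E^1_{2,1}(a,b)))$, completing the equality.

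The main obstacle is precisely the last step: showing that the apparent $d^2$-contribution from ${}'E^1_{2,1}(a,b)$ vanishes identically, not merely modulo $d^1(E^1_{1,2}(a,b))$. The point is that within $C^\ell_2(a,b)$ the smooth 2-chains (those with $a<\varphi<b$) and the singular 2-chains (those with $a\not<\varphi\not<b$) are distinct basis elements, so a $\mathbb{Z}$-linear combination of smooth 2-chains can lie in $F_0C^\ell_2$ only if it is zero. Once this observation is in hand, the identification of $\mathrm{im}(d^2)$ with $\pi(\partial({}''E^1_{2,1}(a,b)))$ is immediate, and the formula follows.
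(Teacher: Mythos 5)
Your proposal is correct and follows essentially the same route as the paper: reduce to $E^3_{0,2}$, use the standard description of $d^2$ on a representative in $\ker d^1 \subset E^1_{2,1}$, and exploit the decomposition of Lemma \ref{lem:decomposition_of_E1_21} together with the observation that smooth and singular $2$-chains are distinct basis elements of $C^\ell_2(a,b)$, so the cycle condition forces $\partial\gamma'=0$ on the nose. The only cosmetic difference is that the paper passes through the intermediate identity $\partial(Z^1_{2,1}(a,b))=\partial({}''E^1_{2,1}(a,b))$ and explicitly notes that $\partial(d^1(E^1_{3,1}(a,b)))\subset d^1(E^1_{1,2}(a,b))$, a point your framing absorbs into the well-definedness of $d^2$.
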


\begin{proof}
As mentioned, $E^\infty_{0, 2}(a, b) = E^3_{0, 2}(a, b)$, and we have
\begin{align*}
E^3_{0, 2}(a, b) 
&= E^2_{0, 2}(a, b)/d^2(E^2_{2, 1}(a, b))
= (E^1_{0, 2}(a, b)/d^1(E^1_{1, 2}(a, b)))/d^2(E^2_{2, 1}(a, b)) \\
E^2_{2, 1}(a, b) 
&= Z^1_{2, 1}(a, b)/d^1(E^1_{3, 1}(a, b)),
\end{align*}
where we put $Z^1_{2, 1}(a, b) = \mathrm{Ker}[d^1 : E^1_{2, 1}(a, b) \to E^1_{1, 1}(a, b)]$. By definition, if an element $\bar{\gamma} \in E^2_{2, 1}(a, b)$ is represented by $\gamma \in Z^1_{2, 1}(a, b)$, then $d^2(\bar{\gamma})$ in $E^2_{0, 2}(a, b)$ is represented by $\partial \gamma \in E^1_{0, 2}(a, b)$. By the general nature of the spectral sequence, if $\gamma \in d^1(E^1_{3, 1}(a, b))$, then $\partial \gamma \in d^1(E^1_{1, 2}(a, b))$. This fact allows us to express
$$
E^3_{0, 2}(a, b) = 
E^1_{0, 2}(a, b)/(d^1(E^1_{1, 2}(a, b)) + \partial (Z^1_{2, 1}(a, b))).
$$
Applying Lemma \ref{lem:decomposition_of_E1_21} here, we uniquely decompose $\gamma \in Z^1_{2, 1}(a, b)$ as $\gamma = \gamma' + \gamma''$, where $\gamma' \in {}'E^1_{2, 1}(a, b)$ and $\gamma'' \in {}''E^1_{2, 1}(a, b)$. Since $d^1({}''E^1_{2, 1}(a, b)) = 0$ generally, we get $d^1\gamma' = d^1\gamma = 0$. We also have $d^1\gamma' = \partial \gamma'$ generally, so that $\partial \gamma = \partial \gamma''$. Consequently, we find $\partial (Z^1_{2, 1}(a, b)) = \partial ({}''E^1_{2, 1}(a, b))$, and the lemma is proved.
\end{proof}

\begin{prop} \label{prop:E_02}
Let $(X, d)$ be a metric space, $\ell$ a real number, and $a, b \in X$ two points. If $\ell > d(a, b)$, then we have an isomorphism of groups
$$
E^\infty_{0, 2}(a, b) \cong H_0(B^\ell(a, b)).
$$
\end{prop}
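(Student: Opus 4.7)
My plan is to invoke Lemma~\ref{lem:E_02} and then exhibit both $E^\infty_{0,2}(a,b)$ and $H_0(B^\ell(a,b))$ as quotients of the same free abelian group on 2-chains $\langle a, \varphi, b\rangle$ satisfying condition~(i) of Definition~\ref{dfn:simplicial_complex_B}.

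First, I would make the two presentations explicit. The group $E^1_{0,2}(a,b)$ is free abelian on $\langle a, \varphi, b\rangle$ with $d(a,\varphi)+d(\varphi,b)=\ell$. A direct computation from the formula for $d^1$ shows that $d^1(E^1_{1,2}(a,b))$ is spanned by those $\langle a, \varphi, b\rangle$ for which $\varphi$ admits a smooth filler --- i.e., violates condition~(ii) --- since $d^1 \langle a, x, \varphi, b\rangle = -\langle a, \varphi, b\rangle$ and $d^1 \langle a, \varphi, x, b\rangle = +\langle a, \varphi, b\rangle$ whenever $x$ is smooth and $\varphi$ is singular in the ambient chain. Similarly, $\partial$ applied to a 4-cut $\langle a, \varphi, \psi, b\rangle \in {}''E^1_{2,1}(a,b)$ yields $\langle a, \varphi, b\rangle - \langle a, \psi, b\rangle$, so $\partial({}''E^1_{2,1}(a,b))$ is generated by these differences indexed by 4-cuts $a<\varphi<\psi<b$ of length~$\ell$. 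The simplicial complex $B^\ell(a,b)$ is designed precisely so that its vertices pick out canonical representatives of length-$\ell$ generators modulo the above relations and its 1-simplices record the 4-cut pairs between such representatives.

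Next I would build the isomorphism by constructing two maps. Define $\Phi \colon C_0(B^\ell(a,b)) \to E^\infty_{0,2}(a,b)$ by $\varphi \mapsto [\langle a, \varphi, b\rangle]$; since each 1-simplex $\{\varphi,\psi\}$ of $B^\ell(a,b)$ comes from a 4-cut, its boundary $[\psi]-[\varphi]$ maps to an element of $\partial({}''E^1_{2,1}(a,b))$, hence vanishes in $E^\infty_{0,2}(a,b)$. Thus $\Phi$ descends to $\bar{\Phi}\colon H_0(B^\ell(a,b)) \to E^\infty_{0,2}(a,b)$. Conversely, define $\Psi \colon E^1_{0,2}(a,b) \to H_0(B^\ell(a,b))$ on a generator $\langle a, \varphi, b\rangle$ by: sending it to $[\varphi]$ when $\varphi$ is a vertex of $B^\ell(a,b)$; to $0$ when $\varphi$ violates~(ii); and, when $\varphi$ violates~(iii), by invoking the 4-cut partner whose existence is guaranteed by the negation of~(iii) to reduce $\varphi$ to an equivalent point, iterating until a vertex is reached.

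The principal obstacle is the well-definedness of $\Psi$ and the verification that it kills both $d^1(E^1_{1,2}(a,b))$ and $\partial({}''E^1_{2,1}(a,b))$. Different sequences of 4-cut reductions starting from $\varphi$ might terminate at different vertices of $B^\ell(a,b)$, so one must show that any two such vertices are joined by an edge path in $B^\ell(a,b)$, whence their difference lies in $\partial C_1(B^\ell(a,b))$ and vanishes in $H_0(B^\ell(a,b))$. This is where the precise form of conditions (ii) and (iii), together with the edge definition via 4-cuts, is essential: they guarantee that every ambient 4-cut relation among representatives is witnessed by an edge of $B^\ell(a,b)$. Granting this coherence, a routine check shows that $\bar{\Phi}$ and $\Psi$ are mutual inverses, yielding the required isomorphism.
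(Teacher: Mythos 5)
Your overall strategy coincides with the paper's: both start from Lemma~\ref{lem:E_02}, identify $d^1(E^1_{1,2}(a,b))$ with the span of the chains $\langle a,\varphi,b\rangle$ whose $\varphi$ violates condition~(ii), identify $\partial({}''E^1_{2,1}(a,b))$ with the span of the $4$-cut differences $\langle a,\varphi,b\rangle-\langle a,\psi,b\rangle$, and then try to match the resulting quotient with $C_0(B^\ell(a,b))/\partial C_1(B^\ell(a,b))$. These preliminary identifications are correct. The gap is in your map $\Psi$, and it is not a routine one. First, the value you assign to $\langle a,\varphi,b\rangle$ when $\varphi$ satisfies (i) and (ii) but fails (iii) is wrong: the negation of (iii) hands you a $4$-cut partner $\psi$ that \emph{fails} condition (ii), so $\langle a,\psi,b\rangle$ already dies in $d^1(E^1_{1,2}(a,b))$ and the relation $\langle a,\varphi,b\rangle-\langle a,\psi,b\rangle\in\partial({}''E^1_{2,1}(a,b))$ forces $[\langle a,\varphi,b\rangle]=0$ in $E^\infty_{0,2}(a,b)$. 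The correct value of $\Psi$ there is $0$, not the class of some vertex; ``iterating until a vertex is reached'' computes the wrong element, and in any case neither termination of the iteration nor independence of the reduction path is established. Second, your proposed repair for well-definedness --- that two terminal vertices of different reduction chains are joined by an edge path in $B^\ell(a,b)$ --- does not follow: the intermediate points of such a chain are not vertices of $B^\ell(a,b)$, and edges of $B^\ell(a,b)$ require \emph{both} endpoints to be vertices, so the ambient $4$-cut relations along the chain are not edges of the complex. Since you then write ``granting this coherence, a routine check shows\ldots'', the essential content of the proof is deferred rather than supplied.

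For comparison, the paper never defines a map on individual problematic generators. It sets $\mathcal{P}=P^\ell_2(a,b)$, splits it as $\mathcal{P}=\mathcal{Q}\sqcup\mathcal{Q}'$ (according to condition (ii)) and as $\mathcal{R}\sqcup\mathcal{R}'$ (according to condition (iii)), splits the set $\tilde{\mathcal{S}}$ of $4$-cut differences into the pieces $\mathcal{S},\mathcal{S}',\mathcal{S}''$ according to whether the two endpoints lie in $\mathcal{Q}$ or $\mathcal{Q}'$, and then computes the quotient of Lemma~\ref{lem:E_02} purely group-theoretically, arriving at $\Z(\mathcal{Q}\cap\mathcal{R})/\Z\mathcal{S}'''$, which is literally $C_0(B^\ell(a,b))/\partial(C_1(B^\ell(a,b)))$. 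If you want to keep your two-map format, the fix is to define $\Psi$ on generators by: $[\varphi]$ if $\varphi$ is a vertex of $B^\ell(a,b)$, and $0$ otherwise, and then verify that every relation in $d^1(E^1_{1,2}(a,b))+\partial({}''E^1_{2,1}(a,b))$ is killed --- the only delicate case being a $4$-cut difference with exactly one endpoint a vertex, which is precisely what conditions (ii) and (iii) are designed to control.
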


\begin{proof}
Let $\mathcal{P} = P^\ell_2(a, b)$ be the set of base elements of $E^1_{0, 2}(a, b)$. We introduce the following subsets $\mathcal{Q}, \mathcal{Q}' \subset \mathcal{P}$
\begin{align*}
\mathcal{Q}
&= \{ \langle a, \varphi, b \rangle |
\mbox{$\nexists x$ such that 
$\langle a, x, \varphi, b \rangle \in P^\ell_3(\langle a, \varphi, b \rangle)$ 
or
$\langle a, \varphi, x, b \rangle \in P^\ell_3(\langle a, \varphi, b \rangle)$}
\}, \\
\mathcal{Q}'
&= \{ \langle a, \varphi, b \rangle |
\mbox{$\exists x$ such that 
$\langle a, x, \varphi, b \rangle \in P^\ell_3(\langle a, \varphi, b \rangle)$ 
or
$\langle a, \varphi, x, b \rangle \in P^\ell_3(\langle a, \varphi, b \rangle)$}
\}.
\end{align*}
We also introduce subsets $\mathcal{R}, \mathcal{R}' \subset \mathcal{P}$
\begin{align*}
\mathcal{R}
&=
\bigg\{
\langle a, \varphi, b \rangle \bigg|\
\begin{array}{l}
\mbox{$\nexists \psi$ such that: 
$\langle a, \varphi, \psi, b \rangle \in P^\ell_3(\langle a, b \rangle)$ and 
$\langle a, \psi, b \rangle \in \mathcal{Q}'$; or} \\
\mbox{
$\langle a, \psi, \varphi, b \rangle \in P^\ell_3(\langle a, b \rangle)$ and 
$\langle a, \psi, b \rangle \in \mathcal{Q}'$} 
\end{array}
\bigg\}, \\
\mathcal{R}'
&=
\bigg\{
\langle a, \varphi, b \rangle \bigg|\
\begin{array}{l}
\mbox{$\exists \psi$ such that: 
$\langle a, \varphi, \psi, b \rangle \in P^\ell_3(\langle a, b \rangle)$ and 
$\langle a, \psi, b \rangle \in \mathcal{Q}'$; or} \\
\mbox{
$\langle a, \psi, \varphi, b \rangle \in P^\ell_3(\langle a, b \rangle)$ and 
$\langle a, \psi, b \rangle \in \mathcal{Q}'$} 
\end{array}
\bigg\}.
\end{align*}
We have $\mathcal{P} = \mathcal{Q} \sqcup \mathcal{Q}' = \mathcal{R} \sqcup \mathcal{R}'$. By definition, $E^1_{0, 2}(a, b) = \Z \mathcal{P}$. It is easy to see $d^1(E^1_{1, 2}(a, b)) = \Z \mathcal{Q}'$, so that 
$$
E^2_{0, 2}(a, b) 
= \Z \mathcal{P}/\Z \mathcal{Q}' 
= (\Z \mathcal{Q} \oplus \Z \mathcal{Q}')/\Z\mathcal{Q}'
\cong \Z \mathcal{Q}.
$$
Let $\tilde{\mathcal{S}} \subset \Z \mathcal{P}$ be the subset
$$
\tilde{\mathcal{S}}
= 
\{ \langle a, \varphi, b \rangle - \langle a, \psi, b \rangle |\
\mbox{$\langle a, \varphi, \psi, b \rangle \in P^\ell_3(\langle a, b \rangle)$
or $\langle a, \psi, \varphi, b \rangle \in P^\ell_3(\langle a, b \rangle)$}
\}.
$$
This subset is the disjoint union of $\mathcal{S}, \mathcal{S}', \mathcal{S}'' \subset \tilde{\mathcal{S}}$ give by
\begin{align*}
\mathcal{S}
&= 
\{ \langle a, \varphi, b \rangle - \langle a, \psi, b \rangle 
\in \tilde{\mathcal{S}} |\
\langle a, \varphi, b \rangle, \langle a, \psi, b \rangle \in \mathcal{Q}
\}, \\
\mathcal{S}'
&= 
\{ \langle a, \varphi, b \rangle - \langle a, \psi, b \rangle 
\in \tilde{\mathcal{S}} |\
\langle a, \varphi, b \rangle, \langle a, \psi, b \rangle \in \mathcal{Q}'
\},\\
\mathcal{S}''
&= 
\bigg\{ \langle a, \varphi, b \rangle - \langle a, \psi, b \rangle 
\in \tilde{\mathcal{S}} \bigg|\
\begin{array}{l}
\mbox{$\langle a, \varphi, b \rangle \in \mathcal{Q}$ and 
$\langle a, \psi, b \rangle \in \mathcal{Q}'$; or} \\
\mbox{$\langle a, \varphi, b \rangle \in \mathcal{Q}'$ and 
$\langle a, \psi, b \rangle \in \mathcal{Q}$}
\end{array}
\bigg\}.
\end{align*}
Note that $\partial({}''E^1_{2, 1}(a, b)) = \Z \tilde{\mathcal{S}} = \Z \mathcal{S} \oplus \Z \mathcal{S}' \oplus \Z \mathcal{S}''$. Note also $\Z \mathcal{S} \subset \Z \mathcal{Q}$ and $\Z\mathcal{S}' \subset \Z\mathcal{Q}'$. Now, we compute $E^\infty_{0, 2}(a, b)$ along Lemma \ref{lem:E_02} to get
$$
E^\infty_{0, 2}(a, b)
= \Z\mathcal{P}/(\Z \mathcal{Q}' 
+ \Z \mathcal{S} \oplus \Z \mathcal{S}' \oplus \Z \mathcal{S}'')
= \Z\mathcal{Q}/(\Z \mathcal{S} + \Z(\mathcal{Q} \cap \mathcal{R}')).
$$
Since $\mathcal{Q} = (\mathcal{Q} \cap \mathcal{R}) \sqcup (\mathcal{Q} \cap \mathcal{R}')$, we find $E^\infty_{0, 2}(a, b) = \Z(\mathcal{Q} \cap \mathcal{R})/\Z \mathcal{S}'''$, where $\mathcal{S}'''$ is the following subset of $\mathcal{S}$
$$
\mathcal{S}'''
= 
\{ \langle a, \varphi, b \rangle - \langle a, \psi, b \rangle 
\in \tilde{\mathcal{S}} |\
\langle a, \varphi, b \rangle, \langle a, \psi, b \rangle \in 
\mathcal{Q} \cap \mathcal{R}
\}.
$$
Recalling Definition \ref{dfn:simplicial_complex_B}, we can identify $\Z(\mathcal{Q} \cap \mathcal{R})$ with the free abelian group $C_0(B^\ell(a, b))$ generated by the vertices in $B^\ell(a, b)$, and $\Z \mathcal{S}'''$ with the image of the boundary map $\partial(C_1(B^\ell(a, b)))$, and hence $E^\infty_{0, 2}(a, b)$ with $H_0(B^\ell(a, b))$.
\end{proof}

We summarize the results so far to get a general description of the direct summand of the second magnitude homology.

\begin{thm} \label{thm:main_description}
Let $(X, d)$ be a metric space, $\ell$ a real number, and $a, b \in X$ two points. Then, by using the homology groups of the simplicial complexes $A(a, b)$ and $B^\ell(a, b)$, we can express the direct summand $H^\ell_2(a, b)$ of the second magnitude homology $H^\ell_2(X)$ as follows 
$$
H^\ell_2(a, b) \cong 
\left\{
\begin{array}{ll}
0, & (\ell < d(a, b)) \\
\overline{H}_0(A(a, b)), & (\ell = d(a, b)) \\
H_0(B^\ell(a, b)). & (\ell > d(a, b))
\end{array}
\right.
$$
\end{thm}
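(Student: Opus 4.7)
The proof will assemble three cases, each of which is essentially already prepared by earlier results in the paper; the main work is simply to organize them under the uniform statement.

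For the first case $\ell < d(a,b)$, the plan is to invoke the length estimate recalled in the text: every proper chain $\langle a, x_1, \ldots, x_{n-1}, b\rangle$ has length at least $d(a,b)$ by iterated triangle inequality, so $C^\ell_n(a,b) = 0$ for $\ell < d(a,b)$ and in particular $H^\ell_2(a,b) = 0$. No spectral sequence is needed here.

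For the second case $\ell = d(a,b)$, the plan is to quote Proposition \ref{prop:length_is_the_lower_bound} with $n = 2$, which directly gives the isomorphism $H^\ell_2(a,b) \cong \overline{H}_0(A(a,b))$.

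For the third case $\ell > d(a,b)$, the plan is to combine the short exact sequence arising from the smoothness spectral sequence, namely
$$
0 \to E^\infty_{0,2}(a,b) \to H^\ell_2(a,b) \to E^\infty_{1,1}(a,b) \to 0,
$$
with Proposition \ref{prop:E_11}, which shows that $E^\infty_{1,1}(a,b) = 0$ in this case (since the generators $\langle a, x, b\rangle$ of $E^1_{1,1}(a,b)$ force $a < x < b$ and hence $\ell = d(a,b)$), and with Proposition \ref{prop:E_02}, which identifies $E^\infty_{0,2}(a,b)$ with $H_0(B^\ell(a,b))$. The vanishing of $E^\infty_{1,1}(a,b)$ makes the sequence collapse to the isomorphism $H^\ell_2(a,b) \cong E^\infty_{0,2}(a,b) \cong H_0(B^\ell(a,b))$.

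There is really no main obstacle at the level of this theorem itself: all the heavy lifting (the combinatorial bookkeeping in Lemma \ref{lem:E_02} and Proposition \ref{prop:E_02}, which carefully matches the generators and relations defining $B^\ell(a,b)$ to the quotient $E^1_{0,2}/(d^1 E^1_{1,2} + \partial\,{}''E^1_{2,1})$) has already been carried out. The proof of Theorem \ref{thm:main_description} is therefore simply a two-line assembly citing the three inputs above.
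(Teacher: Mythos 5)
Your proposal is correct and follows exactly the route the paper takes: the length lower bound for $\ell < d(a,b)$, Proposition \ref{prop:length_is_the_lower_bound} for $\ell = d(a,b)$, and for $\ell > d(a,b)$ the short exact sequence $0 \to E^\infty_{0,2}(a,b) \to H^\ell_2(a,b) \to E^\infty_{1,1}(a,b) \to 0$ combined with Propositions \ref{prop:E_11} and \ref{prop:E_02}. Nothing is missing; as you note, the theorem is an assembly of results already established.
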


\begin{proof}
As is seen already, $H^\ell_2(a, b) = 0$ for $\ell < d(a, b)$, and $H^\ell_2(a, b) \cong \overline{H}_0(a, b)$ for $\ell = d(a, b)$ by Proposition \ref{prop:length_is_the_lower_bound}. Thus, we assume $\ell > d(a, b)$. Recall then the short exact sequence
$$
0 \to E^\infty_{0, 2}(a, b) \to H^\ell_2(a, b) \to E^\infty_{1, 1}(a, b) \to 0.
$$
By Proposition \ref{prop:E_11}, we have $E^\infty_{1, 1}(a, b) = 0$. Hence the theorem follows from the isomorphism $E^\infty_{0, 2}(a, b) \cong H_0(B^\ell(a, b))$ in Proposition \ref{prop:E_02}. 
\end{proof}

\begin{cor} \label{cor:main_descripton:torsion_free}
For any metric space $(X, d)$ and any real number $\ell$, the second magnitude homology group $H^\ell_2(X)$ is torsion free.
\end{cor}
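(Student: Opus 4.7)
The plan is to deduce this directly from Theorem \ref{thm:main_description} together with the intrinsic direct sum decomposition $H^\ell_2(X) = \bigoplus_{a,b \in X} H^\ell_2(a,b)$ recalled in the preliminaries. Since a direct sum of torsion-free abelian groups is torsion free, it suffices to show that each summand $H^\ell_2(a,b)$ is torsion free. By Theorem \ref{thm:main_description}, each such summand is isomorphic to one of three groups: the zero group (when $\ell < d(a,b)$), the reduced zeroth homology $\overline{H}_0(A(a,b))$ of a simplicial complex (when $\ell = d(a,b)$), or the zeroth homology $H_0(B^\ell(a,b))$ of a simplicial complex (when $\ell > d(a,b)$).

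The key point is then the elementary observation that for any simplicial complex $K$, both $H_0(K)$ and $\overline{H}_0(K)$ are free abelian. Indeed, $H_0(K)$ is free abelian of rank equal to the number of connected components of $K$, and $\overline{H}_0(K) = \ker[\epsilon: C_0(K) \to \mathbb{Z}]/\partial C_1(K)$ agrees with $H_0(K)$ modulo a single $\mathbb{Z}$-summand (or is zero when $K$ is empty, by our convention). Either way, it is a free abelian group, hence torsion free. Applying this to $K = A(a,b)$ and $K = B^\ell(a,b)$ finishes the case analysis, and assembling the summands yields the corollary. There is no real obstacle here, as the statement is a formal consequence of Theorem \ref{thm:main_description} once one records the standard fact about zeroth simplicial homology.
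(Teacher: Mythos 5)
Your argument is exactly the one the paper intends: the corollary is stated without proof precisely because it follows from Theorem \ref{thm:main_description}, the direct sum decomposition $H^\ell_2(X) = \bigoplus_{a,b} H^\ell_2(a,b)$, and the standard fact that the zeroth (reduced or unreduced) homology of a simplicial complex is free abelian. Your write-up is correct and matches the paper's reasoning.
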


\begin{cor} \label{cor:main_description:menger_convex}
Let $(X, d)$ be a metric space, and $\ell$ a positive number. Suppose that for any distinct points $x, z \in X$ and any positive number $\epsilon$ there exists $y \in X$ such that $x < y < z$ and $d(x, y) < \epsilon$. Then, for any $a, b \in X$, we have an isomorphism 
$$
H^\ell_2(a, b) \cong 
\left\{
\begin{array}{ll}
\overline{H}_0(A(a, b)), & (\ell = d(a, b)) \\
0. & (\ell \neq d(a, b))
\end{array}
\right.
$$
\end{cor}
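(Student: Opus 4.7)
The plan is to leverage Theorem \ref{thm:main_description}: the cases $\ell < d(a,b)$ and $\ell = d(a,b)$ are handled outright there, so only the case $\ell > d(a,b)$ requires work, and in that case the theorem supplies $H^\ell_2(a,b) \cong H_0(B^\ell(a,b))$. Since the zeroth homology of a simplicial complex is free abelian of rank equal to the number of connected components (hence zero precisely when the complex is empty), the task reduces to showing that, under the hypothesis, $B^\ell(a,b)$ has no vertices at all.

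To execute this, I would suppose $\varphi \in X$ is a candidate vertex, so that $d(a,\varphi) + d(\varphi,b) = \ell > d(a,b)$; in particular $\varphi \notin \{a,b\}$ and $a \not< \varphi \not< b$. The plan is to apply the hypothesis to the distinct points $a$ and $\varphi$ with a sufficiently small $\epsilon$ to produce a point $x$ with $a < x < \varphi$ such that $\langle a, x, \varphi, b\rangle$ lies in $P^\ell_3(\langle a, \varphi, b\rangle)$, which directly violates condition (ii) of Definition \ref{dfn:simplicial_complex_B} and prevents $\varphi$ from being a vertex. The length of this $3$-chain equals $\ell$ automatically because $a < x < \varphi$, and properness follows from $a \neq x \neq \varphi \neq b$; the subtle point is the identification of the frame.

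The main obstacle is therefore verifying that $\varphi$ is singular in $\langle a, x, \varphi, b\rangle$, i.e., $x \not< \varphi \not< b$. Using $a < x < \varphi$ one computes $d(x,\varphi) + d(\varphi,b) = \ell - d(a,x)$, and this must strictly exceed $d(x,b)$. Since the triangle inequality gives $d(x,b) \leq d(a,x) + d(a,b)$, it suffices to arrange $2 d(a,x) < \ell - d(a,b)$, which is achieved by choosing $\epsilon = (\ell - d(a,b))/2 > 0$. With that choice, the strict inequality $d(x,b) < d(x,\varphi) + d(\varphi,b)$ holds, so $\varphi$ is singular, the frame is indeed $\langle a, \varphi, b\rangle$, and the contradiction is obtained. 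Consequently $B^\ell(a,b)$ has no vertices, $H_0(B^\ell(a,b)) = 0$, and the corollary follows.
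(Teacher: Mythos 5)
Your argument is correct. For $\ell > d(a,b)$ you invoke Theorem \ref{thm:main_description} and show that $B^\ell(a,b)$ has no vertices: any $\varphi$ with $d(a,\varphi)+d(\varphi,b)=\ell$ automatically satisfies $a\neq\varphi\neq b$ and $a\not<\varphi\not< b$, and the hypothesis with $\epsilon=(\ell-d(a,b))/2$ produces $x$ with $a<x<\varphi$ for which $d(x,\varphi)+d(\varphi,b)=\ell-d(a,x)>d(a,x)+d(a,b)\ge d(x,b)$, so $\varphi$ stays singular in $\langle a,x,\varphi,b\rangle$ and condition (ii) of Definition \ref{dfn:simplicial_complex_B} fails. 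That correctly forces $C_0(B^\ell(a,b))=0$, hence $H_0(B^\ell(a,b))=0$. The paper's own proof is a one-line citation: the hypothesis implies $E^\infty_{0,2}=0$ by the result of \cite{G} (this is exactly the role of the hypothesis noted after Lemma \ref{lem:a_strong_Menger_convexity}), which combined with Proposition \ref{prop:E_11} kills $H^\ell_2(a,b)$ for $\ell>d(a,b)$. Since $E^\infty_{0,2}(a,b)\cong H_0(B^\ell(a,b))$ by Proposition \ref{prop:E_02}, the two routes are equivalent in content, and the quantitative step you carry out (inserting a point close enough to $a$ that $\varphi$ remains singular) is the same mechanism underlying the cited vanishing. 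What your version buys is self-containedness within this paper, at the level of the complex $B^\ell(a,b)$ rather than the spectral sequence; what the paper's citation buys is brevity and the stronger statement $E^2_{0,q}=0$ for all $q\ge 2$, which is needed again later (e.g.\ in Theorem \ref{thm:third_homology_on_geodesic_space}).
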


\begin{proof}
As shown in \cite{G}, the assumption in the corollary implies $E^\infty_{0, 2} = 0$.
\end{proof}

By Lemma \ref{lem:a_strong_Menger_convexity}, the assumption in Corollary \ref{cor:main_description:menger_convex} is satisfied in the case that $(X, d)$ is a geodesic metric space.

\medskip

\begin{rem}
In \cite{L-S} (Theorem 7.22), some sufficient conditions for the vanishing of $H^\ell_2(X)$ are given. Concretely, if a metric space $X$ is geodetic and either:
\begin{itemize}
\item[(a)]
$X$ is Menger convex and has no $4$-cuts; or

\item[(b)]
$X$ is geodesic,
\end{itemize}
then $H^\ell_2(X) = 0$ for any $\ell$. In the context of the present paper, this result can be seen as follows: In general, the geodeticy of $X$ and the absence of $4$-cuts imply $E^\infty_{1, 1} = 0$. Then, each of (a) and (b) implies $E^\infty_{0, 2} = 0$. In the case of (a), the proof in \cite{L-S} is actually showing $E^\infty_{0, 2} = 0$. In the case of (b), the assumption in Corollary \ref{cor:main_description:menger_convex} is satisfied, and $E^\infty_{0, 2} = 0$.
\end{rem}

\begin{rem}
In \cite{L-S} (Theorem 7.25), a description of $H^\ell_2(X)$ in terms of a free abelian group is given, under the assumption that $X$ is geodetic and has no $4$-cuts. This result can also be seen in the context of the present paper as follows: As is remarked, the geodeticy and the absence of $4$-cuts imply $E^\infty_{1, 1} = 0$. The absence of $4$-cuts then implies the $E^2$-degeneracy of the spectral sequence \cite{G} (Theorem 3.9). The absence of $4$-cuts also simplifies $B^\ell(a, b)$, and we get
$$
H^\ell_2(X) = E^2_{0, 2} = \bigoplus_{a, b \in X} C_0(B^\ell(a, b)),
$$
which reproduces the description in \cite{L-S}.
\end{rem}


\subsection{The case of geodesic metric space}
\label{subsec:second_homology_on_geodesic_space}

Let $(X, d)$ be a geodesic metric space, $\ell$ a real number, and $H^\ell_2(a, b)$ the direct summand of the second magnitude homology $H^\ell_2(X)$ for $a, b \in X$. Then, by Lemma \ref{lem:a_strong_Menger_convexity} and Corollary \ref{cor:main_description:menger_convex}, the group $H^\ell_2(a, b)$ can be non-trivial only when $\ell = d(a, b)$, and is identified with, in this case, the reduced homology $\overline{H}_0(A(a, b))$ of the simplicial complex $A(a, b)$. We here interpret $\overline{H}_0(A(a, b))$ by means of certain equivalence classes of geodesics. 

\smallskip

For this aim, we apply a description of the zeroth homology of a simplicial complex (see \cite{Spa}): In general, a simplicial complex induces a groupoid, i.e.\ a category whose morphisms are invertible. In the groupoid induced from $A(a, b)$, an object is a $0$-simplex in $A(a, b)$, namely $x \in X$ such that $a < x < b$. For each object $x$, there is the identity morphism. The other morphisms are specified by a $1$-simplex, namely, there are morphisms $x \to y$ and $y \to x$ inverse to each other if $a < x < y < b$. Finally a $2$-simplex specifies a composition of morphisms: If $a < x < y < z < b$, then the morphisms $x \to y$ and $y \to z$ compose to give $x \to z$. Now, let $\pi_0(A(a, b))$ denote the set of isomorphism classes in the groupoid induced from the simplicial complex $A(a, b)$. Then the zeroth unreduced homology group of $A(a, b)$ is isomorphic to the free abelian group generated by the isomorphism classes in $\pi_0(A(a, b))$
$$
H_0(A(a, b)) \cong \Z[\pi_0(A(a, b))] 
= \bigoplus_{[x] \in \pi_0(A(a, b))} \Z[x].
$$
We have a surjective homomorphism $\epsilon : \Z[\pi_0(A(a, b))] \to \Z$ given by $\epsilon([x]) = 1$ for each isomorphism class $[x]$ in the groupoid. Its kernel is isomorphic to the zeroth reduced homology of $A(a, b)$
$$
\overline{H}_0(A(a, b))
\cong
\mathrm{Ker}[\epsilon : \Z[\pi_0(A(a, b))] \to \Z].
$$

\begin{dfn} \label{dfn:equivalence_geodesic}
Let $(X, d)$ be a metric space. 
\begin{itemize}
\item
For $a, b \in X$, we let $\Geod(a, b)$ be the set of geodesics joining $a$ to $b$. 

\item
We introduce a relation $\sim$ to $\Geod(a, b)$ as follows: Let $f : [0, d(a, b)] \to X$ and $g : [0, d(a, b)] \to X$ be geodesics joining $a$ to $b$. Then $f \sim g$ if and only if they intersect at a point other than $a$ or $b$. Concretely, there exists $t \in (0, d(a, b))$ such that $f(t) = g(t)$. This is equivalent to 
$$
f((0, d(a, b))) \cap g((0, d(a, b))) \neq \emptyset.
$$

\item
We define $\approx$ to be the equivalence relation generated by $\sim$. 

\item
We write $\pi_0(\Geod(a, b)) = \Geod(a, b)/\!\!\approx$ for the set of equivalence classes.
\end{itemize}
\end{dfn}

We can introduce a groupoid such that the set of its isomorphism classes is exactly $\Geod(a, b)/\!\!\approx$. The detailed formulation is left to interested readers.

\begin{lem} \label{lem:map_Phi}
Let $(X, d)$ be a geodesic space. For distinct points $a, b \in X$, there is a well-defined map
$$
\Phi : \ 
\pi_0(A(a, b)) \to \pi_0(\Geod(a, b)).
$$
\end{lem}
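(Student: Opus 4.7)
The plan is to define $\Phi$ by sending (the class of) a vertex $x$ of $A(a,b)$ to the class in $\pi_0(\Geod(a,b))$ of any geodesic from $a$ to $b$ that passes through $x$. First I would invoke Lemma~\ref{lem:geodesic_from_x_y_z} on the triple $a < x < b$, which supplies a geodesic $h \in \Geod(a,b)$ with $h(d(a,x)) = x$; this makes the assignment $x \mapsto [h]$ available. Independence of the choice of $h$ is immediate: any two geodesics $h, h'$ joining $a$ to $b$ and passing through $x$ share the interior point $x$, so $h \sim h'$ and they represent the same $\approx$-class. Thus we get a well-defined map $\widetilde{\Phi}$ from the set of vertices of $A(a,b)$ to $\pi_0(\Geod(a,b))$.

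The substantive step is to descend $\widetilde{\Phi}$ to $\pi_0(A(a,b))$. Since the equivalence relation on vertices is generated by membership in a common $1$-simplex, it suffices to verify $\widetilde{\Phi}(x_1) = \widetilde{\Phi}(x_2)$ whenever $\{x_1,x_2\}$ is a $1$-simplex with induced order $a < x_1 \prec x_2 < b$, i.e.\ $d(a,x_1) + d(x_1,x_2) + d(x_2,b) = d(a,b)$. Triangle inequalities applied to $d(a,x_2)$ and $d(a,b)$ force the two auxiliary relations $a < x_1 < x_2$ and $a < x_2 < b$. Applying Lemma~\ref{lem:geodesic_from_x_y_z} to the triple $a < x_1 < x_2$ yields a geodesic $k \colon [0, d(a,x_2)] \to X$ joining $a$ to $x_2$ and passing through $x_1$; choosing any geodesic $g \colon [0, d(x_2,b)] \to X$ from $x_2$ to $b$ and concatenating $k$ with $g$ at $x_2$ produces a map $H \colon [0, d(a,b)] \to X$ which visits both $x_1$ and $x_2$. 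The concatenation computation in the proof of Lemma~\ref{lem:geodesic_from_x_y_z} applies verbatim to show $H \in \Geod(a,b)$, so any geodesic through $x_i$ is $\sim H$ for $i=1,2$, and therefore $\widetilde{\Phi}(x_1) = [H] = \widetilde{\Phi}(x_2)$. This factors $\widetilde{\Phi}$ through the desired map $\Phi$.

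The only real obstacle, and it is a mild one, is the concatenation step: one must confirm that the glued path $H$ is a genuine geodesic on the whole interval $[0, d(a,b)]$ rather than merely a concatenation of geodesic segments. This is precisely the content of the computation already carried out in the proof of Lemma~\ref{lem:geodesic_from_x_y_z}, applied to the pair consisting of the geodesic $k$ and the geodesic $g$ meeting at $x_2$. Everything else in the argument — the well-definedness at the level of vertices, the reduction from the generating relation on $\pi_0(A(a,b))$ to a single $1$-simplex, and the final factorisation — amounts to routine bookkeeping.
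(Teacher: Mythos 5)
Your proposal is correct and follows essentially the same route as the paper: define $\Phi$ on vertices via Lemma \ref{lem:geodesic_from_x_y_z}, and check descent along a $1$-simplex by building a single geodesic from $a$ to $b$ through both of its vertices (the paper calls this ``a simple generalization of the proof of Lemma \ref{lem:geodesic_from_x_y_z}''; you spell out the concatenation). Your explicit check that the choice of geodesic through a fixed vertex does not matter is a point the paper leaves implicit, but otherwise the arguments coincide.
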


\begin{proof}
We construct $\Phi$ as follows: Let $x$ be an object in the groupoid induced from the simplicial complex $A(a, b)$. This means that $x$ is a $0$-simplex in $A(a, b)$, namely, a point $x \in X$ such that $a < x < b$. By Lemma \ref{lem:geodesic_from_x_y_z}, there is a geodesic $f_x : [0, d(a, b)] \to X$ joining $a$ to $b$ and passing through $x$. We then put $\Phi([x]) = [f_x]$. 

This $\Phi$ is well-defined. To see this, let $x$ and $y$ be distinct isomorphic objects in the groupoid induced from $A(a, b)$. Hence $x, y \in X$ satisfy $a < x < y < b$ or $a < y < x < b$. By a simple generalization of the proof of Lemma \ref{lem:geodesic_from_x_y_z}, we can construct a geodesic $g : [0, d(a, b)] \to X$ joining $a$ to $b$ and passing through $x$ and $y$. Thus, in the (equivalence) relation of $\Geod(a, b)$, we have $f_x \sim g \sim f_y$. This proves that $\Phi$ is well-defined.
\end{proof}

\begin{lem}  \label{lem:map_Psi}
Let $(X, d)$ be a geodesic space. For distinct points $a, b \in X$, there is a well-defined map
$$
\Psi : \ \pi_0(\Geod(a, b)) \to \pi_0(A(a, b)).
$$
\end{lem}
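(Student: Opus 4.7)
The plan is to define $\Psi$ by evaluating a representative geodesic at any interior parameter. Given $f \in \Geod(a,b)$, seen as a map $f : [0, d(a,b)] \to X$, I would pick any $t \in (0, d(a,b))$ and set $\Psi([f]) = [f(t)]$, meaning the isomorphism class of the vertex $f(t)$ in the groupoid induced from the simplicial complex $A(a,b)$. The first point to verify is that $f(t)$ is in fact a vertex of $A(a,b)$; but as already observed in the proof of Lemma \ref{lem:a_strong_Menger_convexity}, for any $t \in (0, d(a,b))$ the point $f(t)$ satisfies $a < f(t) < b$, which is exactly the defining condition of a $0$-simplex of $A(a,b)$.

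Next I would check that $[f(t)]$ does not depend on the choice of $t$. Take $t_1, t_2 \in (0, d(a,b))$ with $t_1 \neq t_2$; say $t_1 < t_2$, and set $x_i = f(t_i)$. Since $d(x_1, x_2) = t_2 - t_1 > 0$ these are distinct, and the geodesic property of $f$ yields
$$
d(a, x_1) + d(x_1, x_2) + d(x_2, b) = t_1 + (t_2 - t_1) + (d(a,b) - t_2) = d(a,b).
$$
Therefore $\{x_1, x_2\}$ is a $1$-simplex of $A(a,b)$, so $x_1$ and $x_2$ are connected by a morphism in the induced groupoid and $[x_1] = [x_2] \in \pi_0(A(a,b))$.

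Finally I would show that $\Psi$ descends to the quotient by $\approx$. Since $\approx$ is generated by $\sim$, it suffices to verify that $f \sim g$ forces $\Psi([f]) = \Psi([g])$. If $f \sim g$, then by definition there exists $t \in (0, d(a,b))$ with $f(t) = g(t)$. Using this common $t$ in the construction of both $\Psi([f])$ and $\Psi([g])$ — legitimate by the independence established in the previous paragraph — gives the same vertex and hence the same class in $\pi_0(A(a,b))$.

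There is no serious obstacle in this argument: the construction is essentially forced and dual to that of $\Phi$ in Lemma \ref{lem:map_Phi}, which went from a vertex of $A(a,b)$ to a geodesic through it via Lemma \ref{lem:geodesic_from_x_y_z}. The only mild subtlety is the verification that any two distinct interior points on a single geodesic span a $1$-simplex of $A(a,b)$, but this follows at once from the isometric parametrization of geodesics. (The construction of $\Psi$ here will presumably be used in tandem with $\Phi$ in a subsequent corollary to identify $\pi_0(A(a,b))$ with $\pi_0(\Geod(a,b))$ and obtain the geodesic interpretation of $H^\ell_2(a,b)$ announced in the introduction.)
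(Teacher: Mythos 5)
Your proposal is correct and follows essentially the same route as the paper: define $\Psi([f]) = [f(t)]$ for an interior parameter $t$, verify that two interior points of one geodesic span a $1$-simplex of $A(a,b)$ (hence give the same class), and use the common intersection parameter to descend through the relation $\sim$ generating $\approx$. No gaps.
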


\begin{proof}
The construction of $\Psi$ is as follows: Let $f : [0, d(a, b)] \to X$ be a geodesic joining $a$ to $b$. If we choose $t \in (0, d(a, b))$, then $a \neq f(t) \neq b$ and
$$
d(a, b) = \lvert 0 - t \rvert + \lvert t - d(a, b) \rvert
= d(a, f(t)) + d(f(t), b).
$$
That is, $a < f(t) < b$. Now, we put $\Psi([f]) = [f(t)]$. 

To prove that this $\Psi$ is well-defined, let $t' \in (0, d(a, b))$ be another choice. Without loss of generality, we can assume $t < t'$. Then we see
\begin{align*}
d(a, f(t')) &= \lvert 0 - t \rvert + \lvert t - t' \rvert
= d(a, f(t)) + d(f(t), f(t')), \\
d(f(t), b) &= \lvert t - d(a, b) \rvert
= \lvert t - t' \rvert + \lvert t' - d(a, b) \rvert
= d(f(t), f(t')) + d(f(t'), b).
\end{align*}
It follows that $a < f(t) < f(t') < b$, so that $f(t)$ and $f(t')$ are isomorphic as objects in the groupoid induced from $A(a, b)$. To complete the proof, let $g : [0, d(a, b)] \to X$ be another geodesic joining $a$ to $b$ such that $f \sim g$. This means that $f(t_0) = g(t_0)$ for some $t_0 \in (0, d(a, b))$. Therefore $\Psi([f]) = [f(t_0)] = [g(t_0)] = \Psi([g])$, and the map $\Psi$ is well-defined. 
\end{proof}

\begin{thm}
Let $(X, d)$ be a geodesic space. For distinct points $a, b \in X$, there is a bijection
$$
\pi_0(A(a, b)) \cong \pi_0(\Geod(a, b)).
$$
Consequently, there is an isomorphism of groups
$$
H_0(A(a, b)) \cong \Z[\pi_0(\Geod(a, b))].
$$
\end{thm}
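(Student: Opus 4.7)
The plan is to establish the bijection $\pi_0(A(a,b)) \cong \pi_0(\Geod(a,b))$ by showing that the two maps $\Phi$ and $\Psi$ constructed in Lemmas \ref{lem:map_Phi} and \ref{lem:map_Psi} are mutually inverse. Once this bijection is in hand, the claimed isomorphism of abelian groups follows immediately from the identification $H_0(A(a,b)) \cong \Z[\pi_0(A(a,b))]$ that was recalled at the beginning of this subsection.

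First, I would verify the composition $\Psi \circ \Phi = \mathrm{id}_{\pi_0(A(a,b))}$. Given a vertex $x$ of $A(a,b)$, the map $\Phi$ sends $[x]$ to $[f_x]$, where $f_x : [0, d(a,b)] \to X$ is some geodesic passing through $x$, say $f_x(t_0) = x$ with $t_0 \in (0, d(a,b))$. The well-definedness of $\Psi$ proved in Lemma \ref{lem:map_Psi} allows evaluation at any interior parameter, so in particular $\Psi([f_x]) = [f_x(t_0)] = [x]$.

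Next, for $\Phi \circ \Psi = \mathrm{id}_{\pi_0(\Geod(a,b))}$, let $f \in \Geod(a,b)$ and pick any $t \in (0, d(a,b))$; setting $x = f(t)$ gives $\Psi([f]) = [x]$. By construction $\Phi([x]) = [h]$, where $h$ is a chosen geodesic joining $a$ to $b$ through $x$. Since $f$ itself is a geodesic joining $a$ to $b$ that meets $h$ at the interior point $x$, the generating relation yields $h \sim f$, hence $[h] = [f]$ in $\pi_0(\Geod(a,b))$.

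The only subtlety worth highlighting is the implicit choice of $f_x$ in the definition of $\Phi$: one must check that the resulting class in $\pi_0(\Geod(a,b))$ does not depend on this choice. But if $f_x$ and $f'_x$ are two geodesics joining $a$ to $b$ and both passing through the interior point $x$, then they share the point $x \neq a, b$, so $f_x \sim f'_x$ and hence $[f_x] = [f'_x]$. With both compositions being the identity, $\Phi$ and $\Psi$ are inverse bijections. Tensoring the bijection with $\Z$ and combining with $H_0(A(a,b)) \cong \Z[\pi_0(A(a,b))]$ then yields the stated isomorphism $H_0(A(a,b)) \cong \Z[\pi_0(\Geod(a,b))]$.
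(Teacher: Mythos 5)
Your proposal is correct and follows the same route as the paper, which simply asserts that $\Phi$ and $\Psi$ are mutually inverse; you supply the explicit verification of both composites and rightly flag the independence of the choice of geodesic through $x$, which the paper leaves implicit.
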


\begin{proof}
The map $\Phi$ in Lemma \ref{lem:map_Phi} and $\Psi$ in Lemma \ref{lem:map_Psi} are inverse to each other. The remaining claim follows from $H_0(A(a, b)) \cong \Z[\pi_0(A(a, b))]$.
\end{proof}

Under the bijection in the theorem, the natural augmentation $\epsilon : H_0(A(a, b)) \to \Z$ corresponds to the homomorphism $\epsilon : \Z[\pi_0(\Geod(a, b))] \to \Z$ that carries the equivalence class $[f]$ of each geodesic $f \in \Geod(a, b)$ to $\epsilon([f]) = 1$. Using this induced augmentation, we can finally describe the direct summand $H^\ell_2(a, b)$ in terms of geodesics.

\begin{cor} \label{cor:second_homology_on_geodesic_space}
Let $(X, d)$ be a geodesic space, and $\ell$ a real number. For $a, b \in X$, we can describe the direct summand $H^\ell_2(a, b)$ of the second magnitude homology as follows 
$$
H^\ell_2(a, b) \cong
\left\{
\begin{array}{ll}
\mathrm{Ker}[\epsilon : \Z[\pi_0(\Geod(a, b))] \to \Z], & (d(a, b) = \ell) \\
0. & (d(a, b) \neq \ell)
\end{array}
\right.
$$
\end{cor}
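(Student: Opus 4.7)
The plan is to assemble this corollary from three pieces already in place: the vanishing/identification result for $H^\ell_2(a,b)$ on metric spaces with a strong Menger-convexity property, the theorem immediately preceding the corollary that identifies $\pi_0(A(a,b))$ with $\pi_0(\Geod(a,b))$, and the standard description of the zeroth reduced simplicial homology as the kernel of the augmentation.

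First I would handle the case $\ell \neq d(a, b)$. Because $(X, d)$ is geodesic, Lemma \ref{lem:a_strong_Menger_convexity} guarantees that between any two distinct points $x, z \in X$ and for any $\epsilon > 0$ there is a strictly intermediate point $y$ with $d(x, y) < \epsilon$. This is precisely the hypothesis of Corollary \ref{cor:main_description:menger_convex}, so we conclude $H^\ell_2(a,b) = 0$ for $\ell \neq d(a,b)$ (and $H^\ell_2(a,b) \cong \overline{H}_0(A(a,b))$ for $\ell = d(a,b)$).

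Next, for the case $\ell = d(a, b)$, I would combine the isomorphism $H^\ell_2(a,b) \cong \overline{H}_0(A(a,b))$ just mentioned with the theorem proved immediately above the corollary, which yields $H_0(A(a,b)) \cong \Z[\pi_0(\Geod(a,b))]$. The final ingredient is the short exact sequence $0 \to \overline{H}_0(A(a,b)) \to H_0(A(a,b)) \xrightarrow{\epsilon} \Z \to 0$ from the definition of reduced homology, which identifies $\overline{H}_0(A(a,b))$ with $\mathrm{Ker}[\epsilon : H_0(A(a,b)) \to \Z]$.

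The only substantive point that needs verification -- and I expect it to be the main (though mild) obstacle -- is checking that the two augmentations match under the bijection $\Phi : \pi_0(A(a,b)) \to \pi_0(\Geod(a,b))$. This is however transparent from the construction: the standard augmentation sends each class $[x] \in \pi_0(A(a,b))$ to $1$, and via $\Phi$ this class corresponds to $[f_x] \in \pi_0(\Geod(a,b))$, which the induced augmentation likewise sends to $1$. Hence the isomorphism of groups $H_0(A(a,b)) \cong \Z[\pi_0(\Geod(a,b))]$ restricts to an isomorphism of their kernels, and chaining the three identifications produces the stated description of $H^\ell_2(a,b)$.
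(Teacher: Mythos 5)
Your proposal is correct and follows the same route as the paper: the vanishing for $\ell \neq d(a,b)$ via Lemma \ref{lem:a_strong_Menger_convexity} and Corollary \ref{cor:main_description:menger_convex}, and for $\ell = d(a,b)$ the chain $H^\ell_2(a,b) \cong \overline{H}_0(A(a,b)) \cong \mathrm{Ker}\,\epsilon$ combined with the preceding theorem's bijection $\pi_0(A(a,b)) \cong \pi_0(\Geod(a,b))$, including the check that the augmentations correspond. No gaps.
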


If we choose a geodesic $\overline{f} \in \Geod(a, b)$ as a reference, then we can express
$$
H^{d(a, b)}_2(a, b) \cong
\bigoplus_{[f] \neq [\overline{f}]} \Gamma([f], [\overline{f}]),
$$
where $[f] \in \pi_0(\Geod(a, b))$ runs over geodesic classes such that $[f] \neq [\overline{f}]$, and $\Gamma([f], [\overline{f}]) = [f] - [\overline{f}] \in \Z[\pi_0(\Geod(a, b))]$. In the magnitude homology, $\Gamma([f], [\overline{f}])$ is represented by $\langle a, f(t), b \rangle - \langle a, \overline{f}(t), b \rangle \in C^{d(a, b)}_2(a, b)$, where $t \in (0, d(a, b))$.


\section{The third magnitude homology}
\label{sec:third_magnitude_homology}


\subsection{The case of geodesic metric space}

On a geodesic space $(X, d)$, the direct summand $H^\ell_2(a, b)$ of the second magnitude homology can be non-trivial only when $\ell = d(a, b)$, and this case admits a description by using $A(a, b)$ as given in Proposition \ref{prop:length_is_the_lower_bound}. The purpose of this subsection is to generalize this result to the direct summand $H^\ell_3(a, b)$ of the third magnitude homology. For its achievement, we will compute the smoothness spectral sequence with the aid of some lemmas.

\begin{lem} \label{lem:first_preliminary_to_kill_4_cut}
Let $(X, d)$ be a geodesic space. If we are given points $a, x, y, b \in X$ such that $a < x < y < b$, $a \not< y \not< b$ and $a \not< x \not< b$ (namely $\langle a, x, y, b \rangle$ is a $4$-cut), then there exists $z \in X$ such that $x < y < z < b$ and $a \not< y \not< z$.
\end{lem}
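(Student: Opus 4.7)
The plan is to construct $z$ as a point on a geodesic from $y$ to $b$, close to $b$. More precisely, let $g : [0, d(y, b)] \to X$ be a geodesic joining $y$ to $b$, whose existence is guaranteed because $(X, d)$ is geodesic and $y \neq b$. For $t \in (0, d(y, b))$, set $z = g(t)$. The idea is then to show that:
(i) the relations $x < y < z < b$ hold for any such $t$; and
(ii) by taking $t$ sufficiently close to $d(y, b)$, the strict inequality defining $a \not< y \not< z$ is forced by the hypothesis $a \not< y \not< b$.

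For (i), the relation $y < z < b$ is immediate from the defining property of the geodesic $g$. To get $x < y < z$, the input is the given relation $x < y < b$, which yields $d(x, b) = d(x, y) + d(y, b)$. Substituting $d(y, b) = d(y, z) + d(z, b)$ and applying the triangle inequality $d(x, b) \le d(x, z) + d(z, b)$, one finds $d(x, z) \ge d(x, y) + d(y, z)$. The reverse inequality is the triangle inequality, so $d(x, z) = d(x, y) + d(y, z)$, hence $x < y < z$ (the points are pairwise distinct since $t \in (0, d(y, b))$ and $x \neq y$).

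For (ii), set the ``excess'' $\varepsilon = d(a, y) + d(y, b) - d(a, b)$, which is strictly positive by the hypothesis $a \not< y \not< b$. Using the triangle inequality $d(a, z) \le d(a, b) + d(b, z)$ together with $d(b, z) = d(y, b) - t$, one has
$$
d(a, z) \le d(a, b) + d(y, b) - t = d(a, y) + 2 d(y, b) - \varepsilon - t.
$$
Choosing $t$ with $d(y, b) - \varepsilon/2 < t < d(y, b)$ (a nonempty range because $\varepsilon > 0$), this bound gives $d(a, z) < d(a, y) + t = d(a, y) + d(y, z)$, which is precisely $a \not< y \not< z$.

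The two steps are elementary distance estimates; the only real ingredient is the existence of a geodesic from $y$ to $b$, and the quantitative use of the ``excess'' $\varepsilon > 0$ that comes from the $4$-cut assumption. No serious obstacle is expected, but one must be careful to verify that the resulting $z$ is genuinely distinct from $y$ and $b$, which is ensured by the open interval condition on $t$.
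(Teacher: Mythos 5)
Your proof is correct and follows essentially the same route as the paper: both place $z$ on a geodesic ending at $b$, sufficiently close to $b$ that the strict excess $d(a,y)+d(y,b)-d(a,b)>0$ coming from the $4$-cut hypothesis persists. The only cosmetic differences are that the paper runs the geodesic from $x$ through $y$ (via Lemma \ref{lem:geodesic_from_x_y_z}) and detects the persistence by continuity of $F(t)=d(a,y)+d(y,f(t))-d(a,f(t))$, whereas you start the geodesic at $y$, verify $x<y<z$ by a direct triangle-inequality computation, and make the choice of $t$ explicit through the $\varepsilon/2$ estimate.
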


\begin{proof}
Since $x < y < b$, we have by Lemma \ref{lem:geodesic_from_x_y_z} a geodesic $f : [0, d(x, b)] \to X$ joining $x$ to $b$ and passing through $y$. We then consider a continuous function 
$$
F(t) = d(a, y) + d(y, f(t)) - d(a, f(t))
$$
defined on $d(x, y) \le t \le d(x, b)$. We have $F(d(x, b)) > 0$ because $a \not< y \not< b$. Thus, there is $t_0$ near $d(x, b)$ at which $F(t_0) > 0$. Put differently, there is $z = f(t_0)$ such that $a \not< y \not< z$. Since $z$ is on the geodesic $f$, it holds that $x < y < z < b$.
\end{proof}

\begin{lem} \label{lem:second_preliminary_to_kill_4_cut}
Let $(X, d)$ be a metric space. If we have 
\begin{itemize}
\item
$a, x, y, b \in X$ such that $a < x < y < b$; and 

\item
$z \in X$ such that $x < y < z < b$ and $a \not< y \not< z$,
\end{itemize}
then $a \not< x \not< z$ and $x < z < b$.
\end{lem}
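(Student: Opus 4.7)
The plan is essentially a direct computation from the betweenness identities, with no real geometric input beyond the triangle-like equalities and the single strict inequality $d(a,y)+d(y,z)>d(a,z)$ supplied by $a \not< y \not< z$.

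First I would establish the equality $x<z<b$. The hypothesis $x<y<z$ gives $d(x,z)=d(x,y)+d(y,z)$, the hypothesis $y<z<b$ (which follows from $x<y<z<b$) gives $d(y,z)+d(z,b)=d(y,b)$, and the hypothesis $x<y<b$ (which follows from $a<x<y<b$) gives $d(x,y)+d(y,b)=d(x,b)$. Chaining these,
\[
d(x,z)+d(z,b) = d(x,y) + d(y,z) + d(z,b) = d(x,y) + d(y,b) = d(x,b).
\]
The distinctness $x \neq z$ is forced because $d(x,z) = d(x,y) + d(y,z) > 0$ (since $x \neq y$, coming from $a<x<y$), and $z \neq b$ is part of the hypothesis $y<z<b$. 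So $x<z<b$.

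Next for $a \not< x \not< z$, the two inequalities $a \neq x$ and $x \neq z$ are immediate (the first from $a<x<y$, the second from the previous paragraph). To get the strict inequality $d(a,x)+d(x,z) > d(a,z)$, I would use $a<x<y$ in the form $d(a,x)+d(x,y)=d(a,y)$ to rewrite
\[
d(a,x) + d(x,z) = d(a,x) + d(x,y) + d(y,z) = d(a,y) + d(y,z),
\]
and then invoke the hypothesis $a \not< y \not< z$, which gives $d(a,y) + d(y,z) > d(a,z)$. Concatenating, $d(a,x) + d(x,z) > d(a,z)$, so $a \not< x \not< z$.

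There is no real obstacle: every step is a one-line substitution, and the lemma amounts to transporting the given failure of betweenness from the pair $(y,z)$ across the known betweenness $a<x<y$ to produce the desired failure for $(x,z)$. I would simply present it as a short calculation without any auxiliary construction.
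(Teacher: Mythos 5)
Your proof is correct and follows essentially the same route as the paper's: both arguments chain the betweenness equalities $d(a,x)+d(x,y)=d(a,y)$, $d(x,y)+d(y,z)=d(x,z)$, and $d(x,y)+d(y,b)=d(x,b)$ to transport the strict inequality from $a \not< y \not< z$ to the pair $(x,z)$ and to verify $d(x,z)+d(z,b)=d(x,b)$. The only difference is the order of the two conclusions and your (harmless) extra care with the distinctness conditions.
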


\begin{proof}
From $x < y < z$, $a < x < y$ and $a \not< y \not< z$, we get
\begin{align*}
d(a, x) + d(x, z) - d(a, z)
&= d(a, x) + d(x, y) + d(y, z) - d(a, z) \\
&= d(a, y) + d(y, z) - d(a, z) > 0,
\end{align*}
namely, $a \not< x \not< z$. From $x < y < z < b$ and $x < y < b$, we have
$$
d(x, z) + d(z, b) 
= d(x, y) + d(y, z) + d(z, b)
= d(x, y) + d(y, b)
= d(x, b),
$$
so that $x < z < b$. 
\end{proof}

\begin{prop} \label{prop:vanishing_E2_21_ell_general}
Let $(X, d)$ be a geodesic space, $\ell$ a real number, and $E^r_{p, q}(a, b)$ the smoothness spectral sequence for the direct summand $H^\ell_*(a, b)$ of the magnitude homology with $a, b \in X$. If $\ell > d(a, b)$, then $E^2_{2, 1}(a, b) = 0$.
\end{prop}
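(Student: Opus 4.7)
The plan is to show that $E^1_{2,1}(a,b)$ reduces to the $4$-cut part under $\ell > d(a,b)$, that $d^1$ vanishes on $4$-cuts so they all survive to the kernel, and then kill each $4$-cut as the image under $d^1$ of an explicit $4$-chain manufactured from Lemmas \ref{lem:first_preliminary_to_kill_4_cut} and \ref{lem:second_preliminary_to_kill_4_cut}.

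First I would record that, when $\ell > d(a,b)$, the subgroup ${}'E^1_{2,1}(a,b)$ from Lemma \ref{lem:decomposition_of_E1_21} is zero: indeed if $\langle a, x_1, x_2, b\rangle$ is a non-$4$-cut generator of $E^1_{2,1}(a,b)$, then $a < x_1 < b$ together with $x_1 < x_2 < b$ forces $d(a,x_1) + d(x_1,x_2) + d(x_2,b) = d(a,b)$, contradicting $\ell > d(a,b)$. Hence $E^1_{2,1}(a,b) = {}''E^1_{2,1}(a,b)$ is generated by $4$-cuts $\langle a, x, y, b\rangle$ with $a < x < y < b$ but $a \not< x \not< b$ and $a \not< y \not< b$. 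Next I would check that $d^1$ annihilates each $4$-cut: removing $x$ yields $\langle a, y, b\rangle$ in which $y$ is singular, and removing $y$ yields $\langle a, x, b\rangle$ in which $x$ is singular, so in each case the resulting chain has fewer than one smooth point and thus does not represent a class in $E^1_{1,1}(a,b)$. Consequently $\ker[d^1 : E^1_{2,1}(a,b) \to E^1_{1,1}(a,b)] = E^1_{2,1}(a,b)$, and so $E^2_{2,1}(a,b)$ equals the cokernel of $d^1 : E^1_{3,1}(a,b) \to E^1_{2,1}(a,b)$.

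The main step is to kill an arbitrary $4$-cut $\gamma = \langle a, x, y, b\rangle$. Using Lemma \ref{lem:first_preliminary_to_kill_4_cut} on the triple $a, y, b$ with $x < y$, I would produce $z \in X$ with $x < y < z < b$ and $a \not< y \not< z$; then Lemma \ref{lem:second_preliminary_to_kill_4_cut} gives $a \not< x \not< z$ and $x < z < b$. I would then consider the $4$-chain $\widetilde{\gamma} = \langle a, x, y, z, b\rangle$. Its length is $d(a,x) + d(x,y) + d(y,z) + d(z,b) = d(a,x) + d(x,b) = \ell$, using $x < y < z < b$ for the middle collapse and $x < y < b$ to identify $\ell = d(a,x) + d(x,b)$ for the original $4$-cut. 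The interior points $x, y, z$ are each smooth in $\widetilde{\gamma}$ (from $a < x < y$, $x < y < z$, $y < z < b$), so $\widetilde{\gamma} \in E^1_{3,1}(a,b)$. Computing $d^1\widetilde{\gamma}$: deleting $x$ leaves $y$ singular in $\langle a, y, z, b\rangle$ (since $a \not< y \not< z$), deleting $y$ leaves $x$ singular in $\langle a, x, z, b\rangle$ (since $a \not< x \not< z$), so both terms vanish in $E^1_{2,1}(a,b)$; deleting $z$ returns exactly $\langle a, x, y, b\rangle = \gamma$ (which lies in $E^1_{2,1}(a,b)$ by construction). Thus $d^1\widetilde{\gamma} = -\gamma$, so every generator of $E^1_{2,1}(a,b)$ is a $d^1$-boundary, giving $E^2_{2,1}(a,b) = 0$.

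The only nontrivial obstacle is producing the auxiliary point $z$, and that is exactly the content of Lemma \ref{lem:first_preliminary_to_kill_4_cut}, which uses the geodesic hypothesis; once $z$ is in hand, everything else is a direct verification of strict betweenness and a bookkeeping computation of $d^1$.
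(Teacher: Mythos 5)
Your proposal is correct and follows essentially the same route as the paper: reduce to the observation that every generator is a $4$-cut when $\ell > d(a,b)$, then use Lemmas \ref{lem:first_preliminary_to_kill_4_cut} and \ref{lem:second_preliminary_to_kill_4_cut} to produce $z$ and verify $d^1\langle a,x,y,z,b\rangle = -\langle a,x,y,b\rangle$. The only cosmetic difference is that you justify $\ker d^1 = E^1_{2,1}(a,b)$ by checking $d^1$ on each $4$-cut, whereas the paper simply notes that the target $E^1_{1,1}(a,b)$ vanishes for $\ell > d(a,b)$.
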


\begin{proof}
If $\ell > d(a, b)$, then $E^1_{1, 1}(a, b) = 0$ and $E^2_{2, 1}(a, b) = E^1_{2, 1}(a, b)/d^1(E^1_{3, 1}(a, b))$. We have the expression
$$
E^1_{2, 1}(a, b) =
\bigoplus_{\stackrel{a < x < y < b}{\ell(\langle a, x, y, b \rangle) = \ell}}
\Z \langle a, x, y, b \rangle.
$$
Let $\langle a, x, y, b \rangle$ be a base element of $E^1_{2, 1}(a, b)$. From $\ell > d(a, b)$, it follows that $a < x < y < b$, $a \not< y \not< b$ and $a \not< x \not< b$, namely $\langle a, x, y, b \rangle$ is a $4$-cut. Thus, we have a point $z \in X$ such as in Lemma \ref{lem:first_preliminary_to_kill_4_cut}. From $x < y < z < b$, we get a chain $\langle a, x, y, z, b \rangle \in E^1_{3, 1}(a, b)$. We have $a \not< y \not< z$ and $y < z < b$. Hence the chain $\langle a, y, z, b \rangle $ does not belong to $E^1_{2, 1}(a, b)$. By Lemma \ref{lem:second_preliminary_to_kill_4_cut}, we have $a \not< x \not< z$ and $x < z < b$. Thus $\langle a, x, z, b \rangle$ does not belong to $E^1_{2, 1}(a, b)$. As a result, we get
$$
d^1\langle a, x, y, z, b \rangle
= - \langle a, x, y, b \rangle.
$$
This implies $E^1_{2, 1}(a, b) = d^1(E^1_{3, 1}(a, b))$ and $E^2_{2, 1}(a, b) = 0$. 
\end{proof}

\begin{lem} \label{lem:preliminary_to_the_vanishing_E2_12}
Let $(X, d)$ be a geodesic space. Suppose that we have
\begin{itemize}
\item
$a, \varphi, b \in X$ such that $a \neq \varphi \neq b$ and $a \not< \varphi \not< b$,

\item
$x_i \in X$ such that $a < x_i < \varphi$ and $x_i \not< \varphi \not< b$ for $i = 1, \cdots, m$,

\item
$y_j \in X$ such that $a \not< \varphi \not< y_j$ and $\varphi < y_j < b$ for $j = 1, \cdots, n$,
\end{itemize}
Then there exist $\overline{x}, \overline{y} \in X$ such that 
\begin{align*}
&a < \overline{x} < \varphi, &
&\overline{x} \not< \varphi \not< b, &
&\overline{x} \not< \varphi \not< y_j, \quad (j = 1, \cdots, n) \\
&a \not< \varphi \not< \overline{y}, &
&\varphi < \overline{y} < b, &
&x_i \not< \varphi \not< \overline{y}, \quad (i = 1, \cdots, m) \\
&\overline{x} \not< \varphi \not< \overline{y}.
\end{align*}
\end{lem}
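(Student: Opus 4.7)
The plan is to build $\overline{x}$ as a point on a geodesic from $a$ to $\varphi$ taken very close to $a$, and then $\overline{y}$ as a point on a geodesic from $\varphi$ to $b$ taken very close to $b$. Each non-betweenness requirement will collapse to a strict inequality that survives a sufficiently small perturbation away from the endpoint, and all of these strict gaps are positive by hypothesis.

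First I would fix a geodesic $f : [0, d(a,\varphi)] \to X$ joining $a$ to $\varphi$, which exists because $X$ is geodesic and $a \neq \varphi$, and set $\overline{x} = f(t)$ for some $t \in (0, d(a,\varphi))$. The geodesic property immediately yields $a < \overline{x} < \varphi$, $d(a,\overline{x}) = t$, and $d(\overline{x},\varphi) = d(a,\varphi) - t$. For any $w \in X$ the triangle inequality gives $d(\overline{x}, w) \le t + d(a, w)$, hence
$$
d(\overline{x},\varphi) + d(\varphi, w) - d(\overline{x}, w)
\ge \bigl( d(a,\varphi) + d(\varphi, w) - d(a, w) \bigr) - 2t.
$$
The hypotheses $a \not< \varphi \not< b$ and $a \not< \varphi \not< y_j$ say that the parenthesised gap is strictly positive when $w = b$ and when $w = y_j$; choosing $t$ smaller than half the minimum of these finitely many positive gaps secures simultaneously $\overline{x} \not< \varphi \not< b$ and $\overline{x} \not< \varphi \not< y_j$ for every $j$. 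In particular, the newly established relation $\overline{x} \not< \varphi \not< b$ supplies a fresh positive gap
$$
\xi = d(\overline{x},\varphi) + d(\varphi, b) - d(\overline{x}, b) > 0,
$$
which will be used in the second step.

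Next I would fix a geodesic $g : [0, d(\varphi, b)] \to X$ joining $\varphi$ to $b$ and set $\overline{y} = g(d(\varphi, b) - \sigma)$ for some $\sigma \in (0, d(\varphi, b))$, so that $\varphi < \overline{y} < b$ and $d(\overline{y}, b) = \sigma$. By the symmetric triangle-inequality estimate $d(w, \overline{y}) \le d(w, b) + \sigma$, we obtain
$$
d(w,\varphi) + d(\varphi, \overline{y}) - d(w, \overline{y})
\ge \bigl( d(w,\varphi) + d(\varphi, b) - d(w, b) \bigr) - 2\sigma
$$
for every $w \in X$. Applying this in turn with $w = a$, $w = x_i$, and $w = \overline{x}$, the gaps inside the parentheses are strictly positive by the hypotheses $a \not< \varphi \not< b$, $x_i \not< \varphi \not< b$, and by the relation $\overline{x} \not< \varphi \not< b$ secured in the first step. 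Choosing $\sigma$ smaller than half the minimum of these finitely many positive quantities (including $\xi/2$) yields simultaneously $a \not< \varphi \not< \overline{y}$, $x_i \not< \varphi \not< \overline{y}$ for every $i$, and $\overline{x} \not< \varphi \not< \overline{y}$, completing the required list.

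The only subtlety worth flagging is the dependence order: $\overline{y}$ must be constructed after $\overline{x}$, because the last condition $\overline{x} \not< \varphi \not< \overline{y}$ requires the strictly positive gap $\xi$ furnished by $\overline{x}$. Beyond this, there is no genuine obstacle: everything reduces to triangle-inequality estimates, and a finite family of strictly positive real numbers has a positive minimum, so the choices of $t$ and $\sigma$ exist.
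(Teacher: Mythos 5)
Your proof is correct and follows essentially the same approach as the paper: both place $\overline{x}$ on a geodesic near $a$ and $\overline{y}$ on a geodesic near $b$, exploiting that the finitely many strict gap conditions are open. The only cosmetic difference is that the paper packages the openness into a continuity argument for a product of non-negative gap functions evaluated jointly at $(a,b)$, whereas you make the same estimate quantitative via the triangle inequality and choose $\overline{x}$ and $\overline{y}$ sequentially.
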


\begin{proof}
We define continuous functions in $x, y \in X$ as follows:
\begin{align*}
F_j(x) &= 
\left\{
\begin{array}{ll}
d(x, \varphi) + d(\varphi, b) - d(x, b), & (j = 0) \\
d(x, \varphi) + d(\varphi, y_j) - d(x, y_j), & (j = 1, \cdots, n) 
\end{array}
\right.
\\
G_i(x) &=
\left\{
\begin{array}{ll}
d(a, \varphi) + d(\varphi, y) - d(a, y), & (i = 0) \\
d(x_i, \varphi) + d(\varphi, y) - d(x_i, y), & (i = 1, \cdots, m) 
\end{array}
\right. 
\\
H(x, y) &= d(x, \varphi) + d(\varphi, y) - d(x, y).
\end{align*}
Since $a \not< \varphi \not< b$ and $a \not< \varphi \not< y_j$, we have $F_j(a) > 0$. Similarly $a \not< \varphi \not< b$ and $x_i \not< \varphi \not< b$ imply $G_i(b) > 0$. From $a \not< \varphi \not< b$, it also follows that $H(a, b) > 0$. As the product of these functions, we define a continuous function
$$
I(x, y) = 
\bigg(\prod_{j = 0}^n F_j(x)\bigg)
\bigg(\prod_{i = 0}^m G_i(x)\bigg)
H(x, y).
$$
Now, let $f : [0, d(a, \varphi)] \to X$ be a geodesic joining $a$ to $\varphi$, and $g : [0, d(b, \varphi)] \to X$ a geodesic joining $b$ to $\varphi$. The composite function $I(f(t), g(u))$ is continuous in $t$ and $u$, and $I(0, 0) > 0$. Hence there are $t_0 \in (0, d(a, \varphi))$ and $u_0 \in (0, d(b, \varphi))$ such that $I(f(t_0), g(u_0)) > 0$. Now, $\overline{x} = f(t_0)$ and $\overline{y} = g(t_0)$ have the required properties.
\end{proof}

\begin{prop} \label{prop:vanishing_E2_12}
Let $(X, d)$ be a geodesic space, $\ell$ a real number, and $E^r_{p, q}(a, b)$ the smoothness spectral sequence for the direct summand $H^\ell_*(a, b)$ of the magnitude homology with $a, b \in X$. Then $E^2_{1, 2}(a, b) = 0$.
\end{prop}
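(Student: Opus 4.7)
My approach would be to use the frame decomposition $E^1_{1,2}(a,b) = \bigoplus_\varphi C^\ell_3(\langle a,\varphi,b\rangle)$ (and similarly in degrees $0$ and $2$), where $\varphi \in X$ ranges over points with $a \neq \varphi \neq b$, $a \not< \varphi \not< b$, and $d(a,\varphi)+d(\varphi,b) = \ell$. Since $d^1$ preserves this decomposition, it suffices to show that the relevant subquotient of $(C^\ell_{*+2}(\langle a,\varphi,b\rangle), d^1)$ vanishes at $p=1$ for every such $\varphi$.

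Fix a frame $\varphi$. The chains in $C^\ell_3(\langle a,\varphi,b\rangle)$ are the two families $\langle a, x, \varphi, b\rangle$ (with $a < x < \varphi$, $x \not< \varphi \not< b$; call these \emph{type A}) and $\langle a, \varphi, y, b\rangle$ (with $a \not< \varphi \not< y$, $\varphi < y < b$; call these \emph{type B}). A direct computation of $d^1$ on the single smooth point of each gives
$$
d^1 \langle a, x, \varphi, b\rangle = -\langle a, \varphi, b\rangle, \qquad
d^1 \langle a, \varphi, y, b\rangle = +\langle a, \varphi, b\rangle,
$$
so an arbitrary cycle $\gamma = \sum_i n_i \langle a, x_i, \varphi, b\rangle + \sum_j m_j \langle a, \varphi, y_j, b\rangle$ in $E^1_{1,2}(a,b)$ satisfies $\sum_i n_i = \sum_j m_j$.

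Next I would apply Lemma \ref{lem:preliminary_to_the_vanishing_E2_12} to the (finite) collections $\{x_i\}$ and $\{y_j\}$ appearing in $\gamma$ to obtain points $\overline{x}, \overline{y} \in X$ meeting the full list of betweenness conditions of that lemma. These conditions are precisely what is needed to guarantee that the three ``mixed'' $4$-chains
$$
\langle a, x_i, \varphi, \overline{y}, b\rangle, \qquad
\langle a, \overline{x}, \varphi, y_j, b\rangle, \qquad
\langle a, \overline{x}, \varphi, \overline{y}, b\rangle
$$
lie in $C^\ell_4(\langle a,\varphi,b\rangle)$ with exactly two smooth points (the positions adjacent to $\varphi$ remain singular by design). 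Computing $d^1$ on each—using that after removing one smooth endpoint the remaining point stays smooth while $\varphi$ stays singular (since $a \not< \varphi \not< b$ and the lemma guarantees the other needed non-betweenness)—yields three relations in $E^2_{1,2}(a,b)$:
$$
[\langle a, x_i, \varphi, b\rangle] = -[\langle a, \varphi, \overline{y}, b\rangle], \quad
[\langle a, \varphi, y_j, b\rangle] = -[\langle a, \overline{x}, \varphi, b\rangle], \quad
[\langle a, \varphi, \overline{y}, b\rangle] = -[\langle a, \overline{x}, \varphi, b\rangle].
$$
Setting $C = [\langle a, \overline{x}, \varphi, b\rangle]$, every type A class equals $C$ and every type B class equals $-C$, so $[\gamma] = (\sum_i n_i - \sum_j m_j) C = 0$.

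The main obstacle will be the bookkeeping in the $d^1$ calculations: one must verify that each of the mixed $4$-chains genuinely sits in $E^1_{2,2}$ (i.e.\ has $\sigma = 2$) and that each of the two summands produced by $d^1$ has $\sigma = 1$ (so that no unwanted terms appear from points that become smooth after a deletion). This is where the full strength of Lemma \ref{lem:preliminary_to_the_vanishing_E2_12}, including the auxiliary conditions $\overline{x}\not<\varphi\not<\overline{y}$, $\overline{x}\not<\varphi\not<b$ and the crossed conditions $x_i \not< \varphi \not< \overline{y}$, $\overline{x} \not< \varphi \not< y_j$, is used. Once these checks are in place, the argument reduces to the clean algebraic computation above.
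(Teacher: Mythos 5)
Your proposal is correct and follows essentially the same route as the paper: the frame decomposition, the application of Lemma \ref{lem:preliminary_to_the_vanishing_E2_12} to the finitely many points in a cycle, and the three mixed $4$-chains $\langle a, x_i, \varphi, \overline{y}, b\rangle$, $\langle a, \overline{x}, \varphi, y_j, b\rangle$, $\langle a, \overline{x}, \varphi, \overline{y}, b\rangle$ with exactly the boundary relations the paper computes. Your final step is in fact slightly more explicit than the paper's (which says only that the chains are homologous to each other and that the cycle condition then forces $[\zeta]=0$): you track the signs to see that type A classes equal $C$ and type B classes equal $-C$, so the cycle condition $\sum_i n_i = \sum_j m_j$ kills the class.
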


\begin{proof}
We apply the direct sum decomposition of the chain complex
$$
E^1_{p, 2}(a, b) =
\bigoplus_{\langle a, \varphi, b \rangle \in \widehat{P}_q} C^\ell_{p+2}(
\langle a, \varphi, b \rangle),
$$
where $C^\ell_{p + 2}(\langle a, \varphi, b \rangle)$ is the free abelian group generated by $(p+2)$-chains 
$$
\langle a, x_1, \cdots, x_m, \varphi, y_1, \cdots, y_n, b \rangle
\in P^\ell_{p+2}(\langle a, \varphi, b \rangle)
$$ 
of length $\ell$ in which $x_1, \cdots, x_m$ and $y_1, \cdots, y_n$ are smooth, whereas $a, \varphi, b$ are singular. ($m$ and $n$ satisfy $m + n = p$.) If $a = \varphi$ or $\varphi = b$, then $C^\ell_3(\langle a, \varphi, b \rangle) = 0$. Actually, $C^\ell_3(\langle a, \varphi, b \rangle)$ is generated by proper $3$-chains of the forms
\begin{align*}
&\langle a, x, \varphi, b \rangle, &
&\langle a, \varphi, y, b \rangle,
\end{align*}
in which $x$ and $y$ are smooth. If $a = \varphi = b$, then there are no proper chains of the above forms. If $a = \varphi \neq b$, then $C^\ell_3(\langle a, \varphi, b \rangle)$ is generated by proper chains of the form $\langle a, x, \varphi, b \rangle$ in which $x$ is smooth. However, $x$ cannot be smooth, because $a = \varphi$. The same argument applies to the case that $a \neq \varphi = b$, and we find $C^\ell_3(\langle a, \varphi, b \rangle) = 0$ whenever $a = \varphi$ or $\varphi = b$.

Henceforth we assume $a \neq \varphi \neq b$. Under this assumption, if $d(a, \varphi) + d(\varphi, b) = d(a, b)$, then $C^\ell_3(\langle a, \varphi, b \rangle) = 0$. This is because there are no proper $3$-chains of the forms $\langle a, x, \varphi, b \rangle$ or $\langle a, \varphi, y, b \rangle$ in which $x, y$ are smooth but $\varphi$ is singular. Hence we also assume $d(a, \varphi) + d(\varphi, b) > d(a, b)$. Under these two assumptions, if $\ell \neq d(a, \varphi) + d(\varphi, b)$, then $C^\ell_3(\langle a, \varphi, b \rangle) = 0$. To summarize, we assume that $a \neq \varphi \neq b$ and $\ell = d(a, \varphi) + d(\varphi, b) > d(a, b)$ in the following. In this case, we can describe $C^\ell_{p + 2}(\langle a, \varphi, b \rangle)$ for $p = 0, 1$ as follows:
\begin{align*}
C^\ell_2(\langle a, \varphi, b \rangle)
&= 
\Z \langle a, \varphi, b \rangle, \\
C^\ell_3(\langle a, \varphi, b \rangle)
&=
\bigoplus_{\stackrel{a < x < \varphi}{x \not< \varphi \not< b}}
\Z \langle a, x, \varphi, b \rangle
\oplus
\bigoplus_{\stackrel{a \not< \varphi \not< y}{\varphi < y < b}}
\Z \langle a, \varphi, y, b \rangle.
\end{align*}

Now, given an element $\zeta \in C^\ell_3(\langle a, \varphi, b \rangle)$, we can express it as follows
$$
\zeta = \sum_{i = 1}^m k_i \langle a, x_i, \varphi, b \rangle
+ \sum_{j = 1}^n \ell_j \langle a, \varphi, y_j, b \rangle,
$$
where $x_i$ and $y_j$ are smooth. The thing we will prove is that the chains in $\zeta$ are homologous to each other. Once this is proved, the cycle condition $d^1\zeta = 0$ will imply that $\zeta$ is a boundary, so that $H^\ell_3(\langle a, \varphi, b \rangle) = 0$ and $E^2_{1, 2}(a, b) = 0$. Applying Lemma \ref{lem:preliminary_to_the_vanishing_E2_12} to the points $a, \varphi, b, x_i$ and $y_j$ in the chains constituting $\zeta$, we have $\overline{x}$ and $\overline{y}$. From the properties of $\overline{x}$ and $\overline{y}$, we have the following chains
$$
\langle a, x_i, \varphi, \overline{y}, b \rangle, \
\langle a, \overline{x}, \varphi, y_j, b \rangle, \
\langle a, \overline{x}, \varphi, \overline{y}, b \rangle \
\in C^\ell_4(\langle a, \varphi, b \rangle).
$$
Their boundaries are
\begin{align*}
d^1\langle a, x_i, \varphi, \overline{y}, b \rangle
&= - \langle a, \varphi, \overline{y}, b \rangle 
- \langle a, x_i, \varphi, b \rangle, \\
d^1\langle a, \overline{x}, \varphi, y_j, b \rangle
&= - \langle a, \varphi, y_j, b \rangle
- \langle a, \overline{x}, \varphi, b \rangle, \\
d^1\langle a, \overline{x}, \varphi, \overline{y}, b \rangle
&= - \langle a, \varphi, \overline{y}, b \rangle
- \langle a, \overline{x}, \varphi, b \rangle. 
\end{align*}
Hence all the chains in $\zeta$ are homologous to $\langle a, \varphi, \overline{y}, b \rangle$.
\end{proof}

Based on the vanishing results so far, we get the following description of the third magnitude homology of a geodesic space.

\begin{thm} \label{thm:third_homology_on_geodesic_space}
Let $(X, d)$ be a geodesic space, and $\ell$ a real number. For any $a, b \in X$, there is an isomorphism 
$$
H^\ell_3(a, b) \cong
\left\{
\begin{array}{ll}
H_1(A(a, b)), & (\ell = d(a, b)) \\
0. & (\ell \neq d(a, b))
\end{array}
\right.
$$
\end{thm}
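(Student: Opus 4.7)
My plan is to split the theorem into three cases according to the sign of $\ell - d(a,b)$, and treat each case by invoking results already at hand.

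First, if $\ell < d(a,b)$ then every $3$-chain $\langle a, x_1, x_2, b\rangle$ has length at least $d(a,b) > \ell$, so $C^\ell_3(a,b) = 0$ and the claim is trivial. If $\ell = d(a,b)$, I would apply Proposition \ref{prop:length_is_the_lower_bound} with $n = 3$ to obtain $H^\ell_3(a,b) \cong \overline{H}_1(A(a,b))$; since reduced and unreduced simplicial homology agree in positive degrees, this is $H_1(A(a,b))$, as required.

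The substantive case is $\ell > d(a,b)$, where the theorem predicts $H^\ell_3(a,b) = 0$. Here I would bring in the smoothness spectral sequence by using the two short exact sequences
$$
0 \to F_1H^\ell_3(a, b) \to H^\ell_3(a, b) \to E^\infty_{2, 1}(a, b) \to 0,
$$
$$
0 \to E^\infty_{0, 3}(a, b) \to F_1H^\ell_3(a, b) \to E^\infty_{1, 2}(a, b) \to 0
$$
recorded at the end of the preliminary section, and show that each of the three $E^\infty$-pieces is zero. The vanishings of $E^\infty_{2,1}(a,b)$ and $E^\infty_{1,2}(a,b)$ are immediate from Propositions \ref{prop:vanishing_E2_21_ell_general} and \ref{prop:vanishing_E2_12}, respectively, since $E^\infty$ is a subquotient of $E^2$. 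For $E^\infty_{0,3}(a,b)$, I would appeal to the result of \cite{G} mentioned after Lemma \ref{lem:a_strong_Menger_convexity}: the conclusion of that lemma, valid for any geodesic space, forces $E^2_{0,q}(a,b) = 0$ for all $q \ge 2$, and in particular $E^\infty_{0,3}(a,b) = 0$. Feeding these three vanishings into the two short exact sequences gives $F_1H^\ell_3(a,b) = 0$ and hence $H^\ell_3(a,b) = 0$.

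The main obstacle is the spectral-sequence input itself, specifically Propositions \ref{prop:vanishing_E2_21_ell_general} and \ref{prop:vanishing_E2_12}. Both rely on manufacturing auxiliary points via the intermediate value theorem along geodesics: Proposition \ref{prop:vanishing_E2_21_ell_general} promotes an arbitrary $4$-cut $\langle a,x,y,b\rangle$ to a $4$-chain whose $d^1$-boundary realizes it, while Proposition \ref{prop:vanishing_E2_12} produces common targets $\overline{x},\overline{y}$ to show all the chains making up a $C^\ell_3(\langle a,\varphi,b\rangle)$-cycle are mutually homologous. Granting those propositions and the cited vanishing $E^2_{0,q}=0$ from \cite{G}, the present theorem reduces to straightforward bookkeeping in the spectral sequence.
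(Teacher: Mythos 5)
Your proposal is correct and follows essentially the same route as the paper: Proposition \ref{prop:length_is_the_lower_bound} for $\ell = d(a,b)$, and for $\ell > d(a,b)$ the combination of $E^2_{0,3}(a,b)=0$ (from \cite{G} via Lemma \ref{lem:a_strong_Menger_convexity}), $E^2_{1,2}(a,b)=0$ (Proposition \ref{prop:vanishing_E2_12}) and $E^2_{2,1}(a,b)=0$ (Proposition \ref{prop:vanishing_E2_21_ell_general}) fed into the two short exact sequences. No gaps.
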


\begin{proof}
It is clear that $H^\ell_3(a, b) = 0$ for $\ell < d(a, b)$. The case of $\ell = d(a, b)$ is already seen in  Proposition \ref{prop:length_is_the_lower_bound}. Therefore we investigate the case of $\ell > d(a, b)$ by using the smoothness spectral sequence. Notice that $E^1_{p, 0}(a, b) = 0$ for all $p$ and $\ell > 0$ by construction. Notice also that $E^2_{0, q}(a, b) = 0$ for $q \ge 2$ by a vanishing result in \cite{G} and Lemma \ref{lem:a_strong_Menger_convexity}. Thus, by $E^2_{0, 3}(a, b) = 0$ together with $E^2_{1, 2}(a, b) = 0$ from Proposition \ref{prop:vanishing_E2_12}, we get $H^\ell_3(a, b) \cong E^\infty_{2, 1}(a, b)$. This is trivial, because $E^2_{2, 1}(a, b) = 0$ if $\ell > d(a, b)$ as established in Proposition \ref{prop:vanishing_E2_21_ell_general}.
\end{proof}

\begin{cor} \label{cor:third_homology_on_geodesic_space_simple_version}
Let $(X, d)$ be a geodesic space, and $a, b \in X$ distinct points. If there is only one geodesic joining $a$ to $b$, then $H^\ell_3(a, b) = 0$ for any $\ell$.
\end{cor}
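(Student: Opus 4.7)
The plan is to reduce to the case $\ell = d(a,b)$ via Theorem \ref{thm:third_homology_on_geodesic_space}, and then show that the unique-geodesic hypothesis forces the vertex set of $A(a,b)$ to be totally ordered, so that Lemma \ref{lem:vanishing_of_homology_in_totally_ordered_case} applies.

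First I would invoke Theorem \ref{thm:third_homology_on_geodesic_space}: it already gives $H^\ell_3(a,b) = 0$ whenever $\ell \neq d(a,b)$, so the only remaining case is $\ell = d(a,b)$, where we must show $H_1(A(a,b)) = 0$.

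The key observation is that under the unique-geodesic assumption, every vertex of $A(a,b)$ must lie on the single geodesic $f : [0, d(a,b)] \to X$ joining $a$ to $b$. Indeed, a vertex $x \in A(a,b)$ is by definition a point satisfying $a < x < b$, and Lemma \ref{lem:geodesic_from_x_y_z} provides a geodesic from $a$ to $b$ passing through $x$. By uniqueness this geodesic is $f$, so $x = f(s)$ for a unique $s \in (0, d(a,b))$. Consequently, given two vertices $x = f(s)$ and $y = f(t)$ with $s < t$, the geodesic property yields
$$
d(a,x) + d(x,y) + d(y,b) = s + (t-s) + (d(a,b) - t) = d(a,b),
$$
so $a < x < y < b$ and $\{x,y\}$ is a $1$-simplex of $A(a,b)$ oriented by $x \prec y$. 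Thus the vertex set of $A(a,b)$ is totally ordered by the real parameter along $f$.

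Having established the total order, Lemma \ref{lem:vanishing_of_homology_in_totally_ordered_case} immediately gives $\overline{H}_n(A(a,b)) = 0$ for all $n \in \Z$, hence in particular $H_1(A(a,b)) = 0$, which completes the proof. I do not anticipate any serious obstacle here; the only subtlety is confirming that uniqueness of the geodesic forces $x$ to lie on $f$ (rather than merely on some geodesic), which is immediate from Lemma \ref{lem:geodesic_from_x_y_z} combined with the hypothesis.
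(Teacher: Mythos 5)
Your proposal is correct and follows essentially the same route as the paper: reduce to $\ell = d(a,b)$, use Lemma \ref{lem:geodesic_from_x_y_z} plus uniqueness to place every vertex of $A(a,b)$ on the single geodesic $f$, conclude that the vertices are totally ordered, and apply Lemma \ref{lem:vanishing_of_homology_in_totally_ordered_case}. Your write-up is in fact slightly more explicit than the paper's in verifying the pairwise ordering of vertices along $f$.
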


\begin{proof}
It suffices to show $H_1(A(a, b)) = \overline{H}_1(A(a, b)) = 0$ under the assumption $\ell = d(a, b)$. By the hypothesis and a simple generalization of Lemma \ref{lem:geodesic_from_x_y_z}, all the vertices of any $(p-1)$-simplex $\{ x_1, \cdots, x_p \}$ in $A(a, b)$ lie on the unique geodesic $f$ joining $a$ to $b$. This fact implies that the vertices in $A(a, b)$ are totally ordered. Then we can apply Lemma \ref{lem:vanishing_of_homology_in_totally_ordered_case}.
\end{proof}

To have a generalization of the above corollary, we introduce an assumption, which concerns with a certain non-branching property of a geodesic space.

\begin{assumption} \label{assumption:non_branching}
Let $(X, d)$ be a geodesic space. We assume that, for any $a, b \in X$, if two geodesics $f$ and $g$ joining $a$ to $b$ share a point other than $a$ or $b$ (namely, there exists $t \in (0, d(a, b))$ such that $f(t) = g(t)$), then $f = g$. 
\end{assumption}

Note that the assumption is equivalent to that, for any $a, b \in X$, each equivalence class in $\pi_0(\Geod(a, b))$ is represented by a unique geodesic, and hence we get $\pi_0(\Geod(a, b)) = \Geod(a, b)$. Note also that any connected and complete Riemannian manifold, which is a geodesic space \cite{B-H,P}, fulfils the assumption. This is because a geodesic in this case is locally characterized by an ordinary differential equation, so the uniqueness in the assumption follows from that of a solution to the ordinary differential equation with an initial condition.

\begin{cor} \label{cor:third_homology_on_geodesic_space_general_version}
Let $(X, d)$ be a geodesic space, and $a, b \in X$ distinct points. If Assumption \ref{assumption:non_branching} is satisfied, then $H^\ell_3(a, b) = 0$ for any $\ell$.
\end{cor}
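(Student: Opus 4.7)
The plan is to reduce to $\ell = d(a,b)$ via Theorem \ref{thm:third_homology_on_geodesic_space} and then kill $H_1(A(a,b))$ component by component. Indeed, that theorem already handles $\ell \neq d(a,b)$, leaving only the isomorphism $H^\ell_3(a,b) \cong H_1(A(a,b))$ with $\ell = d(a,b)$. Since unreduced $H_1$ splits over connected components of a simplicial complex, $H_1(A(a,b)) = \bigoplus_C H_1(C)$, and it suffices to prove $H_1(C) = 0$ for every connected component $C$ of $A(a,b)$.

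The key input is that Assumption \ref{assumption:non_branching} pins each component to a single geodesic. For any vertex $x$ of $A(a,b)$, Lemma \ref{lem:geodesic_from_x_y_z} produces a geodesic $f_x \in \Geod(a,b)$ passing through $x$, and the non-branching assumption makes $f_x$ uniquely determined by $x$. If $\{x,y\}$ is a $1$-simplex of $A(a,b)$, then (without loss of generality) $a < x < y < b$, and the generalization of Lemma \ref{lem:geodesic_from_x_y_z} invoked in the proof of Corollary \ref{cor:third_homology_on_geodesic_space_simple_version} yields a single geodesic passing through both $x$ and $y$; uniqueness forces this geodesic to coincide with both $f_x$ and $f_y$, so $f_x = f_y$. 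Propagating along edge paths, all vertices of a connected component $C$ lie on one common geodesic $f$.

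With $C$'s vertices confined to a single geodesic $f$, they inherit a total order from the parameter along $f$, and any finite subset $\{x_1, \ldots, x_p\}$ ordered along $f$ satisfies $a < x_1 < \cdots < x_p < b$, hence is automatically a simplex of $A(a,b)$. So $C$ is the full simplicial complex on its own vertex set, totally ordered. The chain homotopy constructed in Lemma \ref{lem:vanishing_of_homology_in_totally_ordered_case} then applies verbatim to $C$: choose a reference vertex $\overline{x} \in C$, and note that the insertion $[x_0 \cdots x_i \overline{x} x_{i+1} \cdots x_n]$ is automatically a simplex of $C$ because all these vertices lie on $f$. This yields $\overline{H}_n(C) = 0$ for every $n$, and in particular $H_1(C) = 0$. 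Summing over components completes the argument.

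I expect the only real obstacle to be the propagation step, upgrading non-branching from ``there is a unique geodesic through two comparable points'' to ``there is a single geodesic supporting every vertex of a connected component.'' Once this rigidity is in hand, the reduction to a contractible simplicial skeleton and the subsequent vanishing of $H_1$ are essentially formal.
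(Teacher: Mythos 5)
Your proof is correct and follows essentially the same route as the paper: your decomposition of $H_1(A(a,b))$ over connected components is exactly the paper's decomposition into subcomplexes $A(a,b;\xi)$ indexed by $\pi_0(\Geod(a,b)) \cong \pi_0(A(a,b))$, and both arguments finish with the totally-ordered chain homotopy of Lemma \ref{lem:vanishing_of_homology_in_totally_ordered_case}. Your explicit edge-path propagation of the unique supporting geodesic is precisely the rigidity the paper extracts from Assumption \ref{assumption:non_branching}, so there is nothing to fix.
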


This vanishing result will be generalized in \S\ref{sec:higher_magnitude_homology}. But, it would be instructive to give a proof in this particular case, since it contains a prototypical argument.

\begin{proof}
Again, it suffices to show $H_1(A(a, b)) = 0$ under $\ell = d(a, b)$. In general, the homology group $H_n(A(a, b))$ admits a direct sum decomposition by means of the ``path connected component''. To be precise, notice that all the vertices of a $(p-1)$-simplex $\{ x_1, \cdots, x_p \}$ in $A(a, b)$ lie on a geodesic $f$ joining $a$ to $b$. This is a consequence of a simple generalization of Lemma \ref{lem:geodesic_from_x_y_z}. Generally, such a geodesic $f$ is not unique, but its equivalence class $[f] \in \pi_0(\Geod(a, b))$ is uniquely determined by the simplex. Thus, in this case, we say that the geodesic class of the simplex is $[f]$. Let $A(a, b; \xi) \subset A(a, b)$ be the subcomplex consisting of simplices whose geodesic classes are $\xi \in \pi_0(A(a, b)) \cong \pi_0(\Geod(a, b))$. Then the simplicial complex $A(a, b)$ is expressed as the disjoint union 
$$
A(a, b) = \bigsqcup_{\xi \in \pi_0(A(a, b))} A(a, b; \xi),
$$
which leads to the direct sum decomposition of the homology group
$$
H_n(A(a, b)) = \bigoplus_{\xi \in \pi_0(\Geod(a, b))} H_n(A(a, b; \xi)).
$$
Now, by the hypothesis of the corollary, each equivalence class $\xi \in \pi_0(\Geod(a, b))$ is represented by a unique geodesic $f$. This means that all the vertices in $A(a, b; \xi)$ lie on $f$. Now, we can apply the argument in the proof of Corollary \ref{cor:third_homology_on_geodesic_space_simple_version} to showing $H_n(A(a, b; \xi)) = 0$ for $n \ge 1$. 
\end{proof}


\subsection{An invariant of the third magnitude homology}
\label{subsec:invariant}

Let $(X, d)$ be a metric space, and $a, b \in X$ two points. If $\ell = d(a, b)$, then $H^\ell_3(a, b)$ is isomorphic to the first homology group of the simplicial complex $A(a, b)$ by Proposition \ref{prop:length_is_the_lower_bound}. In general, a first homology class of a simplicial complex can be represented by a closed \textit{edge path} \cite{Spa}. In the present setup, each edge of an edge path representing an element in $H^\ell_3(a, b)$ corresponds to a $3$-chain $\langle a, x, y, b \rangle$. Here, if $X$ is a geodesic space, then there is a geodesic joining $x$ to $y$. Then, an edge path can be thought of as a geometric path in $X$ consisting of segments which lie on geodesics joining $a$ to $b$. The idea to formulate our invariant of third magnitude homology classes is to consider the intersection number of the geometric (edge) path and a geodesic joining $a$ to $b$. Notice, however, that this naive idea seems not to work when geodesics joining $x$ and $y$ are not unique. This is the reason that we put the following assumption.

\begin{assumption} \label{assumption:for_intersection_number}
Let $(X, d)$ be a metric space, $a, b \in X$ two points, and $\ell = d(a, b) > 0$. We assume that there is a geodesic $f : [0, d(a, b)] \to X$ joining $a$ to $b$. We also assume that, for any $x, y \in X$ such that $a < x < y < b$, there is a unique geodesic $g : [0, d(x, y)] \to X$ joining $x$ to $y$. 
\end{assumption}

This assumption leads to the following description of the geometric intersection of relevant geodesics.

\begin{lem} \label{lem:geometric_intersection}
Under Assumption \ref{assumption:for_intersection_number}, the inverse image $g^{-1}(\mathrm{Im}f) \subset [0, d(x, y)]$ is either the empty set, a point or a closed interval of positive length.
\end{lem}

\begin{proof}
Suppose $g^{-1}(\mathrm{Im}f) \neq \emptyset$. Being the image of a compact space, $\mathrm{Im}f$ is also compact. Hence $g^{-1}(\mathrm{Im}f) \subset [0, d(x, y)]$ is a bounded closed set, and we can make sense of the real numbers
\begin{align*}
t_{\mathrm{min}} 
&=
\min \{ t \in g^{-1}(\mathrm{Im}f) \}, 
&
t_{\mathrm{max}} 
&=
\max \{ t \in g^{-1}(\mathrm{Im}f) \}.
\end{align*}
If $t_{\mathrm{min}} = t_{\mathrm{max}}$, then $g^{-1}(\mathrm{Im}f)$ consists of a point. Suppose $t_{\mathrm{min}} < t_{\mathrm{max}}$. The proof of the lemma will be completed once we could show that $g(t) \in \mathrm{Im}f$ for all $t$ such that $t_{\mathrm{min}} \le t \le t_{\mathrm{max}}$, or equivalently $g([t_{\mathrm{min}}, t_{\mathrm{max}}]) \subset \mathrm{Im}f$. To show this, we notice $g(t_{\mathrm{min}}), g(t_{\mathrm{max}}) \in \mathrm{Im}f$. Because $f$ is a geodesic, we have
\begin{align*}
g(t_{\mathrm{min}}) &= f(d(a, g(t_{\mathrm{min}}))), &
g(t_{\mathrm{max}}) &= f(d(a, g(t_{\mathrm{max}}))).
\end{align*}
Replacing a part of $g$ with a part of $f$, we define a map $g' : [0, d(x, y)] \to X$ by
$$
g'(t)
=
\left\{
\begin{array}{ll}
g(t), & (0 \le t \le t_{\mathrm{min}}) \\
f(t - t_{\mathrm{min}} + d(a, g(t_{\mathrm{min}})), &
(t_{\mathrm{min}} \le t \le t_{\mathrm{max}}) \\
g(t). & (t_{\mathrm{max}} \le t \le d(x, y))
\end{array}
\right.
$$
This is well-defined, since $t_{\mathrm{max}} - t_{\mathrm{min}} = d(g(t_{\mathrm{max}}), g(t_{\mathrm{min}}))$ holds for the geodesic $g$, and the points $a$, $g(t_{\mathrm{min}})$ and $g(t_{\mathrm{max}})$ lie on the geodesic $f$. Furthermore, noting $a < x < y < b$ and applying the same argument as in the proof of Lemma \ref{lem:geodesic_from_x_y_z}, we can prove that $g'$ is a geodesic joining $x$ to $y$. Now, under Assumption \ref{assumption:for_intersection_number}, the geodesic $g'$ agrees with $g$. It follows that 
$$
g([t_{\mathrm{min}}, t_{\mathrm{max}}]) 
= g'([t_{\mathrm{min}}, t_{\mathrm{max}}]) 
= f([d(a, g(t_{\mathrm{min}})), d(a, g(t_{\mathrm{min}}))] 
\subset \mathrm{Im}f,
$$
and the present lemma is established.
\end{proof}

The following lemma shows that a magnitude $3$-chain of our interests can be ``subdivided'' up to boundaries.

\begin{lem} \label{lem:subdivision_of_3_chain}
Let $(X, d)$ be a metric space, and $a, b \in X$ two points. Let $\gamma = \langle a, x, y, b \rangle \in C^\ell_3(a, b)$ be a $3$-chain of length $\ell = d(a, b)$, and $g : [0, d(x, y)] \to X$ a geodesic joining $x$ to $y$. For any positive integer $n$, we choose $t_1, \cdots, t_n \in (0, d(x, y))$ such that $0 < t_1 < \cdots < t_n < d(x, y)$, and put $x_0 = x$, $x_{n+1} = y$, and $x_i = g(t_i)$ for $i = 1, \cdots, n$. Then $\langle a, x_i, x_{i+1}, x_n , b \rangle \in C^\ell_4(a, b)$ for $i = 0, \ldots, n$, and 
$$
\langle a, x_0, x_{n+1}, b \rangle
= \sum_{i = 0}^n \langle a, x_i, x_{i+1}, b \rangle
+  \sum_{i = 0}^{n-1} \partial \langle a, x_i, x_{i+1}, x_{n+1}, b \rangle.
$$
\end{lem}

\begin{proof}
The verification is straightforward.
\end{proof}

To make sense of the idea of ``intersection numbers'' mentioned at the begging of this subsection, we introduce a notion of regularity to $3$-chains.

\begin{dfn} \label{dfn:regular_3_chain}
Under Assumption \ref{assumption:for_intersection_number}, we say that a $3$-chain $\langle a, x, y, b \rangle \in C^\ell_3(a, b)$ is \textit{regular with respect to $f$} or \textit{$f$-regular} if the unique geodesic $g$ joining $x$ to $y$ satisfies either of the following:
\begin{enumerate}
\item[(i)]
The geodesic $g$ intersects with $f$ only at $y$. That is, $g([0, d(x, y))) \cap \mathrm{Im}f = \emptyset$ and $y = g(d(x, y)) \in \mathrm{Im}f$.

\item[(ii)]
The geodesic $g$ intersects with $f$ only at $x$. That is, $g((0, d(x, y)]) \cap \mathrm{Im}f = \emptyset$ and $x = g(0) \in \mathrm{Im}f$.

\item[(iii)]
The geodesic $g$ lies on $f$. That is $\mathrm{Im}g \subset \mathrm{Im}f$.

\item[(iv)]
The geodesic $g$ does not intersect $f$. That is $\mathrm{Im}g \cap \mathrm{Im}f = \emptyset$.

\end{enumerate}
\end{dfn}

\begin{lem} \label{lem:existence_of_reguar_chain}
Under Assumption \ref{assumption:for_intersection_number}, let $\gamma = \langle a, x, y, b \rangle \in C^\ell_3(a, b)$ be a $3$-chain. Then there exists an element $\beta \in C^\ell_4(a, b)$ such that 
$$
\gamma - \partial \beta
= \sum_{i = 0}^n \langle a, x_i, x_{i+1}, b \rangle
\in C^\ell_3(a, b)
$$ 
where $x_0 = x$ and  $x_{n+1} = y$; $x_1, \ldots, x_n$ are on the unique geodesic joining $x$ to $y$;  and $\langle a, x_i, x_{i+1}, b \rangle$ are $f$-regular for $i = 0, \cdots, n$.
\end{lem}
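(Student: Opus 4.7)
The plan is to analyze how the unique geodesic $g : [0, d(x,y)] \to X$ joining $x$ to $y$ (which exists because $a < x < y < b$ and Assumption \ref{assumption:for_intersection_number} applies) meets the reference geodesic $f$, and to subdivide $g$ at the endpoints of the intersection locus. The point is that every intermediate point used in the subdivision will sit on $g$, so the uniqueness of geodesics makes all chains appearing in the final formula easy to control.

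The crucial structural step is to show that the closed set
$$
K \;=\; g^{-1}(\mathrm{Im}\,f) \;\subset\; [0, d(x, y)]
$$
is a (possibly empty) closed interval. Closedness is immediate. For order-convexity, a short distance computation in the style of Lemma \ref{lem:geodesic_from_x_y_z} gives $a < x < g(t) < y < b$ for every $t \in (0, d(x, y))$, hence $d(a, g(t)) = d(a, x) + t$; so whenever $g(t) = f(s)$ one has $s = d(a, x) + t$. For $t_1 < t_2$ in $K$ with $g(t_i) = f(s_i)$ this forces $s_1 < s_2$ and $s_2 - s_1 = t_2 - t_1 = d(g(t_1), g(t_2))$, so $f|_{[s_1, s_2]}$ and $g|_{[t_1, t_2]}$ are both geodesics from $g(t_1)$ to $g(t_2)$. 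Since $a < g(t_1) < g(t_2) < b$, Assumption \ref{assumption:for_intersection_number} forces them to coincide, so $g([t_1, t_2]) \subset \mathrm{Im}\,f$ and hence $[t_1, t_2] \subset K$.

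Writing $K = \emptyset$ or $K = [u, v]$, I would then let $t_1 < \cdots < t_n$ be the elements of $\{u, v\} \cap (0, d(x, y))$ arranged in order, so $n \in \{0, 1, 2\}$, and set $x_0 = x$, $x_{n+1} = y$, and $x_i = g(t_i)$ for $1 \le i \le n$. By Assumption \ref{assumption:for_intersection_number} the unique geodesic from $x_i$ to $x_{i+1}$ is the restriction $g|_{[t_i, t_{i+1}]}$ (with $t_0 = 0$ and $t_{n+1} = d(x, y)$), and the interval structure of $K$ places this restriction into exactly one of the four patterns of Definition \ref{dfn:regular_3_chain}; this gives the $f$-regularity of every $\langle a, x_i, x_{i+1}, b \rangle$.

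For the chain level, I would take
$$
\beta \;=\; \sum_{i=1}^{n} \langle a, x_0, x_i, x_{i+1}, b \rangle \;\in\; C^\ell_4(a, b),
$$
with $\beta = 0$ when $n = 0$. Each summand has length $\ell$ since $x_0, x_i, x_{i+1}$ lie in order on $g$ and $a < x < y < b$; the betweenness relations $a < x_0 < x_i$, $x_0 < x_i < x_{i+1}$ and $x_i < x_{i+1} < b$ follow from the same transitivity computation, so all three boundary terms of $\partial \langle a, x_0, x_i, x_{i+1}, b\rangle$ survive. The $\langle a, x_0, x_i, b \rangle$ contributions then telescope in $i$, leaving
$$
\gamma - \partial \beta \;=\; \sum_{i=0}^{n} \langle a, x_i, x_{i+1}, b\rangle,
$$
as required. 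The hard part is the interval claim for $K$; the rest is bookkeeping, but that step is what converts the global uniqueness hypothesis of Assumption \ref{assumption:for_intersection_number} into a finite subdivision of the geodesic $g$.
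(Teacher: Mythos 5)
Your proof is correct and follows essentially the same strategy as the paper: subdivide the unique geodesic $g$ from $x$ to $y$ at the boundary of $g^{-1}(\mathrm{Im}\,f)$ and correct $\gamma$ by an explicit telescoping element of $C^\ell_4(a,b)$ (your $\beta = \sum_i \langle a, x_0, x_i, x_{i+1}, b\rangle$ is a different but equally valid telescope from the paper's $\sum_i \langle a, x_i, x_{i+1}, x_{n+1}, b\rangle$). The one genuine difference is your treatment of $K = g^{-1}(\mathrm{Im}\,f)$: the paper simply asserts that this closed set is a finite disjoint union of closed intervals, which does not follow from compactness alone, whereas you prove the stronger statement that $K$ is a single (possibly degenerate or empty) closed interval by combining the computation $d(a,g(t)) = d(a,x)+t$ with the uniqueness in Assumption \ref{assumption:for_intersection_number}. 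This both closes a small gap in the paper's argument and shows that $n \le 2$ always suffices.
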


\begin{proof}
The idea of the proof is to ``subdivide'' $\gamma$ by Lemma \ref{lem:subdivision_of_3_chain} so that the resulting $3$-chains are $f$-regular. This idea is realized as follows: Let $g : [0, d(a, b)] \to X$ be the unique geodesic joining $x$ to $y$. If $\mathrm{Im}f \cap \mathrm{Im}g = \emptyset$, then $\gamma$ is already $f$-regular. Henceforth we assume $\mathrm{Im}f \cap \mathrm{Im}g \neq \emptyset$. Then we have $g^{-1}(\mathrm{Im}f) = [t_{\mathrm{min}}, t_{\mathrm{max}}] \subset [0, d(a, b)]$ by Lemma \ref{lem:geometric_intersection}. For a positive integer $n$, we can find $t_1, \ldots, t_n \in (0, d(x, y))$ such that $0 < t_1 < \cdots < t_n < d(x, y)$ and $\{ t_{\mathrm{min}}, t_{\mathrm{max}} \} \subset \{ t_0, t_1, \ldots, t_{n+1} \}$, where $t_0 = 0$ and $t_{n+1} = d(a, b)$. Setting $x_i = g(t_i)$ for $i = 0, 1, \ldots, n+1$, we get $f$-regular chains $\langle a, x_i, x_{i+1}, b \rangle$ for $i = 0, 1, \ldots, n$. Defining $\beta \in C^\ell_4(a, b)$ by $\beta = \sum_{i = 0}^{n-1}\langle a, x_i, x_{i+1}, x_{n+1}, b \rangle$ and applying Lemma \ref{lem:subdivision_of_3_chain}, we complete proof.
\end{proof}

\begin{dfn} \label{dfn:intersection_number}
Under Assumption \ref{assumption:for_intersection_number}, we make the following definitions. 
\begin{itemize}
\item[(a)]
For an $f$-regular $3$-chain $\langle a, x, y, b \rangle \in C^\ell_3(a, b)$, we put
$$
\nu_f(\langle a, x, y, b \rangle)
=
\left\{
\begin{array}{ll}
1, & (\mbox{the case of Definition \ref{dfn:regular_3_chain} (i)}) \\
0. & (\mbox{otherwise})
\end{array}
\right.
$$
If $\gamma = \sum_i k_i \gamma_i \in C^\ell_3(a, b)$ is a linear combination of $f$-regular chains $\gamma_i$, then we put $\nu_f(\gamma) = \sum_i k_i \nu_f(\gamma_i)$.

\item[(b)]
We define a homomorphism $\nu_f : C^\ell_3(a, b)/\partial(C^\ell_4(a, b)) \to \Z$ as follows: Let $\gamma = \langle a, x, y, b \rangle \in C^\ell_3(a, b)$ be a chain. Modulo a boundary, the $3$-chain $\gamma$ agrees with a linear combination 
$$
\eta = \sum_{i = 0}^n \langle a, x_i, x_{i+1}, b \rangle 
\in C^\ell_3(a, b)
$$
of $f$-regular $3$-chains $\langle a, x_i, x_{i+1}, b \rangle$ such that $x_1, \ldots, x_n$ are on the unique geodesic joining $x_0 = x$ to $x_{n+1} = y$, by Lemma \ref{lem:existence_of_reguar_chain}. We then put
$$
\nu_f([\gamma]) = \nu_f(\eta) 
= \sum_{i = 0}^n \nu_f(\langle a, x_i, x_{i+1}, b \rangle).
$$
Extending this definition linearly, we define the homomorphism $\nu_f$.
\end{itemize}
\end{dfn}

\begin{thm} 
Definition \ref{dfn:intersection_number} (b) is well-defined. 
\end{thm}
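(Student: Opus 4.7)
The plan is to show well-definedness by constructing an auxiliary homomorphism $\tilde\nu_f : C^\ell_3(a,b) \to \Z$, defined on \emph{all} 3-chains (not just $f$-regular ones), that (i) agrees with $\nu_f$ on $f$-regular chains and (ii) vanishes on $\partial(C^\ell_4(a,b))$. Once such a $\tilde\nu_f$ is in hand, for any chain $\gamma$ and any $f$-regular representative $\eta = \gamma - \partial\beta$ supplied by Lemma \ref{lem:existence_of_reguar_chain}, we will have $\nu_f(\eta) = \tilde\nu_f(\eta) = \tilde\nu_f(\gamma)$, which depends only on $\gamma$ and not on the choices in Lemma \ref{lem:existence_of_reguar_chain}, yielding the well-definedness of Definition \ref{dfn:intersection_number} (b).

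The construction is purely geometric. For a 3-chain $\langle a, x, y, b \rangle$ let $g_{xy}$ be the unique geodesic from $x$ to $y$ guaranteed by Assumption \ref{assumption:for_intersection_number}. Under the same tacit finiteness used in Lemma \ref{lem:existence_of_reguar_chain} (that $g_{xy}^{-1}(\mathrm{Im}\,f)$ is a finite union of closed intervals $[u_i, v_i]$), define $N(g_{xy})$ to be the number of those components with $u_i > 0$, and set $\tilde\nu_f(\langle a, x, y, b \rangle) = N(g_{xy})$, extending linearly. A direct inspection of Definition \ref{dfn:regular_3_chain} confirms condition (i): case (i) gives the single component $\{d(x,y)\}$ (so $N = 1$), while cases (ii)–(iv) give $N = 0$.

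The key step is the additivity identity
$$
N(g_{x_1 x_3}) = N(g_{x_1 x_2}) + N(g_{x_2 x_3})
\qquad \text{whenever } a < x_1 < x_2 < x_3 < b.
$$
By Assumption \ref{assumption:for_intersection_number} together with a simple generalization of Lemma \ref{lem:geodesic_from_x_y_z}, the unique geodesic $g_{x_1 x_3}$ passes through $x_2$, so it factors as the concatenation of $g_{x_1 x_2}$ with (a reparametrization of) $g_{x_2 x_3}$. Hence $g_{x_1 x_3}^{-1}(\mathrm{Im}\,f) \subset [0, d(x_1, x_3)]$ is the union of $g_{x_1 x_2}^{-1}(\mathrm{Im}\,f) \subset [0, d(x_1, x_2)]$ and the $d(x_1, x_2)$-shift of $g_{x_2 x_3}^{-1}(\mathrm{Im}\,f)$. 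Matching connected components on each side of $d(x_1, x_2)$ establishes additivity. Applying it to
$$
\partial \langle a, x_1, x_2, x_3, b \rangle = -\langle a, x_2, x_3, b \rangle + \langle a, x_1, x_3, b \rangle - \langle a, x_1, x_2, b \rangle
$$
gives $\tilde\nu_f(\partial \langle a, x_1, x_2, x_3, b \rangle) = 0$, i.e.\ condition (ii).

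The main obstacle is the case $x_2 \in \mathrm{Im}\,f$ of the additivity argument: then a connected component of $g_{x_1 x_3}^{-1}(\mathrm{Im}\,f)$ may straddle $d(x_1, x_2)$, and one must carefully verify that this straddling component contributes the same value ($0$ or $1$ according as its left endpoint is $0$ or positive) to both $N(g_{x_1 x_3})$ and $N(g_{x_1 x_2})$, while contributing $0$ to $N(g_{x_2 x_3})$ because after the shift its left endpoint becomes $0$. The remaining components of $g_{x_1 x_3}^{-1}(\mathrm{Im}\,f)$ lie entirely in one of the two halves and are immediately accounted for. The case $x_2 \notin \mathrm{Im}\,f$ is easier because the two halves of the preimage are then disjoint and no component can straddle.
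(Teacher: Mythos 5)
Your proof is correct, and it takes a genuinely different route from the paper's. The paper argues directly at the level of $f$-regular decompositions: it first shows that any two decompositions of $\langle a, x, y, b \rangle$ supplied by Lemma \ref{lem:existence_of_reguar_chain} admit a common refinement by points on the unique geodesic $g_{xy}$, and that inserting a single point $z$ does not change $\nu_f$ (a four-way case analysis on which of $x_m, z, x_{m+1}$ lie in $\mathrm{Im}\,f$); it then obtains vanishing on $\partial \langle a, x, y, z, b \rangle$ by observing that $f$-regular decompositions of $\langle a, x, y, b \rangle$ and $\langle a, y, z, b \rangle$ concatenate to one of $\langle a, x, z, b \rangle$. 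You instead build a single linear functional $\tilde\nu_f$ on all of $C^\ell_3(a, b)$ --- the count of components of $g_{xy}^{-1}(\mathrm{Im}\,f)$ with positive left endpoint --- check that it restricts to $\nu_f$ on $f$-regular chains (your verification of the four cases of Definition \ref{dfn:regular_3_chain} is right), and prove the additivity $N(g_{x_1x_3}) = N(g_{x_1x_2}) + N(g_{x_2x_3})$, which kills boundaries and renders independence of the decomposition automatic. The additivity computation is essentially the paper's case analysis in disguise (the straddling component at $x_2 \in \mathrm{Im}\,f$ plays the role of the cases where an inserted point lies on $f$), but packaging it as a globally defined functional is cleaner and slightly stronger: it handles an arbitrary $f$-regular representative modulo boundary in one stroke, not only those of the consecutive form produced by Lemma \ref{lem:existence_of_reguar_chain}. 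Both arguments rest on the same tacit assumption that $g^{-1}(\mathrm{Im}\,f)$ has finitely many components, which you correctly flag as inherited from the paper's Lemma \ref{lem:existence_of_reguar_chain}.
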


\begin{proof}
First, we need to check that, for $\gamma = \langle a, x, y, b \rangle \in C^\ell_3(a, b)$, the number $\nu_f([\gamma])$ is independent of the choice of a linear combination of $f$-regular $3$-chains
$$
\eta = \sum_{i = 0}^n\langle a, x_i, x_{i+1}, b \rangle
$$
which agrees with $\gamma$ modulo a boundary. Suppose that we have another choice
$$
\eta' = \sum_{i = 0}^{n'}\langle a, x'_i, x'_{i+1}, b \rangle.
$$
According to Definition \ref{dfn:intersection_number} (b), the points $x_i$ and $x'_i$ lie on the unique geodesic $g : [0, d(x, y)] \to X$ joining $x = x_0 = x'_0$ to $y = x_n = x'_{n'}$. If $\mathrm{Im}g \cap \mathrm{Im}f = \emptyset$, then we clearly have $\nu_f(\eta) = 0 = \nu_f(\eta')$. Henceforth we assume $\mathrm{Im}g \cap \mathrm{Im}f \neq \emptyset$, so that $g^{-1}(\mathrm{Im}f) = [t_{\mathrm{min}}, t_{\mathrm{max}}]$ by Lemma \ref{lem:geometric_intersection}. Since $\langle a, x_i, x_{i+1}, b \rangle$ and $\langle a, x'_i, x'_{i+1}, b \rangle$ are $f$-regular, we have $\{ g(t_{\mathrm{min}}), g(t_{\mathrm{max}}) \} \subset \{ x_0, \ldots, x_{n+1} \} \cap \{ x'_0, \ldots, x'_{n'+1} \}$. Now, if $g(t_{\mathrm{min}}) = x = x_0 = x'_0$, then $f$-regular chains in $\eta$ and $\eta'$ are of the types (ii), (iii) or (iv) in Definition \ref{dfn:regular_3_chain}, so that $\nu_f(\eta) = 0 = \nu_f(\eta')$. Otherwise, in each of $\eta$ and $\eta'$, there appears only one $f$-regular chain of type (i) in Definition \ref{dfn:regular_3_chain}, so that $\nu_f(\eta) = 1 = \nu_f(\eta')$. Consequently, $\nu_f([\gamma])$ is independent of the choice of a linear combination $\eta$ of $f$-regular chains as in Definition \ref{dfn:intersection_number} (b).

Next, we prove that $\nu_f([\gamma]) = 0$ if $\gamma = \partial \beta$ for some $\beta \in C^\ell_4(a, b)$. It suffices to consider the case that $\beta = \langle a, x, y, z, b \rangle$. Its boundary is
$$
\partial \beta = 
\langle a, x, z, b \rangle
- (\langle a, x, y, b \rangle + \langle a, y, z, b \rangle),
$$
where $x$, $y$ and $z$ lie on the unique geodesic joining $x$ to $z$. Now, we choose a linear combination $\eta$ of $f$-regular chains which agrees with $\langle a, x, y, b \rangle$ modulo a boundary as in Definition \ref{dfn:intersection_number} (b). We also choose a linear combination $\eta'$ of $f$-regular chains which agrees with $\langle a, y, z, b \rangle$ modulo a boundary as in Definition \ref{dfn:intersection_number} (b). Then $\eta + \eta'$ is a linear combination of $f$-regular chains which agrees with $\langle a, x, z, b \rangle$ modulo a boundary as in Definition \ref{dfn:intersection_number} (b). Therefore 
\begin{align*}
\nu_f(\langle a, x, z, b \rangle) &= \nu_f(\eta + \eta'), &
\nu_f(\langle a, x, y, b \rangle + \langle a, y, z, b \rangle)
= \nu_f(\eta) + \nu_f(\eta').
\end{align*}
Since $\nu_f$ is linear on $f$-regular chains by definition, $\nu_f(\partial \beta) = 0$. 
\end{proof}

Because $H^\ell_3(a, b) \subset C^\ell_3(a, b)/\partial(C^\ell_3(a, b))$, we get:

\begin{cor} \label{cor:intersection_number}
Under Assumption \ref{assumption:for_intersection_number}, there is a homomorphism
$$
\nu_f : H^\ell_3(a, b) \to \Z.
$$
\end{cor}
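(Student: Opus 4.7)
My plan is to observe that all the substantive work has already been done in the theorem immediately preceding the corollary, and only a short formal argument is needed to pass from $C^\ell_3(a,b)/\partial C^\ell_4(a,b)$ to the homology $H^\ell_3(a,b)$.

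The preceding theorem shows that the assignment $[\gamma] \mapsto \nu_f([\gamma])$ gives a well-defined homomorphism
$$
\nu_f : C^\ell_3(a,b)/\partial C^\ell_4(a,b) \to \Z.
$$
So the task is to exhibit a natural homomorphism $H^\ell_3(a,b) \to C^\ell_3(a,b)/\partial C^\ell_4(a,b)$ and compose. Write $Z^\ell_3(a,b) = \mathrm{Ker}[\partial : C^\ell_3(a,b) \to C^\ell_2(a,b)]$. By definition, $H^\ell_3(a,b) = Z^\ell_3(a,b)/\partial C^\ell_4(a,b)$, and the inclusion $Z^\ell_3(a,b) \hookrightarrow C^\ell_3(a,b)$ carries the subgroup $\partial C^\ell_4(a,b) \subset Z^\ell_3(a,b)$ into the subgroup $\partial C^\ell_4(a,b) \subset C^\ell_3(a,b)$, hence induces a homomorphism
$$
\iota : H^\ell_3(a,b) \longrightarrow C^\ell_3(a,b)/\partial C^\ell_4(a,b).
$$
(In fact $\iota$ is injective, since a cycle represents zero in $H^\ell_3(a,b)$ precisely when it is a boundary, but injectivity is not needed for the corollary.)

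The desired homomorphism is then the composite $\nu_f \circ \iota : H^\ell_3(a,b) \to \Z$, which by abuse of notation we continue to denote by $\nu_f$.

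There is really no obstacle of substance here: the genuine difficulties, namely verifying independence of the reduction to an $f$-regular representative and vanishing on boundaries of $4$-chains, were dispatched in the preceding theorem. The remaining step is just the standard observation that any homomorphism defined on $C_n/\partial C_{n+1}$ restricts along the cycle subgroup to a homomorphism on $H_n$.
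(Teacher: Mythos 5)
Your argument is exactly the paper's: the paper dispatches the corollary with the single remark that $H^\ell_3(a,b)$ sits inside $C^\ell_3(a,b)/\partial(C^\ell_4(a,b))$, so the homomorphism $\nu_f$ constructed in the preceding theorem restricts to homology. Your proposal just spells out this restriction along the inclusion of cycles, which is correct and the same approach.
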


As an example, let $X$ consist of the vertices and the edges of the cube with edge length $r$, see Figure \ref{fig:cube}. For any $x, y \in X$, the distance $d(x, y)$ is defined by the length of the shortest path inside $X$ measured by the usual Euclidean distance on each edge of the cube. (Put differently, $(X, d)$ is the \textit{metric graph} \cite{B-H} associated to the box product $P_2^{\Box 3}$ of the two-point path graph $P_2$ whose edges are weighted by $r$.) We number the eight vertices. 
\begin{figure}[htb]
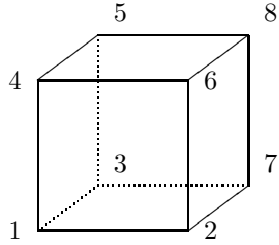

$$
\xygraph{
!{<0cm,0cm>;<1cm,0cm>:<0cm,1cm>::}
!{(0,0)}="a" ([]!{+(-0.3,0)} {1})
!{(2,0)}="b"  ([]!{+(0.3,0)} {2})
!{(0,2)}="c" ([]!{+(-0.3,0)} {4})
!{(2,2)}="d" ([]!{+(0.3,0)} {6})
!{(0.8,2.6)}="e" ([]!{+(0.3,0.3)} {5})
!{(2.8,2.6)}="f" ([]!{+(0.3,0.3)} {8})
!{(0.8,0.6)}="i" ([]!{+(0.3,0.3)} {3})
!{(2.8,0.6)}="j" ([]!{+(0.3,0.3)} {7})
"a"-"b"
"a"-"c"
"b"-"d"
"c"-"d"
"c"-"e"
"d"-"f"
"e"-"f"
"a"-@{.}"i"
"b"-"j"
"e"-@{.}"i"
"f"-"j"
"i"-@{.}"j"
}
$$
\caption{The vertices and the edges of a cube}
\label{fig:cube}
\end{figure}

With the distance, $(X, d)$ is a geodesic space. There are six geodesics joining $a = 1$ to $b = 8$, which are equivalent to each other in the equivalence relation in Definition \ref{dfn:equivalence_geodesic}. Thus, by Corollary \ref{cor:second_homology_on_geodesic_space}, we have $H^\ell_2(a, b) = 0$ for $\ell = d(a, b) = 3r$.

Under the choice $a = 1$, $b = 8$ and $\ell = d(a, b) = 3r$, we have the following magnitude $3$-cycle in $C^\ell_3(a, b)$
$$
\gamma 
= \langle a, 2, 7, b \rangle
- \langle a, 3, 7, b \rangle
+ \langle a, 3, 5, b \rangle
- \langle a, 4, 5, b \rangle
+ \langle a, 4, 6, b \rangle
- \langle a, 2, 6, b \rangle.
$$
To show that this cycle defines a non-trivial homology class in $H^\ell_3(a, b)$, we notice that Assumption \ref{assumption:for_intersection_number} is fulfilled in this case. As a reference geodesic $f$ joining $a$ to $b$, we choose that passes through $2$ and $7$. The cycle $\gamma$ is a linear combination of $f$-regular $3$-chains, and we readily see
$$
\nu_f(\gamma) = 0 - 1 + 0 - 0 + 0 - 0 = -1.
$$
This proves that the homology class $[\gamma] \in H^\ell_3(a, b)$ and also the homomorphism $\nu_f$ are non-trivial.


\section{Higher magnitude homology}
\label{sec:higher_magnitude_homology}

In this section, we prove Theorem \ref{thm:main} (Theorem \ref{thm:complete_description_under_non_branching}) that describes the magnitude homology of a geodesic space under Assumption \ref{assumption:non_branching}. The proof is a computation of the $E^2$-term of the smoothness spectral sequence, and the basic idea is to construct ``chainwise homotopy operators'' in the direct summands of the $E^1$-term. Note that the idea to construct a homotopy operator is ubiquitous, and is applied to other computations of magnitude homology (e.g.\ \cite{Gu}). A homotopy operator on a complex of chains may be constructed by an ``insertion of a point'' to chains. However, under restrictions about length and arrangements of singular points, we are often unable to choose an insertion point ``globally'' on the whole of a chain complex. Instead, we have to choose insertion points ``locally'' for each element of a chain complex to construct ``chainwise homotopy operators''. This makes our argument intricate. We begin with two extreme cases (\S\S\ref{subsec:E2_p1} and \S\S\ref{sec:cycle_associated_to_geodesics}), and then consider the general case (\S\S\ref{subsec:higher_magnitude_homology}).

\subsection{A vanishing result about $E^2_{p, 1}$}
\label{subsec:E2_p1}

We establish here a vanishing $E^2_{p, 1} = 0$ of the smoothness spectral sequence for the magnitude homology $H^\ell_*(X)$ under Assumption \ref{assumption:non_branching}. 

To begin with, we sketch the idea of the proof: In the present case, it seems impossible to construct homomorphisms $H : E^2_{p, 1} \to E^1_{p+1, 1}$ and $H : E^1_{p-1, 1} \to E^1_{p, 1}$ such that $d^1 H + H d^1 = - \mathrm{id}$. Instead, we construct $H\zeta \in E^1_{p+1, 1}$ and $Hd^1\zeta \in E^1_{p, 1}$ for each $\zeta \in E^2_{p, 1}$ such that $d^1H\zeta + Hd^1\zeta = - \zeta$ holds true. The construction of $H\zeta$ and $Hd^1\zeta$ goes roughly as follows: One can express $\zeta \in E^1_{p, 1}(a, b)$ with $a, b \in X$ as
\begin{align*}
\zeta &= \sum_{\gamma} N_\gamma \gamma, &
\gamma &= 
\langle a, x_1^\gamma, x_2^\gamma, x_3^\gamma, \cdots, x_p^\gamma, b \rangle,
\end{align*}
where $\gamma$ runs over a finite set of chains, and $N_\gamma \in \Z$ for each $\gamma$. Up to a boundary, we can assume that the first subchains $\langle a, x_1^\gamma, x_2^\gamma, x_3^\gamma \rangle$ of $\gamma$ are not $4$-cuts (Lemma \ref{lem:replacable_by_chains_starting_with_non_4_cut}). If we express $d^1\zeta = \sum_i (-1)^i d^1_i\zeta$, then the first subchains of $d^1_i\zeta$ are 
\begin{align*}
&\langle a, x_2^\gamma, x_3^\gamma \rangle, &
&\langle a, x_1^\gamma, x_2^\gamma \rangle, &
&\langle a, x_1^\gamma, x_2^\gamma \rangle. 
\end{align*}
We then introduce an equivalence relation to these first subchains (Definition \ref{dfn:triples_and_pairs}). Under Assumption \ref{assumption:non_branching}, a geodesic emanating from $a \in X$ is associated to each equivalence class. Then we can choose a point on each of these geodesics so that the insertion of these points to $\gamma$ and $d^1_i\gamma$ constructs $H\gamma$ and $H d^1_i\gamma$.

\medskip

Now, recall from \S\S\ref{subsec:spectral_sequence} that the $E^1$-term $(E^1_{p, q}, d^1)$ generally admits a direct sum decomposition
$$
E^1_{p, q} = \bigoplus_{\varphi \in \widehat{P}_q(X)} C^\ell_{p+q}(\varphi),
$$
where $\widehat{P}_q(X)$ is the set of (possibly improper) $q$-chains, and $C^\ell_{p+q}(\varphi)$ is the group whose generators are in the set $P^\ell_{p+q}(\varphi)$ of proper $(p+q)$-chains of length $\ell$ whose frames are $\varphi$. Applying this decomposition to the case that $q = 1$ and $\varphi = \langle a, b \rangle$ with any $a, b \in X$, we have
\begin{align*}
E^1_{p, 1}(a, b) &= C^\ell_{p+1}(\langle a, b \rangle), &
E^2_{p, 1}(a, b) &= H^\ell_{p+1}(\langle a, b \rangle),
\end{align*}
where $H^\ell_{p+1}(\langle a, b \rangle)$ is the homology of the subcomplex $C^\ell_{p+*}(\langle a, b \rangle)$ of $E^1_{p, *}$.

\begin{dfn}
Let $(X, d)$ be a metric space, $\ell$ a real number, $a, b \in X$ two points such that $\ell > d(a, b)$, and $E^1_{p, q}(a, b)$ the smoothness spectral sequence for the direct summand of the magnitude homology $H^\ell_*(a, b)$. 
\begin{itemize}
\item
For $p \ge 3$, we define ${}'P^\ell_{p+1}(\langle a, b \rangle) \subset P^\ell_{p+1}(\langle a, b \rangle)$ to be the subset consisting of $(p+1)$-chains $\langle a, x_1, \cdots, x_p, b \rangle$ such that the first subchain $\langle a, x_1, x_2, x_3 \rangle$ is not a $4$-cut. 

\item
We define ${}'E^1_{p, 1}(a, b) \subset E^1_{p, 1}(a, b)$ to be the subgroup generated by chains in ${}'P^\ell_{p+1}(\langle a, b \rangle)$ for $p \ge 3$, and the trivial group otherwise.
\end{itemize}
\end{dfn}

\begin{lem} \label{lem:replacable_by_chains_starting_with_non_4_cut}
Let $(X, d)$ be a geodesic space, $\ell$ a real number, $a, b \in X$ two points such that $\ell > d(a, b)$, and $E^1_{p, q}(a, b)$ the smoothness spectral sequence for the direct summand of the magnitude homology $H^\ell_*(a, b)$. Then any $(p+1)$-chain in $E^1_{p, 1}(a, b)$ is homologous to a linear combination of $(p+1)$-chains in ${}'E^1_{p, 1}(a, b)$.
\end{lem}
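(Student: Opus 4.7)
The plan is to generalize the one-shot cancellation from the proof of Proposition~\ref{prop:vanishing_E2_21_ell_general} (which treats the case $p=2$) to arbitrary $p\ge 3$. Given a basis element $\gamma=\langle a,x_1,\ldots,x_p,b\rangle$ of $E^1_{p,1}(a,b)$ whose first $3$-subchain $\langle a,x_1,x_2,x_3\rangle$ is a $4$-cut (the only case requiring work), I would apply Lemma~\ref{lem:first_preliminary_to_kill_4_cut} to this $4$-cut to obtain a point $z\in X$ with $x_1<x_2<z<x_3$ and $a\not<x_2\not<z$, and then Lemma~\ref{lem:second_preliminary_to_kill_4_cut} to deduce $a\not<x_1\not<z$ and $x_1<z<x_3$. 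These are exactly the incidence relations that made the $p=2$ cancellation work.

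Next I would form the lifted chain
\[
\beta=\langle a,x_1,x_2,z,x_3,x_4,\ldots,x_p,b\rangle
\]
and verify that $\beta\in E^1_{p+1,1}(a,b)$: its length is still $\ell$ since $d(x_2,z)+d(z,x_3)=d(x_2,x_3)$, and smoothness of the new entries $z$ and $x_3$ in $\beta$ follows from $x_2<z<x_3$ together with the triangle-inequality manipulation already used in Lemma~\ref{lem:second_preliminary_to_kill_4_cut}. Computing $d^1\beta$, the mechanism of Proposition~\ref{prop:vanishing_E2_21_ell_general} goes through at the head of the chain: removing $x_1$ makes $x_2$ singular via $a\not<x_2\not<z$, removing $x_2$ makes $x_1$ singular via $a\not<x_1\not<z$, and removing $z$ contributes $-\gamma$. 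Deletions of the later entries $x_3,\ldots,x_p$ contribute residual chains $\eta_j=\langle a,x_1,x_2,z,x_3,\ldots,\widehat{x_j},\ldots,x_p,b\rangle$, each of which still begins with the $4$-cut $\langle a,x_1,x_2,z\rangle$. Hence, modulo $d^1(E^1_{p+1,1}(a,b))$,
\[
\gamma \;\equiv\; \sum_{j=3}^{p}(-1)^{j+1}\,\eta_j.
\]

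I would then iterate the construction on each $\eta_j$, each time inserting a fresh point strictly between $x_2$ and the current fourth entry of the chain and re-running the Proposition~\ref{prop:vanishing_E2_21_ell_general} cancellation. The natural induction parameter is the number of \emph{original} vertices from $\{x_3,\ldots,x_p\}$ still appearing in a residual; at every iteration in which the residual still begins with a $4$-cut, this count strictly decreases in the newly produced residuals. When the count reaches zero, the whole tail of the chain past $x_2$ consists of inserted points, all lying on a single geodesic through $x_1$, so the surviving head $4$-cut can be killed by a final direct application of the Proposition~\ref{prop:vanishing_E2_21_ell_general} argument. \textbf{The main obstacle} is the combinatorial bookkeeping of this recursion: one must verify at every level that the residuals produced are either already in ${}'E^1_{p,1}(a,b)$ or strictly simpler under the induction parameter, and that the signs do not conspire to create chains outside ${}'E^1_{p,1}(a,b)+d^1(E^1_{p+1,1}(a,b))$. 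Setting up this induction cleanly, rather than computing $d^1$ of individual lifted chains ad hoc, is where the delicate work lies.
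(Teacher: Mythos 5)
Your opening move is correct and is a genuinely different one from the paper's: you lift $\gamma$ by inserting a point $z$ \emph{between} $x_2$ and $x_3$, exactly as in Proposition \ref{prop:vanishing_E2_21_ell_general}, and your computation of $d^1\beta$ at the head of the chain is right. But the iteration you then sketch does not terminate, and this is a genuine gap, not just bookkeeping. Take the residual $\eta=\langle a,x_1,x_2,z,x_4,\cdots,x_p,b\rangle$ obtained by deleting $x_3$. Your recipe inserts a fresh point $z'$ with $x_1<x_2<z'<z$ and $a\not<x_2\not<z'$, giving $\beta'=\langle a,x_1,x_2,z',z,x_4,\cdots,x_p,b\rangle$. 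Among the terms of $d^1\beta'$ is the one deleting the \emph{previously inserted} point $z$, namely $\langle a,x_1,x_2,z',x_4,\cdots,x_p,b\rangle$; since $z'<z<x_4$ and both $z'$ and $x_4$ remain smooth after the deletion, this term survives in $d^1$. It still begins with the $4$-cut $\langle a,x_1,x_2,z'\rangle$, and it contains exactly the same set of original vertices from $\{x_3,\ldots,x_p\}$ as $\eta$. So your induction parameter does not strictly decrease on all residuals: each round reproduces a chain of the same shape with $z$ replaced by $z'$, then $z''$, and so on, and the recursion never closes.

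The paper avoids this by inserting the auxiliary point at the \emph{front} instead of in the middle: since $\langle a,x_1,x_2,x_3\rangle$ is a $4$-cut, one has $a\not<x_1\not<x_3$, so by continuity there is $\overline{x}$ on a geodesic from $a$ to $x_1$ with $a<\overline{x}<x_1$ and $\overline{x}\not<x_1\not<x_3$, i.e.\ $\langle\overline{x},x_1,x_2,x_3\rangle$ is still a $4$-cut. Setting $\beta=\langle a,\overline{x},x_1,x_2,x_3,\cdots,x_p,b\rangle$, the deletion of $\overline{x}$ returns $-\gamma$; the deletions of $x_1$ and $x_2$ die in $E^1$ because $\overline{x}\not<x_2\not<x_3$ and $\overline{x}\not<x_1\not<x_3$ make a neighbour singular; and every remaining deletion leaves the head $\langle a,\overline{x},x_1,x_2\rangle$ untouched, which is automatically \emph{not} a $4$-cut because $a<\overline{x}<x_1<x_2$ forces $a<\overline{x}<x_2$. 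Hence $\gamma+d^1\beta\in{}'E^1_{p,1}(a,b)$ in a single step, with no induction at all. If you want to keep your mid-chain insertion, you would have to find a different mechanism for disposing of the residual that deletes the inserted point; as written, the argument does not go through.
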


\begin{proof}
It is enough to consider the case of $p \ge 2$. Let $\gamma = \langle a, x_1, \cdots, x_p, b \rangle \in E^1_{p, 1}(a, b)$ be such that $\langle a, x_1, x_2, x_3 \rangle$ is a $4$-cut, where $x_3 = b$ when $p = 2$. Since $X$ is geodesic, there is a point $\overline{x}$ on the geodesic joining $a$ to $x_1$ such that $\langle \overline{x}, x_1, x_2, x_3 \rangle$ is a $4$-cut. We then have a $(p+2)$-chain $\beta = \langle a, \overline{x}, x_1, x_2, x_3, \cdots, x_p, b \rangle \in E^1_{p+1, 1}(a, b)$. Its boundary turns out to be
$$
d^1\beta = - \gamma + \sum_{i = 4}^p(-1)^i d^1_i \beta,
$$
where $d^1_i \beta \in P^\ell_{p+1}(\langle a, b \rangle)$ is given by
$$
d^1_i \beta =
\left\{
\begin{array}{ll}
\partial_i \beta, & (\sigma(\partial_i\beta) = p) \\
0. & (\sigma(\partial_i\beta) \neq p)
\end{array}
\right.
$$
For $i \ge 4$, if $\sigma(\partial_i\beta) = p$, then the first $3$-subchain of $d^1_i\beta = \partial_i\beta$ is $\langle a, \overline{x}, x_1, x_2 \rangle$, which is not a $4$-cut. Hence $\gamma + d^1\beta \in {}'E^1_{p, 1}(a, b)$. We remark that, if the subchains $\langle x_i, x_{i+1}, x_{i+2}, x_{i+3} \rangle$, ($i = 0, \ldots, p-2$, $x_0 = a$) are all $4$-cuts, then $d^1\beta = - \gamma$. 
\end{proof}

\begin{dfn} \label{dfn:triples_and_pairs}
Let $(X, d)$ be a metric space, $\ell$ a real number, $a, b \in X$ two points such that $\ell > d(a, b)$, and $E^1_{p, q}(a, b)$ the smoothness spectral sequence for the direct summand of the magnitude homology $H^\ell_*(a, b)$. For $p \ge 3$, we express a given element $\zeta \in {}'E^1_{p, 1}(a, b)$ as follows
\begin{align*}
\zeta &=
\sum_{\gamma \in \F(\zeta)} N_\gamma \gamma, 
&
\gamma &= 
\langle a, x^\gamma_1, x^\gamma_2, x^\gamma_3, \cdots, x^\gamma_p, b \rangle,
\end{align*}
where $\F(\zeta) \subset {}'P^\ell_{p+1}(\langle a, b \rangle)$ is a finite set, and $N_\gamma \in \Z$. 
\begin{enumerate}
\item[(a)]
We define $\F_3(\zeta)$ and $\F_2(\zeta)$ by
\begin{align*}
\F_3(\zeta)
&= 
\{ (x^\gamma_1, x^\gamma_2, x^\gamma_3) \in X^3 |\ \gamma \in \F(\zeta) \}, \\
\F_2(\zeta)
&=
\{ (x^\gamma_1, x^\gamma_2), (x^\gamma_2, x^\gamma_3), (x^\gamma_1, x^\gamma_3)
\in X^2 |\ \gamma \in \F(\zeta) \}.
\end{align*}

\item[(b)]
We define maps $\delta_1, \delta_2, \delta_3 : \F_3(\zeta) \to \F_2(\zeta)$ by
\begin{align*}
\delta_1((x_1, x_2, x_3))
&= (x_2, x_3), &
\delta_2((x_1, x_2, x_3))
&= (x_1, x_3), &
\delta_3((x_1, x_2, x_3))
&= (x_1, x_2).
\end{align*}

\item[(c)]
We generate an equivalence relation $\approx$ on $\F_3(\zeta) \sqcup \F_2(\zeta)$ by the following two relations $\sim$:
\begin{itemize}
\item
We have $(x_1, x_2, x_3) \sim (x'_1, x'_2, x'_3)$ for triples in $\F_3(\zeta)$ if 
$$
\delta_i((x_1, x_2, x_3)) 
= \delta_j((x'_1, x'_2, x'_3))
$$
for some pair $(i, j)$ of $i = 1, 2, 3$ and $j = 1, 2, 3$. In other words, $\{ x_1, x_2, x_3 \} \cap \{ x'_1, x'_2, x'_3 \}$ consists of two or three points.

\item
We have $(x_1, x_2) \sim (x'_1, x'_2, x'_3)$ for a pair in $\F_2(\zeta)$ and a triple in $\F_3(\zeta)$ if $(x_1, x_2) = \delta_i((x'_1, x'_2, x'_3))$ for $i = 1, 2$ or $3$. In other words, $\{ x_1, x_2 \} \cap \{ x'_1, x'_2, x'_3 \}$ consists of two points.

\end{itemize}

\end{enumerate}
\end{dfn}

\begin{lem} \label{lem:equivalence_rel_and_unique_geodesic}
Under Assumption \ref{assumption:non_branching}, we consider Definition \ref{dfn:triples_and_pairs}. Suppose that $(x_1, x_2, x_3), (x'_1, x'_2, x'_3) \in \F_3(\zeta)$ are triples such that $(x_1, x_2, x_3) \sim (x'_1, x'_2, x'_3)$ and $d(a, x_1) \le d(a, x_1')$. Then there exists a unique geodesic joining $a$ to $x'_1$ on which $x_1$ lies.
\end{lem}

\begin{proof}
We examine the possible nine cases arising from $(x_1, x_2, x_3) \sim (x'_1, x'_2, x'_3)$. 
\begin{enumerate}
\item[(1-1)]
In the case that $(x_2, x_3) = (x'_2, x'_3)$, since $\langle a, x_1, x_2, x_3 \rangle$ is not a $4$-cut, Lemma \ref{lem:geodesic_from_x_y_z} constructs a geodesic joining $a$ to $x_3 = x'_3$ on which $x_1$ and $x_2 = x'_2$ lie. We also have a geodesic joining $a$ to $x_3 = x'_3$ on which $x'_1$ and $x_2 = x'_2$ lie. But, these geodesics are the same by Assumption \ref{assumption:non_branching}. Thus, by restriction, we get a geodesic joining $a$ to $x'_1$ on which $x_1$ lies. This is unique by Assumption \ref{assumption:non_branching}.

\item[(1-2)]
In the case that $(x_2, x_3) = (x'_1, x'_3)$, we have a geodesic $g$ joining $a$ to $x_3$ on which $x_1$ and $x_2$ lie, and a geodesic $g'$ joining $a$ to $x'_3$ on which $x'_1$ lies. Since $x_2 = x'_1$ and $x_3 = x'_3$, Assumption \ref{assumption:non_branching} leads to $g = g'$. By restriction, we get a geodesic joining $a$ to $x_2 = x'_1$ on which $x_1$ lies. This is unique by Assumption \ref{assumption:non_branching}.

\item[(1-3)]
In the case that $(x_2, x_3) = (x'_1, x'_2)$, the argument in (1-2) applies.

\item[(2-1)]
The case that $(x_1, x_3) = (x'_2, x'_3)$ does not occur under the assumption $d(a, x_1) \le d(a, x'_1)$.

\item[(2-2)]
In the case that $(x_1, x_3) = (x'_1, x'_3)$, a geodesic joining $a$ to $x_1 = x'_1$ turns out to be unique under Assumption \ref{assumption:non_branching}: Let $g$ and $g'$ be geodesics joining $a$ to $x_1 = x'_1$. Since $a < x_1 < x_3$, we can prolong $g$ to get a geodesic $\tilde{g}$ joining $a$ to $x_3$ on which $x_1$ lies. Similarly, prolonging $g'$ we get a geodesic $\tilde{g}'$ joining $a$ to $x'_3$ on which $x'_1$ lies. Because $x_1 = x'_1$ and $x_3 = x'_3$, we have $\tilde{g} = \tilde{g}'$ by Assumption \ref{assumption:non_branching}, and hence $g = g'$.

\item[(2-3)]
In the case that $(x_1, x_3) = (x'_1, x'_2)$, the argument in (2-2) applies.

\item[(3-1)]
The case that $(x_1, x_2) = (x'_2, x'_3)$ does not occur under the assumption $d(a, x_1) \le d(a, x'_1)$.

\item[(3-2)]
In the case that $(x_1, x_2) = (x'_1, x'_3)$, the argument in (2-2) applies.

\item[(3-3)]
In the case that $(x_1, x_2) = (x'_1, x'_2)$, the argument in (2-2) applies.

\end{enumerate}
\end{proof}

\begin{lem} \label{lem:uniqueness_from_non_branching}
Let $(X, d)$ be a geodesic metric space satisfying Assumption \ref{assumption:non_branching}, and $f$ a geodesic joining a point $a \in X$ to another point $x \in X$. For any $t \in (0, d(a, x))$ and any geodesic $g$ joining $a$ to $f(t)$, we have $f|_{[0, t]} = g$.
\end{lem}

\begin{proof}
Since $f$ is a geodesic, we have $a < f(t) < x$. By restricting $f$, we get a geodesic $h$ joining $f(t)$ to $x$. Then the concatenation of $g$ and $h$ at $f(t)$ constructs a path $f'$ from $a$ to $x$. This pass $f'$ is a geodesic joining $a$ to $x$ on which $f(t)$ lies by the argument in the proof of Lemma \ref{lem:geodesic_from_x_y_z}. Now $f = f'$ follows from Assumption \ref{assumption:non_branching}, so that $f|_{[0, t]} = f'|_{[0, t]} = g$.
\end{proof}

\begin{lem} \label{lem:transitivity_of_property_wtr_relation}
Under Assumption \ref{assumption:non_branching}, we consider Definition \ref{dfn:triples_and_pairs}. Suppose that triples $(x_1, x_2, x_3), (x'_1, x'_2, x'_3), (x''_1, x''_2, x''_3) \in \F_3(\zeta)$ satisfy $d(a, x_1) \le d(a, x'_1) \le d(a, x''_1)$ and either of the following:
\begin{itemize}
\item[(i)]
$(x_1, x_2, x_3) \sim (x'_1, x'_2, x'_3) \sim (x''_1, x''_2, x''_3)$,

\item[(ii)]
$(x_1, x_2, x_3) \sim (x''_1, x''_2, x''_3) \sim (x'_1, x'_2, x'_3)$,

\item[(iii)]
$(x'_1, x'_2, x'_3) \sim (x_1, x_2, x_3) \sim (x''_1, x''_2, x''_3)$.

\end{itemize}
Then $x_1$ and $x'_1$ lie on a unique geodesic joining $a$ to $x''_1$. 
\end{lem}

\begin{proof}
In the case of (i), Lemma \ref{lem:equivalence_rel_and_unique_geodesic} provides us a unique geodesic $g$ joining $a$ to $x'_1$ on which $x_1$ lies. We also have a unique geodesic $g'$ joining $a$ to $x''_1$ on which $x'_1$ lies. By Lemma \ref{lem:uniqueness_from_non_branching}, we have $g'|_{[0, d(a, x_1')]} = g$. Hence $x_1$ lies on the unique $g'$. 

In the case of (ii), Lemma \ref{lem:equivalence_rel_and_unique_geodesic} provides us a unique geodesic $g$ joining $a$ to $x''_1$ on which $x_1$ lies, and also a unique geodesic $g'$ joining $a$ to $x''_1$ on which $x'_1$ lies. If $x_1 = x''_1$ or $x''_1 = x'_1$, then $g = g'$ follows from Assumption \ref{assumption:non_branching}. If $x_1 \neq x'_1 \neq x''_1$, then the geodesic $g'$ can be extended to a geodesic $\tilde{g}'$ joining $a$ to a point $z$ different from $x''_1$, according to the examination in the proof of Lemma \ref{lem:equivalence_rel_and_unique_geodesic} with respect to the first relation in (ii). Now Lemma \ref{lem:uniqueness_from_non_branching} implies $g' = \tilde{g}'|_{[0, d(a, x_1'')]} = g$, and both $x_1$ and $x'_1$ lie on this unique geodesic $g' = g$ joining $x$ to $x_1''$.

In the case of (iii), Lemma \ref{lem:equivalence_rel_and_unique_geodesic} provides us a unique geodesic $g$ joining $a$ to $x'_1$ on which $x_1$ lies, and also a unique geodesic $g'$ joining $a$ to $x''_1$ on which $x_1$ lies. If $x_1 = x'_1$, then $g$ is a part of $g''$ by Lemma \ref{lem:uniqueness_from_non_branching}. If $x'_1 = x''_1$, then $g = g'$ under Assumption \ref{assumption:non_branching}. If $x_1 \neq x_1' \neq x''_1$, then, in view of the proof of Lemma \ref{lem:equivalence_rel_and_unique_geodesic}, the first relation in (iii) implies that $(x_2, x_3) = (x'_2, x'_3), (x'_1, x'_3)$ or $(x'_1, x'_2)$. Similarly, the second relation in (iii) implies that $(x_2, x_3) = (x''_2, x''_3), (x''_1, x''_3)$ or $(x''_1, x''_2)$. Among the nine cases arising from the combination of these three cases, four cases do not occur. In the remaining five cases, we can find a point $z \neq x_1, x'_1, x''_1$ such that $g$ and $g'$ respectively prolong to geodesics $\tilde{g}$ and $\tilde{g}'$ joining $a$ to $z$. Since $x_1$ lies on both $\tilde{g}$ and $\tilde{g}'$, we have $\tilde{g} = \tilde{g}'$ under Assumption \ref{assumption:non_branching}. It follows that $x'_1$ lies on the geodesic $g'$, and $g$ is a part of $g'$. 
\end{proof}

\begin{lem} \label{lem:geodesic_associated_to_equivalence_class}
Under Assumption \ref{assumption:non_branching}, we consider Definition \ref{dfn:triples_and_pairs}. For each equivalence class $\Lambda \in (\F_3(\zeta) \sqcup \F_2(\zeta))/\!\!\approx$, there exists a unique geodesic $g_\Lambda$ emanating from $a \in X$ such that $x_1$ lies on $g_\Lambda$ for any triple $(x_1, x_2, x_3) \in \F_3(\zeta)$ in the equivalence class $\Lambda$.
\end{lem}

\begin{proof}
Let $\{ (x_1^i, x_2^i, x_3^i) \}_{i \in I} \subset \F_3(\zeta)$ be the set of triples in the equivalence class $\Lambda$. The index set $I$ is finite. Let $j \in I$ be such that $d(a, x_1^i) \le d(a, x_1^j)$ for all $i \in I$. Then $g_\Lambda$ is given by a geodesic joining $a$ to $x_1^j$: If $I$ consists of at most two elements, then $x_1^i$ lies on $g_\Lambda$ for all $i \in I$ by Lemma \ref{lem:equivalence_rel_and_unique_geodesic}. In general, for any $i \in I$, there is a sequence of triples in $\{ (x_1^i, x_2^i, x_3^i) \}_{i \in I}$ such that its consecutive triples are related by $\sim$. Using Lemma \ref{lem:uniqueness_from_non_branching} and Lemma \ref{lem:transitivity_of_property_wtr_relation}, we can see that $x_1^i$ lies on $g_\Lambda$.
\end{proof}

\begin{lem} \label{lem:consistent_choice_of_base_points}
Under Assumption \ref{assumption:non_branching}, we consider Definition \ref{dfn:triples_and_pairs}. Then there are $\overline{x}^{(x_1, x_2, x_3)} \in X$ for $(x_1, x_2, x_3) \in \F_3(\zeta)$ and $\overline{x}^{(x_1, x_2)} \in X$ for $(x_1, x_2) \in \F_2(\zeta)$ which have the following properties.
\begin{itemize}
\item[(a)]
$a < \overline{x}^{(x_1, x_2, x_3)} < x_1$ for $(x_1, x_2, x_3) \in \F_3(\zeta)$.

\item[(b)]
$a < \overline{x}^{(x_1, x_2)} < x_1$ for $(x_1, x_2) \in \F_2(\zeta)$

\item[(c)]
If $\lambda \approx \mu \in \F_3(\zeta) \sqcup \F_2(\zeta)$, then $\overline{x}^\lambda = \overline{x}^\mu$.

\end{itemize}
\end{lem}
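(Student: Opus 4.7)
The plan is to associate to each element $\lambda \in \F_3(\zeta) \sqcup \F_2(\zeta)$ a canonical geodesic $f_\lambda$ emanating from $a$, show that $\sim$-related elements have geodesics sharing a common initial segment from $a$ that accommodates both first coordinates, and then choose $\overline{x}^\lambda$ on that common initial segment close to $a$.

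First I construct $f_\lambda$. For a triple $\lambda = (x_1, x_2, x_3) \in \F_3(\zeta)$, the defining property of ${}'P^\ell_{p+1}$ forces $\langle a, x_1, x_2, x_3 \rangle$ not to be a $4$-cut, i.e.\ $a < x_1 < x_3$. Using Lemma \ref{lem:geodesic_from_x_y_z}, the concatenation of a geodesic $a \to x_1$ with a geodesic $x_1 \to x_3$ through $x_2$ is a geodesic $a \to x_3$ passing through both $x_1$ and $x_2$; by Assumption \ref{assumption:non_branching} it is unique, and I take it as $f_\lambda$. For a pair $\lambda = (y_1, y_2) \in \F_2(\zeta)$, a direct check in each of the three ways $\lambda$ can arise as $\delta_i(\gamma)$ yields $a < y_1 < y_2$, and I define $f_\lambda$ as the unique geodesic from $a$ to $y_2$ through $y_1$.

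The key step is to verify that whenever $\lambda \sim \mu$, the geodesics $f_\lambda$ and $f_\mu$ agree on an initial segment from $a$ of length at least $\max(d(a, x_1^\lambda), d(a, x_1^\mu))$. This is a case analysis over the nine triple-triple cases indexed by $(\delta_i, \delta_j)$ plus the three pair-triple cases. In every case the point shared between $\lambda$ and $\mu$ is an interior point of both $f_\lambda$ and $f_\mu$, and Assumption \ref{assumption:non_branching} applied to geodesics with a common endpoint forces them to coincide on the initial portion up to that shared point. The most delicate case is $\delta_3((x_1,x_2,x_3)) = \delta_3((x_1', x_2', x_3'))$, where the triples share only $(x_1, x_2)$ and the endpoints $x_3, x_3'$ may differ; here the geodesics only agree on $[0, d(a, x_2)]$, but this already contains $x_1 = x_1'$ and hence suffices.

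Finally I globalize. Each equivalence class $C$ is finite, and any two elements of $C$ are linked by a $\sim$-chain. Since initial segments of geodesics from a common point $a$ are totally ordered by inclusion wherever they agree, the family $\{ f_\lambda \}_{\lambda \in C}$ shares a single initial segment from $a$; by the pairwise estimate and the chain, its length is at least $\min_{\lambda \in C} d(a, x_1^\lambda) > 0$. Using Lemma \ref{lem:a_strong_Menger_convexity} I pick $\overline{x}^C$ on this common initial segment at distance from $a$ strictly smaller than every $d(a, x_1^\lambda)$, and set $\overline{x}^\lambda := \overline{x}^C$ for all $\lambda \in C$. The main obstacle is the combinatorial richness of the case analysis in the pairwise compatibility step, balanced by the uniform role of non-branching in each case.
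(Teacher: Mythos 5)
Your proof is correct and follows essentially the same strategy as the paper: use Assumption \ref{assumption:non_branching} to show that, within each $\approx$-equivalence class, the geodesics from $a$ determined by the triples share an initial segment, and then place $\overline{x}$ on that segment closer to $a$ than every first coordinate. Your version spells out in more detail what the paper asserts tersely (``all the points $x^i_1$ lie on a unique geodesic joining $a$ to $x^j_1$''), namely the pairwise case analysis for $\sim$ and the propagation along a $\sim$-chain down to the minimum distance $\min_{\lambda} d(a, x_1^\lambda)$, which is exactly the length needed for the choice of $\overline{x}$.
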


\begin{proof}
We choose the points in question, focusing on the equivalence classes in $\F_3(\zeta) \sqcup \F_2(\zeta)$. Let $\Lambda \in (\F_3(\zeta) \sqcup \F_2(\zeta))/\!\!\approx$ be an equivalence class. Suppose that $\{ (x^i_1, x^i_2, x^i_3) \}_{i \in I} \subset \F_3(\zeta)$ consists of all the representatives of $\Lambda$. By Lemma \ref{lem:geodesic_associated_to_equivalence_class}, we have a geodesic $g_\Lambda$ which emanates from $a$ (and terminates at $x_1^j$ such that $d(a, x_1^i) \le d(a, x_1^j)$ for all $i \in I$), and $x_1^i$ lies on $g_\Lambda$ for all $i \in I$. Let $k \in I$ be such that $d(a, x_1^k) \le d(a, x_1^i)$ for all $i \in I$, and $\overline{x}^\Lambda$ a point on $g_\Lambda$ such that $a < \overline{x}^\Lambda < x_1^k$. If we put $\overline{x}^{(x^i_1, x^i_2, x^i_3)} = \overline{x}^{\Lambda}$, then (a) is satisfied. If $(y_1, y_2) \in \F_2(\zeta)$ is in $\Lambda$, then there is $z$ such that $(z, y_1, y_2)$, $(y_1, z, y_2)$ or $(y_1, y_2, z)$ belongs to $\{ (x^i_1, x^i_2, x^i_3) \}_{i \in I} \subset \F_3(\zeta)$. In any event, we have $a < \overline{x}^\Lambda < y_1$. If we put $\overline{x}^{(y_1, y_2)} = \overline{x}^\Lambda$, then (b) is satisfied. By design, (c) is satisfied for all triples $(x_1, x_2, x_3)$ and pairs $(y_1, y_2)$ whose equivalence classes are $\Lambda$. Applying this construction to the other equivalence classes, we complete the proof.
\end{proof}

\begin{lem} \label{lem:local_homotopy_for_E2_p1}
Under Assumption \ref{assumption:non_branching}, let $\ell$ be a real number, $a, b \in X$ two points such that $\ell > d(a, b)$, and $E^r_{p, q}(a, b)$ the smoothness spectral sequence for the direct summand $H^\ell_*(a, b)$ of the magnitude homology. Then, for any $p \ge 3$ and any $(p+1)$-chain $\zeta \in {}'E^1_{p, 1}(a, b)$, there are $H\zeta \in E^1_{p+1, 1}(a, b)$ and $Hd^1\zeta \in E^1_{p, 1}(a, b)$ such that $d^1H\zeta + Hd^1\zeta = - \zeta$. 
\end{lem}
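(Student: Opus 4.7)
The plan is to realize $H$ as an insertion homotopy: construct $H\zeta$ by prepending the consistently chosen point $\overline{x}^{(x_1, x_2, x_3)}$ from Lemma \ref{lem:consistent_choice_of_base_points} immediately after the initial $a$ in each chain of $\zeta$, and likewise construct $Hd^1\zeta$ by applying the same insertion to the boundary chains $\partial_i\gamma$ that arise in $d^1\gamma$. The consistent choice of insertion points supplied by Lemma \ref{lem:consistent_choice_of_base_points} is exactly what will make the resulting terms cancel.

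Concretely, for each basis element $\gamma = \langle a, x_1, x_2, x_3, x_4, \ldots, x_p, b \rangle$ appearing in $\zeta$, set
$$
H\gamma \;=\; \langle a,\, \overline{x}^{(x_1, x_2, x_3)},\, x_1, x_2, \ldots, x_p, b \rangle.
$$
The inequalities $a < \overline{x}^{(x_1, x_2, x_3)} < x_1$ (from Lemma \ref{lem:consistent_choice_of_base_points}) and $a < x_1 < x_2$ (from the smoothness of $\gamma$), combined by the summation-of-distances argument of the proof of Lemma \ref{lem:geodesic_from_x_y_z}, give $\overline{x} < x_1 < x_2$, so every interior point of $H\gamma$ is smooth; since $d(a,x_1) = d(a, \overline{x}) + d(\overline{x}, x_1)$ the length is preserved, hence $H\gamma \in E^1_{p+1, 1}(a, b)$. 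For each chain $\partial_i\gamma$ that appears in $d^1\gamma$, define $H\partial_i\gamma$ by prepending the same point $\overline{x}^{(x_1, x_2, x_3)}$. This is legitimate because the first triple or pair of $\partial_i\gamma$ shares at least two entries with $(x_1, x_2, x_3)$ and is therefore equivalent to it under the relation $\approx$ of Definition \ref{dfn:triples_and_pairs}, so that Lemma \ref{lem:consistent_choice_of_base_points} assigns to $\partial_i\gamma$ the very same base point. Finally set
$$
Hd^1\zeta \;=\; \sum_\gamma N_\gamma \sum_{i=1}^{p} (-1)^i H\partial_i\gamma \;\in\; E^1_{p, 1}(a, b).
$$
With these definitions, $\partial_{i+1} H\gamma = H\partial_i\gamma$ as chains. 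Expanding $d^1 H\gamma = \sum_{j=1}^{p+1} (-1)^j \partial_j H\gamma$ and isolating $j=1$ yields
$$
d^1 H\gamma + Hd^1\gamma \;=\; -\gamma + \sum_{i=1}^{p} \bigl((-1)^{i+1} + (-1)^i\bigr)\, H\partial_i\gamma \;=\; -\gamma,
$$
and summing over $\gamma$ in $\zeta$ gives the claimed identity.

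The technical obstacle, where Assumption \ref{assumption:non_branching} is genuinely needed, is verifying that for each $i$ the two spectral-sequence smoothness tests defining $\partial_{i+1} H\gamma$ and $\partial_i\gamma$ as nonzero terms of their respective $E^1$-groups coincide, so that the paired terms are simultaneously zero or simultaneously equal to the same chain. For indices $i \ge 4$ the affected positions are far from the inserted $\overline{x}$ and the two tests are literally identical. For the small indices $i \in \{1, 2, 3\}$ the relevant neighbor tests involve $\overline{x}$ in place of $a$, for instance translating ``$a < x_2 < x_3$'' into ``$\overline{x} < x_2 < x_3$'' and similar swaps for $x_1$ and $x_3$. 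Here Assumption \ref{assumption:non_branching} together with the placement $a < \overline{x} < x_1$ on a common geodesic lets one chain together distance equalities to identify these conditions. Once this bookkeeping is carried out, all higher-index terms cancel pairwise and only the residue $-\gamma$ survives, establishing the lemma.
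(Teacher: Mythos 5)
Your construction is precisely the paper's: insert the consistently chosen point $\overline{x}^{\lambda}$ of Lemma \ref{lem:consistent_choice_of_base_points} directly after $a$ in each chain of $\zeta$ and in each term of $d^1\zeta$, use the compatibility $\overline{x}^{\delta_k(\lambda)} = \overline{x}^{\lambda}$ to get $\partial_{i+1}H\gamma = H\partial_i\gamma$ as chains, and cancel everything except the $j=1$ term. One sharpening of your final bookkeeping: the backward implication $\overline{x} < x_1 < x_3 \Rightarrow a < x_1 < x_3$ does \emph{not} follow from $a < \overline{x} < x_1$ by chaining distance equalities (only the forward direction does), so the small-index tests are not literally ``identified''; instead, every test with left endpoint $a$ (namely $a < x_1 < x_2$, $a < x_1 < x_3$, $a < x_2 < x_3$) holds automatically because $\zeta \in {}'E^1_{p,1}(a,b)$ forces $\langle a, x_1, x_2, x_3 \rangle$ not to be a $4$-cut, and the forward implications then make the corresponding tests with left endpoint $\overline{x}$ hold as well, so each matched pair of terms is simultaneously present.
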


\begin{proof}
Classifying the chains in $\F(\zeta)$ by $\F_3(\zeta)$, we can express $\zeta$ as follows
$$
\zeta = 
\sum_{\lambda}
\sum_{i = 1}^{n_\lambda} N^\lambda_i
\langle a, x^\lambda_1, x^\lambda_2, x^\lambda_3, 
x_4^{\lambda, i}, x_5^{\lambda, i}, \cdots,
x_p^{\lambda, i}, b \rangle,
$$
where $\lambda = (x^\lambda_1, x^\lambda_2, x^\lambda_3)$ runs over $\F_3(\zeta)$. By definition, $\langle a, x^\lambda_1, x^\lambda_2, x^\lambda_3 \rangle$ is not a $4$-cut for every $\lambda$. We define $H\zeta \in E^1_{p+1, 1}(a, b)$ to be
$$
H\zeta
=
\sum_{\lambda}
\sum_{i = 1}^{n_\lambda} N^\lambda_i
\langle a, \overline{x}^\lambda, x^\lambda_1, x^\lambda_2, x^\lambda_3, 
x_4^{\lambda, i}, x_5^{\lambda, i}, \cdots,
x_p^{\lambda, i}, b \rangle,
$$
by using the points $\overline{x}^\lambda$ in Lemma \ref{lem:consistent_choice_of_base_points}. Notice that neither $\langle a, \overline{x}^\lambda, x^\lambda_1, x^\lambda_2 \rangle$ nor $\langle \overline{x}^\lambda, x^\lambda_1, x^\lambda_2, x^\lambda_3 \rangle$ is a $4$-cut. For a $(p-1)$-chain $\langle a, x_1, x_2, \cdots, x_{p-1}, b \rangle \in E^1_{p-1, 1}(a, b)$ such that $\mu = (x_1, x_2) \in \F_2(\zeta)$, we put
$$
H\langle a, x_1, x_2, \cdots, x_{p-1}, b \rangle
= \langle a, \overline{x}^\mu, x_1, x_2, \cdots, x_{p-1}, b \rangle
\in E^1_{p, 1}(a, b).
$$
Extending this linearly, we define $Hd^1\zeta \in E^1_{p, 1}(a, b)$. By Lemma \ref{lem:consistent_choice_of_base_points}, it holds that $\overline{x}^{\delta_k(\lambda)} = \overline{x}^\lambda$ for $\lambda \in \F_3(\zeta)$ and $k = 1, 2, 3$. Thanks to this property, we can verify $d^1H\zeta + Hd^1\zeta = - \zeta$ by direct computations. 
\end{proof}

\begin{prop} \label{prop:E2_p1}
Under Assumption \ref{assumption:non_branching}, let $\ell$ be a real number, and $E^r_{p, q}(a, b)$ the smoothness spectral sequence for the direct summand $H^\ell_*(a, b)$ of the magnitude homology with $a, b \in X$. If $\ell > d(a, b)$, then $E^2_{p, 1}(a, b) = 0$ for all $p$.
\end{prop}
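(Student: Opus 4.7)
The plan is to combine the two preceding lemmas (Lemma \ref{lem:replacable_by_chains_starting_with_non_4_cut} and Lemma \ref{lem:local_homotopy_for_E2_p1}) into a proof that, for $\ell>d(a,b)$, every $d^1$-cycle in $E^1_{p,1}(a,b)$ is a $d^1$-boundary. This implies $E^2_{p,1}(a,b)=H_p(E^1_{*,1}(a,b),d^1)=0$.

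First I would dispose of the small-$p$ cases, where the preceding groundwork already gives vanishing. For $p=0$, the group $E^1_{0,1}(a,b)$ is generated by the $1$-chain $\langle a,b\rangle$ of length $\ell$, which exists only when $\ell=d(a,b)$; hence $E^1_{0,1}(a,b)=0$ and a fortiori $E^2_{0,1}(a,b)=0$ under the hypothesis $\ell>d(a,b)$. For $p=1$, Proposition \ref{prop:E_11} (its proof, not just its conclusion) gives $E^1_{1,1}(a,b)=0$, so $E^2_{1,1}(a,b)=0$. For $p=2$, the vanishing $E^2_{2,1}(a,b)=0$ is exactly the content of Proposition \ref{prop:vanishing_E2_21_ell_general}, which only needs $X$ geodesic.

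Next, for $p\ge 3$ I would take an arbitrary cycle $\zeta\in E^1_{p,1}(a,b)$ with $d^1\zeta=0$. By Lemma \ref{lem:replacable_by_chains_starting_with_non_4_cut} there is a chain $\beta\in E^1_{p+1,1}(a,b)$ so that $\zeta':=\zeta+d^1\beta$ lies in the subgroup ${}'E^1_{p,1}(a,b)$ of chains whose initial $3$-subchain is not a $4$-cut. Since $\zeta'$ is again a $d^1$-cycle, Lemma \ref{lem:local_homotopy_for_E2_p1} furnishes elements $H\zeta'\in E^1_{p+1,1}(a,b)$ and $Hd^1\zeta'\in E^1_{p,1}(a,b)$ satisfying
\[
d^1 H\zeta' + Hd^1\zeta' = -\zeta'.
\]
Because $d^1\zeta'=0$, the term $Hd^1\zeta'$ vanishes (the construction in Lemma \ref{lem:local_homotopy_for_E2_p1} is $\mathbb Z$-linear on its argument), and therefore $\zeta'=-d^1H\zeta'$ is a $d^1$-boundary. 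Consequently $\zeta=\zeta'-d^1\beta$ is also a $d^1$-boundary, proving $E^2_{p,1}(a,b)=0$.

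Strictly speaking, no genuine obstacle remains: the analytic and combinatorial difficulties have been isolated into the two preceding lemmas. The only small subtlety in writing it up is to make sure that the local homotopy of Lemma \ref{lem:local_homotopy_for_E2_p1}, which is defined using data (the triples $\F_3(\zeta')$ and pairs $\F_2(\zeta')$, together with the insertion points of Lemma \ref{lem:consistent_choice_of_base_points}) depending on the particular representative $\zeta'$, can indeed be evaluated on $d^1\zeta'=0$ to yield $0$; this is immediate since $H$ is defined as a $\mathbb Z$-linear extension on the generators appearing in $d^1\zeta'$. With that observation the proof reduces to one line of bookkeeping, and Proposition \ref{prop:E2_p1} follows.
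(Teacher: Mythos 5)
Your proof is correct and follows essentially the same route as the paper: dispose of $p\le 2$ via Proposition \ref{prop:vanishing_E2_21_ell_general}, then for $p\ge 3$ reduce a cycle to ${}'E^1_{p,1}(a,b)$ by Lemma \ref{lem:replacable_by_chains_starting_with_non_4_cut} and kill it with the homotopy of Lemma \ref{lem:local_homotopy_for_E2_p1}, using that $Hd^1\zeta'=0$ for a cycle. Your explicit remark that the homotopy term vanishes on cycles is a point the paper leaves implicit, but the argument is identical.
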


\begin{proof}
In the case of $p \le 1$, we have nothing to prove. The case of $p = 2$ is already proved in Proposition \ref{prop:vanishing_E2_21_ell_general}. Therefore we consider the case of $p \ge 3$. We prove in this case that any $(p+1)$-cycle $\zeta \in E^1_{p, 1}(a, b)$ is null-homologous. We can assume $\zeta \in {}'E^1_{p, 1}(a, b)$ by Lemma \ref{lem:replacable_by_chains_starting_with_non_4_cut}. Then Lemma \ref{lem:local_homotopy_for_E2_p1} leads to $\zeta = - d^1H\zeta$. 
\end{proof}

\subsection{A cycle associated to geodesics}
\label{sec:cycle_associated_to_geodesics}

The aim of this subsection is to construct a cycle in a direct summand of the chain complex $(E^1, d^1)$. These cycles are associated to a choice of geodesics, and will represent the non-trivial magnitude homology classes in the main theorem of this section (Theorem \ref{thm:complete_description_under_non_branching}, which is the same as Theorem \ref{thm:main} in \S\ref{sec:introduction}.)

\begin{dfn} \label{dfn:magnitude_cycle}
Let $(X, d)$ be a metric space, $q$ a positive integer, $\ell_1, \ldots, \ell_q$ positive real numbers, and $\varphi = \langle \varphi_0, \cdots, \varphi_q \rangle \in P_q(X)$ a proper $q$-chain. 
\begin{itemize}
\item[(a)]
An \textit{admissible set} $\{ x_1, x'_1; \ldots; x_q, x'_q \}$ for $\varphi$ consists of $x_1, x'_1, \ldots, x_q, x'_q \in X$ such that
\begin{align*}
&\varphi_{i-1} < x_i < \varphi_i, &
&x_i \not< \varphi_i \not< x_{i+1}, &
&x_i \not< \varphi_i \not< x'_{i+1}, \\
&\varphi_{i-1} < x'_i < \varphi_i, &
&x'_i \not< \varphi_i \not< x_{i+1}, &
&x'_i \not< \varphi_i \not< x'_{i+1},
\end{align*}
for $i = 1, \ldots, q-1$.

\item[(b)]
Given an admissible set $\{ x_1, x'_1; \ldots; x_q, x'_q \}$ for $\varphi$, we define
$$
\gamma(x_1, x'_1; \cdots; x_q, x'_q)
= \langle 
\varphi_0, x_1 - x'_1, \varphi_1, 
\cdots, \varphi_{q-1},
x_q - x'_q, \varphi_q
\rangle
\in C^{\ell_1, \ldots, \ell_q}_{2q}(\varphi),
$$
where $\ell_1 = d(\varphi_0, \varphi_1), \ldots, \ell_q = d(\varphi_{q-1}, \varphi_q)$.

\end{itemize}
\end{dfn}

In the above, the notation as noted in Remark \ref{rem:notation_in_noncommutative_polynomial} is applied. For instance,
\begin{align*}
\langle \varphi_0, x_1 - x'_1, \varphi_1 \rangle
&= \langle \varphi_0, x_1, \varphi_1 \rangle
- \langle \varphi_0, x'_1, \varphi_1 \rangle, \\
\langle \varphi_0, x_1 - x'_1, \varphi_1, x_2 - x'_2, \varphi_2 \rangle
&= \langle \varphi_0, x_1, \varphi_1, x_2, \varphi_2 \rangle
- \langle \varphi_0, x_1, \varphi_1, x'_2, \varphi_2 \rangle \\
&- \langle \varphi_0, x'_1, \varphi_1, x_2, \varphi_2 \rangle
+ \langle \varphi_0, x'_1, \varphi_1, x'_2, \varphi_2 \rangle.
\end{align*}

\begin{lem}
The $2q$-chain $\gamma(x_1, x'_1; \cdots; x_q, x'_q) \in C^{\ell_1, \ldots, \ell_q}_{2q}(\varphi)$ in Definition \ref{dfn:magnitude_cycle} satisfies the following
\begin{gather*}
d^1\gamma(x_1, x'_1; \cdots; x_q, x'_q) 
= \partial\gamma(x_1, x'_1; \cdots; x_q, x'_q) = 0, \\
\gamma(\cdots; x_{i-1}, x'_{i-1}; x_i, x'_i; x_{i+1}, x'_{i+1}; 
\cdots)
= - 
\gamma(\cdots; x_{i-1}, x'_{i-1}; x'_i, x_i; x_{i+1}, x'_{i+1}; 
\cdots).
\end{gather*}
\end{lem}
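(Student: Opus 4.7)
The plan is to expand $\gamma$ using the bilinear notation of Remark \ref{rem:notation_in_noncommutative_polynomial} into an alternating sum of proper $2q$-chains, and then verify the three claims by inspecting each boundary slot. Write
$$
\gamma(x_1, x'_1; \cdots; x_q, x'_q)
= \sum_{\epsilon \in \{0,1\}^q} (-1)^{|\epsilon|}
\langle \varphi_0, x^{\epsilon_1}_1, \varphi_1, x^{\epsilon_2}_2, \varphi_2, \cdots, \varphi_{q-1}, x^{\epsilon_q}_q, \varphi_q \rangle,
$$
where $x^0_i = x_i$ and $x^1_i = x'_i$. Each summand is a proper $2q$-chain with $\varphi_0, \varphi_1, \ldots, \varphi_q$ at the even positions $0, 2, \ldots, 2q$ and $x^{\epsilon_i}_i$ at the odd positions $2i-1$. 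Admissibility ensures $\varphi_{i-1} < x^{\epsilon_i}_i < \varphi_i$, so the $x^{\epsilon_i}_i$'s are smooth and the $\varphi_i$'s are singular, which places each summand in $C^{\ell_1, \ldots, \ell_q}_{2q}(\varphi)$ with frame exactly $\langle \varphi_0, \cdots, \varphi_q \rangle$.

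The first step I would carry out is the observation that, in the formula $\partial = \sum_{i=1}^{2q-1}(-1)^i\partial_i$, every $\partial_{2i}$ (removing a $\varphi_i$ for $1 \le i \le q-1$) annihilates each summand: the operator $\partial_{2i}$ would require $x^{\epsilon_i}_i < \varphi_i < x^{\epsilon_{i+1}}_{i+1}$, whereas admissibility gives $x^{\epsilon_i}_i \not< \varphi_i \not< x^{\epsilon_{i+1}}_{i+1}$ for all four sign choices. Hence $\partial$ collapses to the sum of the smooth-position terms $\partial_{2i-1}$, and since removing a smooth point drops the smooth count by exactly one, this is already the formula for $d^1$ on $E^1_{q, q}(\varphi)$. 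In particular, $\partial \gamma = d^1 \gamma$, so it suffices to prove one of them vanishes.

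The second step is to exhibit the cancellation at the smooth positions. For each fixed $i$, the remaining condition $\varphi_{i-1} < x^{\epsilon_i}_i < \varphi_i$ is satisfied by admissibility, so $\partial_{2i-1}$ removes the point $x^{\epsilon_i}_i$ and produces a $(2q-1)$-chain $\langle \varphi_0, x^{\epsilon_1}_1, \cdots, \varphi_{i-1}, \varphi_i, \cdots, x^{\epsilon_q}_q, \varphi_q \rangle$ whose content is independent of $\epsilon_i$. Fixing $i$ and all the $\epsilon_j$ with $j \neq i$, the two summands with $\epsilon_i = 0$ and $\epsilon_i = 1$ carry overall signs differing by $-1$ and therefore cancel, yielding $\partial \gamma = d^1 \gamma = 0$. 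The antisymmetry under the swap $(x_i, x'_i) \leftrightarrow (x'_i, x_i)$ is then immediate: in the bilinear expansion the factor $x_i - x'_i$ becomes $x'_i - x_i = -(x_i - x'_i)$, while every other factor is unaffected.

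This argument is essentially bookkeeping, and I do not foresee a genuine obstacle. The only place where the geometric content of Definition \ref{dfn:magnitude_cycle}(a) enters is the observation that \emph{all} of the four non-betweenness conditions $x_i, x'_i \not< \varphi_i \not< x_{i+1}, x'_{i+1}$ are needed so that every $\partial_{2i}$ kills \emph{every} summand of the expansion, regardless of the choice of $\epsilon$; absent this uniform vanishing, the $\varphi_i$-positions would contribute chains whose frame differs from $\varphi$, and the identification $\partial \gamma = d^1 \gamma$ would break down.
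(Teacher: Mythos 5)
Your proposal is correct, and it is essentially the verification the paper declares ``straightforward'': expand the bilinear notation into $2^q$ signed proper chains, note that admissibility kills every $\partial_{2i}$ at the singular $\varphi_i$-slots while the $\partial_{2i-1}$-terms cancel in pairs independent of $\epsilon_i$, and read off the antisymmetry from the sign of $x_i - x'_i$.
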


\begin{proof}
The verification is straightforward.
\end{proof}

\begin{lem} \label{lem:homologous_cyclces}
In the setup of Definition \ref{dfn:magnitude_cycle}, suppose that
\begin{align*}
&\{ x_1, x'_1; \ldots; x_i, x'_i; \ldots; x_q, x'_q \}, &
&\{ x_1, x'_1; \ldots; \overline{x}_i, x'_i; \ldots; x_q, x'_q \}
\end{align*}
are admissible for $\varphi$. If $\varphi_{i-1} < x_i < \overline{x}_i < \varphi_i$ or $\varphi_{i-1} < \overline{x}_i < x_i < \varphi_i$ holds true, then the associated cycles
$$
\gamma(x_1, x'_1; \cdots; x_i, x'_i; \cdots; x_q, x'_q), \
\gamma(x_1, x'_1; \cdots; \overline{x}_i, x'_i; \cdots; x_q, x'_q) 
\in C^{\ell_1, \ldots, \ell_q}_{2q}(\varphi)
$$
are homologous with respect to $d^1$. 
\end{lem}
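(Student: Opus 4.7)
The plan is to exhibit an explicit chain $\beta \in C^{\ell_1, \ldots, \ell_q}_{2q+1}(\varphi)$ with $d^1 \beta = \gamma^+ - \gamma^-$, where $\gamma^\pm$ denote the two cycles in the statement. Assuming without loss of generality that $\varphi_{i-1} < x_i < \overline{x}_i < \varphi_i$ (the reverse ordering being entirely symmetric), set
$$
\beta = \langle \varphi_0, x_1 - x'_1, \varphi_1, \ldots, \varphi_{i-1}, x_i, \overline{x}_i, \varphi_i, \ldots, \varphi_{q-1}, x_q - x'_q, \varphi_q \rangle,
$$
so that the $i$-th slot carries both $x_i$ and $\overline{x}_i$ as two consecutive smooth points while every other slot $j$ still carries the formal difference $x_j - x'_j$.

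I would first verify that each summand of $\beta$ is a proper chain with frame $\varphi$ and total length $\ell_1 + \cdots + \ell_q$. The non-trivial checks are the singularity of $\varphi_{i-1}$ in the new chain (its new right neighbour is $x_i$, handled by the admissibility relation $y_{i-1} \not< \varphi_{i-1} \not< x_i$ for $y_{i-1} \in \{x_{i-1}, x'_{i-1}\}$), the singularity of $\varphi_i$ (its new left neighbour is $\overline{x}_i$, handled by $\overline{x}_i \not< \varphi_i \not< y_{i+1}$ from admissibility of the second set), and the telescoping identity $d(\varphi_{i-1}, x_i) + d(x_i, \overline{x}_i) + d(\overline{x}_i, \varphi_i) = \ell_i$ from the hypothesis. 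Then I would compute $d^1 \beta$ by summing $(-1)^k \partial_k \beta$ over the positions $k$ of the smooth points. In every slot $j \ne i$, the two paired summands of $\beta$ that differ only in the choice $y_j \in \{x_j, x'_j\}$ appear with opposite signs, and removing the slot-$j$ smooth point from each yields the same chain (with $\varphi_{j-1}$ directly adjacent to $\varphi_j$), so these contributions cancel—either because both are valid $d^1$-terms that then agree, or because both are zero for the same frame-change reason. The surviving terms come only from removing $x_i$ (position $2i-1$, sign $-1$) and $\overline{x}_i$ (position $2i$, sign $+1$). To confirm that these are bona fide $d^1$-contributions one must check that $\varphi_{i-1}$ remains singular after deletion of $x_i$, i.e.\ $y_{i-1} \not< \varphi_{i-1} \not< \overline{x}_i$, which follows in one line from
$$
d(y_{i-1}, \varphi_{i-1}) + d(\varphi_{i-1}, \overline{x}_i)
= d(y_{i-1}, \varphi_{i-1}) + d(\varphi_{i-1}, x_i) + d(x_i, \overline{x}_i)
> d(y_{i-1}, x_i) + d(x_i, \overline{x}_i)
\ge d(y_{i-1}, \overline{x}_i),
$$
using the admissibility strict inequality and the triangle inequality; the analogous check for $\varphi_i$ after deletion of $\overline{x}_i$ is immediate from admissibility of the original set. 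Collecting signs and summing over the $2^{q-1}$ remaining tuples $(y_1, \ldots, \widehat{y_i}, \ldots, y_q)$ produces exactly $\langle \varphi_0, x_1 - x'_1, \ldots, x_i - \overline{x}_i, \ldots, x_q - x'_q, \varphi_q \rangle$, which equals $\gamma^+ - \gamma^-$.

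The main obstacle is the bookkeeping of the pairwise cancellation in the slots $j \ne i$: one has to argue uniformly that, regardless of whether the frame survives the deletion in slot $j$, the contributions of the $y_j = x_j$ and $y_j = x'_j$ summands enter with opposite signs into the same resulting chain, so that their sum vanishes. This is ultimately a consequence of the antisymmetry built into the formal differences $x_j - x'_j$ in the definition of $\gamma(\cdots)$, but it must be checked carefully for each $j$, in particular for $j = i \pm 1$ where one of the new neighbours of $\varphi_{j}$ is $x_i$ or $\overline{x}_i$. Notably, this argument requires only the admissibility conditions and the hypothesis $\varphi_{i-1} < x_i < \overline{x}_i < \varphi_i$; Assumption \ref{assumption:non_branching} is not explicitly invoked at this step, though geometrically it is precisely what forces $x_i$ and $\overline{x}_i$ to lie, in order, on a common geodesic joining $\varphi_{i-1}$ to $\varphi_i$.
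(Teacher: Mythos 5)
Your proposal is correct and uses exactly the chain $\beta$ that the paper's proof uses (placing $x_i, \overline{x}_i$ consecutively in the $i$-th slot and taking $d^1\beta$); you have simply spelled out the sign bookkeeping and the singularity checks that the paper leaves to the reader. No further comment is needed.
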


\begin{proof}
If $\varphi_{i-1} < x_i < \overline{x}_i < \varphi_i$, then we have an element
$$
\beta = 
\langle \varphi_0, x_1 - x'_1, \varphi_1, \cdots,
\varphi_{i-1}, x_i, \overline{x}_i, \varphi_{i+1}, \cdots, 
\varphi_{q-1}, x_q - x'_q, \varphi_q \rangle
\in C^{\ell_1, \ldots, \ell_q}_{2q+1}(\varphi)
$$
whose boundary $d^1\beta$ yields the difference of the cycles. The same construction applies to the case that $\varphi_{i-1} < \overline{x}_i < x_i < \varphi_i$.
\end{proof}

\begin{lem} \label{lem:existence_of_admissible_set}
In the setup of Definition \ref{dfn:magnitude_cycle}, we suppose that $(X, d)$ is a geodesic space satisfying Assumption \ref{assumption:non_branching}. Let $f_i^0, f_i^1 \in \Geod(\varphi_{i-1}, \varphi_i)$ be geodesics such that $f_i^0 \neq f_i^1$ for $i = 1, \ldots, q$. Then there are points $x^{f_i^0} \in f_i((0, \ell_i)), x^{f_i^1} \in f_i^1((0, \ell_i))$ such that $\{ x^{f_1^0}, x^{f_1^1}; \ldots; x^{f_q^0}, x^{f_q^1} \}$ is admissible for $\varphi$.
\end{lem}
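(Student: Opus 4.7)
The plan is to first extract a clean geometric consequence of Assumption~\ref{assumption:non_branching}: for each $i = 1, \ldots, q-1$, we must have $\varphi_{i-1} \not< \varphi_i \not< \varphi_{i+1}$, equivalently $d(\varphi_{i-1}, \varphi_{i+1}) < \ell_i + \ell_{i+1}$. I would argue this by contradiction. Suppose $\varphi_{i-1} < \varphi_i < \varphi_{i+1}$; then by Lemma~\ref{lem:geodesic_from_x_y_z} there is a geodesic from $\varphi_{i-1}$ to $\varphi_{i+1}$ passing through $\varphi_i$, and in particular a fixed geodesic $g_R \in \Geod(\varphi_i, \varphi_{i+1})$. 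For any $g \in \Geod(\varphi_{i-1}, \varphi_i)$, the concatenation $g \cdot g_R$ has length $\ell_i + \ell_{i+1} = d(\varphi_{i-1}, \varphi_{i+1})$ and is therefore a geodesic passing through the interior point $\varphi_i$; Assumption~\ref{assumption:non_branching} forces all such concatenations to agree, so $g$ is uniquely determined. This contradicts $f_i \neq f'_i$.

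Next, for each $i = 1, \ldots, q-1$ and each $(A, B) \in \{f_i, f'_i\} \times \{f_{i+1}, f'_{i+1}\}$, I would introduce the continuous non-negative function
\[
\beta^{A,B}_i(s, t) = s + t - d(A(\ell_i - s), B(t))
\]
on $[0, \ell_i] \times [0, \ell_{i+1}]$. The first step yields $\beta^{A,B}_i(\ell_i, \ell_{i+1}) = \ell_i + \ell_{i+1} - d(\varphi_{i-1}, \varphi_{i+1}) > 0$ for all four choices of $(A, B)$. A basic observation is that the zero set $\{\beta^{A,B}_i = 0\}$ is downward closed: if the concatenation $A|_{[\ell_i - s, \ell_i]} \cdot B|_{[0, t]}$ is a geodesic, then so is every sub-concatenation corresponding to $(s', t') \leq (s, t)$. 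Hence the good set $\{\beta^{A,B}_i > 0\}$ is open and upward closed, and in particular contains an open neighborhood of the corner $(\ell_i, \ell_{i+1})$.

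The final step is to choose $r_i, r'_i \in (0, \ell_i)$ for $i = 1, \ldots, q$ so that the points $x^{f_i} = f_i(r_i)$, $x^{f'_i} = f'_i(r'_i)$ form an admissible set. Since admissibility at $\varphi_i$ automatically supplies $\varphi_{i-1} < x^{f_i}, x^{f'_i} < \varphi_i$, the substantive content is that $(\ell_i - r_A, r_B)$ lies in the good set of $\beta^{A,B}_i$ for each of the four pairs $(A, B)$. I would proceed inductively in $i$: choose $r_1, r'_1$ freely, and at each step use the upward-closedness and openness of the four good sets at $\varphi_i$ to pick $r_{i+1}, r'_{i+1}$ so that simultaneously $(\ell_i - r_i, r_{i+1}), (\ell_i - r_i, r'_{i+1}), (\ell_i - r'_i, r_{i+1}), (\ell_i - r'_i, r'_{i+1})$ lie in the respective good sets.

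The hard part will be ensuring this inductive choice is consistent across all $\varphi_i$'s simultaneously. Since $r_{i+1}$ enters the admissibility condition at $\varphi_i$ (as the $t$-coordinate, where we want it large) and at $\varphi_{i+1}$ (as determining $\ell_{i+1} - r_{i+1}$, where we want that large), the naive "near the corner" choice creates a conflict. The resolution exploits the fact that each good set is not merely an arbitrarily small neighborhood of $(\ell_i, \ell_{i+1})$: by the upward closedness together with the strict positivity of $\beta^{A,B}_i$ at $(\ell_i, \ell_{i+1})$, the good set extends into a genuine open region of the product, and a careful compatible choice of $r_i, r'_i$ (leveraging the continuity of the $\beta^{A,B}_i$ in all variables, and possibly a joint selection using the openness of the intersection of the $4(q-1)$ conditions in the product space $\prod_i (0, \ell_i)^2$) produces an admissible tuple.
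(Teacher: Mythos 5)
Your step 1 (that $\varphi_{i-1} \not< \varphi_i \not< \varphi_{i+1}$ for $i = 1, \ldots, q-1$, via a first application of Assumption \ref{assumption:non_branching}) is correct and is exactly the paper's opening move, and your reformulation of admissibility in terms of the functions $\beta^{A,B}_i$ is faithful. The final step, however, has a genuine gap which you locate but do not close. From the facts you actually establish --- positivity of $\beta^{A,B}_i$ at the corner $(\ell_i,\ell_{i+1})$, openness of the good set, and upward closedness --- the most you can conclude is that the good set at $\varphi_i$ contains a box $(\ell_i-\epsilon_i,\ell_i]\times(\ell_{i+1}-\delta_i,\ell_{i+1}]$. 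Nothing you prove rules out the good set being exactly such a box: its complement in the square is downward closed, so this is consistent with your observation. In that scenario the condition at $\varphi_i$ forces $r_{i+1}>\ell_{i+1}-\delta_i$ while the condition at $\varphi_{i+1}$ forces $r_{i+1}<\epsilon_{i+1}$, which is impossible once $\delta_i+\epsilon_{i+1}\le\ell_{i+1}$; so for $q\ge 3$ the argument does not go through. The assertion that ``the good set extends into a genuine open region'' is precisely what must be proved, and upward closedness together with positivity at the maximum point of the square yields no such extension.

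The missing ingredient is a second application of Assumption \ref{assumption:non_branching}: for every interior point $x=A(\ell_i-s)$ of $A\in\{f_i,f'_i\}$ one has $x\not<\varphi_i\not<\varphi_{i+1}$. Indeed, if $x<\varphi_i<\varphi_{i+1}$, then concatenating the subsegment of $A$ from $x$ to $\varphi_i$ with $f_{i+1}$ and, separately, with $f'_{i+1}$ produces two geodesics from $x$ to $\varphi_{i+1}$ sharing the interior point $\varphi_i$, which forces $f_{i+1}=f'_{i+1}$, a contradiction. In your notation this says $\beta^{A,B}_i(s,\ell_{i+1})>0$ for \emph{all} $s\in(0,\ell_i]$, i.e.\ the entire top edge of the square lies in the good set, not merely a neighborhood of the corner. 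This dissolves the conflict you identified: choose $r_1,r'_1$ arbitrarily, and at each step choose $r_{i+1},r'_{i+1}$ close to $\ell_{i+1}$, using continuity of $\beta^{A,B}_i(s,\cdot)$ at $t=\ell_{i+1}$ for the (at most two) already-fixed values of $s$; the subsequent step at $\varphi_{i+1}$ then works for these values of $r_{i+1},r'_{i+1}$ no matter how close to $\ell_{i+1}$ they are, because the edge fact holds for arbitrary $s$. This is exactly the inductive argument in the paper's proof.
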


\begin{proof}
We find the points in question by an induction: In the case of $q = 1$, we can arbitrarily choose $x^{f^j_1} \in f^j_1((0, \ell_1))$ for $j = 0, 1$. In the case of $q = 2$, we notice that $\varphi_0 \not< \varphi_1 \not< \varphi_2$. To see this relation, suppose $\varphi_0 < \varphi_1 < \varphi_2$. Then the concatenation of the geodesics $f_1^0$ and $f_2^0$ gives rise to a geodesic joining $\varphi_0$ to $\varphi_2$. Another geodesic joining $\varphi_0$ to $\varphi_2$ is given by the concatenation of $f_1^1$ and $f_2^0$. Since these geodesics share $\varphi_1$, Assumption \ref{assumption:non_branching} implies that they must be the same, and hence $f_1^0 = f_1^1$. This contradicts to $f_1^0 \neq f_1^1$, so that $\varphi_0 \not< \varphi_1 \not< \varphi_2$. Because of this relation, there are $x^{f^j_i} \in f^j_i((0, \ell_1))$ for $i = 1, 2$ and $j = 0, 1$ such that $x^{f^j_1} \not< \varphi_1 \not< x^{f^{j'}_2}$ for $j, j' \in \{ 0, 1 \}$. In the case of $q \ge 3$, let $k$ be such that $2 \le k \le q-1$. Suppose that we have $x^{f^j_i} \in f^j_i((0, \ell_i))$ such that $x^{f^j_i} \not< \varphi_i \not< x^{f^{j'}_{i+1}}$ for $i = 1, \ldots, k-1$ and $j, j' \in \{ 0, 1 \}$. Then it holds that $x^{f^j_k} \not< \varphi_k \not< \varphi_{k+1}$ for $j = 0, 1$. Actually, if we assume $x^{f^j_k} < \varphi_k < \varphi_{k+1}$, then we have a geodesic which joins $x^{f^j_k}$ to $\varphi_{k+1}$ and passes through $\varphi_k$ and a point on $f^0_{k+1}$ other than $\varphi_k$ or $\varphi_{k+1}$. We also have a geodesic which joins $x^{f^j_k}$ to $\varphi_{k+1}$ and passes through $\varphi_k$ and a point on $f^1_{k+1}$ other than $\varphi_k$ or $\varphi_{k+1}$. Under Assumption \ref{assumption:non_branching}, these geodesics must coincide, which contradicts to $f^0_{k+1} \neq f^1_{k+1}$. Thus $x^{f^j_k} \not< \varphi_k \not< \varphi_{k+1}$ for $j = 0, 1$. We can then find $x^{f^{j'}_{k+1}} \in f^{j'}_{k+1}((0, \ell_{k+1}))$ so that $x^{f^j_k} \not< \varphi_k \not< x^{f^{j'}_{k+1}}$ for $j, j' \in \{ 0, 1 \}$. Now, the induction works, and we end up with an admissible set for $\varphi$.
\end{proof}

We remark that in the setup of Lemma \ref{lem:existence_of_admissible_set}, all the points $\varphi_i$ are singular in $\varphi$.

\begin{prop}
In the setup of Definition \ref{dfn:magnitude_cycle}, we suppose that $(X, d)$ is a geodesic space satisfying Assumption \ref{assumption:non_branching}. Let $f^0_i, f^1_i \in \Geod(\varphi_{i-1}, \varphi_i)$ be geodesics such that $f_i^0 \neq f_i^1$ for $i = 1, \ldots, q$.
\begin{enumerate}
\item[(a)]
We have a well-defined homology class
$$
\Gamma(f^0_1, f^1_1; \cdots; f^0_q, f^1_q) 
\in H^{\ell_1, \ldots, \ell_q}_{2q}(\varphi)
$$
represented by the cycle $\gamma(x^{f_1^0}, x^{f_1^0}; \cdots; x^{f_q^0}, x^{f_q^1})$ associated to an admissible set $\{ x^{f_1^0}, x^{f_1^1}; \ldots, x^{f_q^0}, x^{f_q^1} \}$ for $\varphi$ such that $x^{f_i^j} \in f_i^j((0, \ell_i))$ for $i = 1, \ldots, q$ and $j = 0, 1$.

\item[(b)]
For $i = 1, \ldots, q$, it holds that
$$
\Gamma(\cdots; f_i^0, f_i^1; \cdots)
= - \Gamma(\cdots; f_i^1, f_i^0; \cdots).
$$
Further, if $f_k^2 \in \Geod(\varphi_{k-1}, \varphi_k)$ is such that $f_k^2 \neq f_k^0$ and $f_k^2 \neq f_k^1$, then
$$
\Gamma(\cdots; f_k^0, f_k^1; \cdots) + \Gamma(\cdots; f_k^1, f_k^2; \cdots)
= \Gamma(\cdots; f_k^0, f_k^2; \cdots).
$$

\end{enumerate}
\end{prop}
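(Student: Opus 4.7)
The plan is to verify parts (a) and (b) in sequence, using the explicit cycle representative from Definition \ref{dfn:magnitude_cycle}, together with Lemma \ref{lem:existence_of_admissible_set} (for existence of admissible sets), Lemma \ref{lem:homologous_cyclces} (for homologous cycles under nested perturbations), and Assumption \ref{assumption:non_branching} to control the geometry along a single geodesic.

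For part (a), existence of at least one admissible set with $x^{f_i} \in f_i((0, \ell_i))$ and $x^{f'_i} \in f'_i((0, \ell_i))$ is the content of Lemma \ref{lem:existence_of_admissible_set}, so the candidate representative $\gamma(x^{f_1}, x^{f'_1}; \cdots; x^{f_q}, x^{f'_q})$ exists and is a $d^1$-cycle by the lemma immediately after Definition \ref{dfn:magnitude_cycle}. For independence of the choice, I would show that any two such admissible sets yield homologous cycles by altering one coordinate at a time. Fix an index $i$ and two choices $x, \tilde{x} \in f_i((0, \ell_i))$: by Assumption \ref{assumption:non_branching}, both lie on the unique geodesic $f_i$, hence are totally comparable between $\varphi_{i-1}$ and $\varphi_i$. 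The key technical point is that the set obtained by substituting $\tilde{x}$ for $x$ is still admissible; this reduces to checking that non-betweenness relations such as $x \not< \varphi_i \not< x_{i+1}$ are preserved under moving $x$ along $f_i$. I expect this to be a short triangle-inequality computation: if the moved coordinate violated non-betweenness, one could concatenate the geodesic segment along $f_i$ between $x$ and $\tilde{x}$ with a geodesic realizing the betweenness, and deduce that the original coordinate violated it as well. Given this, Lemma \ref{lem:homologous_cyclces} applies and yields the required homology; iterating over all $2q$ coordinates establishes well-definedness.

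For part (b), the antisymmetry formula $\Gamma(\cdots; f_i, f'_i; \cdots) = - \Gamma(\cdots; f'_i, f_i; \cdots)$ is essentially formal: the admissibility conditions in Definition \ref{dfn:magnitude_cycle}(a) are invariant under the simultaneous swap $x_i \leftrightarrow x'_i$, so a single admissible set represents both classes, and the second formula in the lemma following Definition \ref{dfn:magnitude_cycle} provides the sign change on the cycle representative directly. For the additivity identity, I would first produce a common admissible datum that simultaneously witnesses all three classes, namely points $p \in f_k((0, \ell_k))$, $p' \in f'_k((0, \ell_k))$, $p'' \in f''_k((0, \ell_k))$ together with common choices $x^{f_j} \in f_j((0, \ell_j))$ and $x^{f'_j} \in f'_j((0, \ell_j))$ for $j \neq k$, so that each of the three collections --- with $(p, p')$, $(p', p'')$, or $(p, p'')$ at position $k$ --- is admissible. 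The inductive construction in the proof of Lemma \ref{lem:existence_of_admissible_set} extends to accommodate three (or more) geodesics at a single index: the crucial step, that $x^j_k \not< \varphi_k \not< \varphi_{k+1}$, uses only the existence of two distinct geodesics at position $k+1$, which still holds, and the remaining freedom to place each $x^{j'}_{k+1}$ close to $\varphi_k$ on its own geodesic survives by continuity. Once such a common admissible datum is fixed, the additivity becomes formal: in the $k$-th slot of $\gamma$ one has $(p - p') + (p' - p'') = p - p''$, and the multi-linear notation of Remark \ref{rem:notation_in_noncommutative_polynomial} yields the desired identity on the nose at the level of cycle representatives, hence also in homology.

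The main obstacle I expect is the triangle-inequality argument that admissibility is preserved under moving a single coordinate along its geodesic; although elementary, it requires carefully tracking how the non-betweenness conditions at both $\varphi_{i-1}$ and $\varphi_i$ depend on the coordinate being moved, and applying Assumption \ref{assumption:non_branching} to conclude that the two positions of the coordinate are collinear on $f_i$. The remaining steps --- extending Lemma \ref{lem:existence_of_admissible_set} to three geodesics at a single index, the $d^1$-cycle property, and the antisymmetry --- follow either directly from the already established results or from straightforward bookkeeping with the explicit formula for $\gamma$.
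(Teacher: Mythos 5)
Your part (b) and the overall architecture match the paper, but the well-definedness argument in part (a) has a genuine gap. You propose to pass from one admissible set $\mathcal{X}$ to another $\mathcal{Y}$ by altering one coordinate at a time, asserting that admissibility is preserved under moving a single coordinate along its geodesic and that this reduces to a short triangle-inequality computation. That assertion is false as stated, because the non-betweenness conditions propagate in only one direction: if $\varphi_{i-1} < z < x < \varphi_i$ with $z, x$ on the same geodesic, then $x \not< \varphi_i \not< w$ does imply $z \not< \varphi_i \not< w$ (betweenness of the middle point $\varphi_i$ is inherited by inner pairs, so non-betweenness propagates outward), but the reverse implication fails, and likewise $u \not< \varphi_{i-1} \not< x$ does not yield $u \not< \varphi_{i-1} \not< z$ since $(u,z)$ is an inner pair for $\varphi_{i-1}$. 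Worse, the mixed sets arising in your interpolation must satisfy cross-conditions such as $x^{f_{i}} \not< \varphi_{i} \not< y^{f_{i+1}}$ relating a coordinate of $\mathcal{X}$ to a coordinate of $\mathcal{Y}$, and these simply do not follow from the admissibility of $\mathcal{X}$ and $\mathcal{Y}$ separately; hence Lemma \ref{lem:homologous_cyclces} cannot be applied to your intermediate steps.

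The paper repairs exactly this point by constructing a third admissible set $\mathcal{Z}$ with $\varphi_{i-1} < z^{f^j_i} < x^{f^j_i}$ and $\varphi_{i-1} < z^{f^j_i} < y^{f^j_i}$ for all $i, j$, built by a backward induction from $i = q$ down to $i = 1$ in which the conditions at each $\varphi_{i-1}$ are established from scratch using Assumption \ref{assumption:non_branching} and continuity, as in the proof of Lemma \ref{lem:existence_of_admissible_set}, rather than by propagation. One then interpolates $\mathcal{X} \to \mathcal{Z}$ and $\mathcal{Y} \to \mathcal{Z}$ one coordinate at a time; the intermediate sets are admissible precisely because every replacement moves a point toward $\varphi_{i-1}$ (the good direction for the conditions at $\varphi_i$), while the conditions at $\varphi_{i-1}$ are supplied by the admissibility of $\mathcal{Z}$ itself. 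Your treatment of part (b) --- antisymmetry at the level of cycle representatives and additivity via a common admissible datum accommodating three geodesics at position $k$, obtained by extending Lemma \ref{lem:existence_of_admissible_set} --- is correct and coincides with the paper's argument.
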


\begin{proof}
(a)
We write $\mathcal{X} = \{ x^{f^0_1}, x^{f^1_1}; \ldots, x^{f^0_q}, x^{f^1_q} \}$ for the given admissible set, and $\gamma(\mathcal{X})$ for the associated cycle. Let $\mathcal{Y} = \{ y^{f^0_1}, y^{f^1_1}; \ldots; y^{f^0_q}, y^{f^1_q} \}$ be another admissible set for $\varphi$ such that $y^{f^j_i} \in f^j_i((0, \ell_i))$ for $i = 1, \ldots, q$ and $j = 0, 1$. We will choose yet another admissible set $\mathcal{Z} = \{ z^{f^0_1}, z^{f^1_1}; \ldots; z^{f^0_q}, z^{f^1_q} \}$ for $\varphi$ such that $z^{f^j_i} \in f^j_i((0, \ell_i))$ for $i = 1, \ldots, q$ and $j = 0, 1$, and then prove that the cycle $\gamma(\mathcal{Z})$ is homologous to both $\gamma(\mathcal{X})$ and $\gamma(\mathcal{Y})$. 

We choose $\mathcal{Z}$ as follows: First, because $x^{f^j_q}$ and $y^{f^j_q}$ lie on the same geodesic $f^j_q$, we just choose $z^{f^j_q}$ so as to be $\varphi_{q-1} < z^{f^j_q} < x^{f^j_q}$ and $\varphi_{q-1} < z^{f^j_q} < y^{f^j_q}$ for $j = 0, 1$. Next, notice that Assumption \ref{assumption:non_branching} implies $\varphi_{q-2} \not< \varphi_{q-1} \not< z^{f^j_{q-1}}$ for $j = 0, 1$. We can then choose $z^{f^j_{q-1}}$ for $j = 0, 1$ such that
\begin{align*}
&\varphi_{q-2} < z^{f^j_{q-1}} < x^{f^j_{q-1}}, &
&\varphi_{q-2} < z^{f^j_{q-1}} < y^{f^j_{q-1}}
&z^{f^j_{q-1}} \not< \varphi_{q-1} \not< z^{f^{j'}_q}, &
\end{align*}
for $j, j' \in \{ 0, 1 \}$. We can iterate this choice, and eventually get an admissible set $\mathcal{Z}$ for $\varphi$ such that $\varphi_{i-1} < z^{f^j_i} < x^{f^j_i}$ and $\varphi_{i-1} < z^{f^j_i} < y^{f^j_i}$ for all $i = 1, \ldots, q$ and $j = 0, 1$. To prove that $\gamma(\mathcal{Z})$ and $\gamma(\mathcal{X})$ are homologous, observe that we have the following admissible sets for $\varphi$
\begin{align*}
\mathcal{W}_i
&=
\{ z^{f^0_1}, z^{f^1_1}; \ldots; z^{f^0_{i-1}}, z^{f^1_{i-1}};
z^{f^0_i}, x^{f^1_i}; x^{f^0_{i+1}}, x^{f^1_{i+1}}; \ldots;
x^{f^0_q}; x^{f^1_q} \}, \\
\mathcal{W}'_i
&=
\{ z^{f^0_1}, z^{f^1_1}; \ldots; z^{f^0_{i-1}}, z^{f^1_{i-1}};
z^{f^0_i}, z^{f^1_i}; x^{f^0_{i+1}}, x^{f^1_{i+1}}; \ldots;
x^{f^0_q}; x^{f^1_q} \}.
\end{align*}
By Lemma \ref{lem:homologous_cyclces}, the cycle $\gamma(\mathcal{W}_i)$ is homologous to $\gamma(\mathcal{W}'_i)$, and $\gamma(\mathcal{W}'_i)$ is homologous to $\gamma(\mathcal{W}_{i+1})$. Therefore $\gamma(\mathcal{X}) = \gamma(\mathcal{W}'_0)$ and $\gamma(\mathcal{Z}) = \gamma(\mathcal{W}'_q)$ are homologous. In the same way, we see that $\gamma(\mathcal{Y})$ is homologous to $\gamma(\mathcal{Z})$. 

(b)
The first formula is verified on the level of cycles. To show the second formula, we choose $x^{f^j_i} \in f^j_i((0, \ell_i))$ for $i = 1, \ldots, q$ and $j = 0, 1$ and $x^{f^2_k} \in f^2_i((0, \ell_k)$ so that
\begin{align*}
&\{ x^{f^0_1}, x^{f^1_1}; \cdots; x^{f^0_k}; x^{f^1_k}; \cdots;
x^{f^0_q}, x^{f^1_q}; \}, \\
&\{ x^{f^0_1}, x^{f^1_1}; \cdots; x^{f^1_k}; x^{f^2_k}; \cdots;
x^{f^0_q}, x^{f^1_q}; \}, \\
&\{ x^{f^0_1}, x^{f^1_1}; \cdots; x^{f^0_k}; x^{f^2_k}; \cdots;
x^{f^0_q}, x^{f^1_q}; \}
\end{align*}
are admissible sets for $\varphi$. This is possible by a generalization of the argument in the proof of Lemma \ref{lem:existence_of_admissible_set}. Then the formula is again verified on the level of cycles.
\end{proof}

\begin{rem}
Let $(X, d)$ be a geodesic space, and $\varphi = \langle \varphi_0, \varphi_1, \varphi_2 \rangle \in P^\ell_2(X)$ a proper $2$-chain of length $\ell > d(\varphi_0, \varphi_2)$. In this case, for any geodesics $f_1, f'_1 \in \Geod(\varphi_0, \varphi_1)$ and $f_2, f'_2 \in \Geod(\varphi_1, \varphi_2)$, we can always define $\Gamma(f_1, f'_1; f_2, f'_2) \in H^\ell_4(\varphi)$, without Assumption \ref{assumption:non_branching}. For this homology class, if $f_1$ is equivalent to $f'_1$, then it can happen that $\Gamma(f_1, f'_1; g_1, g'_2) = 0$ even if $f_1 \neq f'_1$ and $g_1 \neq g'_2$.
\end{rem}

\subsection{A description of higher magnitude homology}
\label{subsec:higher_magnitude_homology}

This subsection contains the proof of Theorem \ref{thm:main} (Theorem \ref{thm:complete_description_under_non_branching}). The proof will show that the only (possibly) non-trivial $E_2$-term of the smoothness spectral sequence for the magnitude homology $H^\ell_n(X)$ is 
$$
E^2_{q, q} 
= \bigoplus_{\varphi \in P_q(X)}
\bigoplus_{\stackrel{\ell_1, \ldots, \ell_q > 0}
{\ell_1 + \cdots + \ell_q = \ell}}
H^{\ell_1, \ldots, \ell_q}_{2q}(\varphi),
$$
and we identify the direct summand $H^{\ell_1, \ldots, \ell_q}_{2q}(\varphi)$ for $\varphi = \langle \varphi_0, \ldots, \varphi_q \rangle \in P_q(X)$ with
$$
H^{\ell_1, \ldots, \ell_q}_{2q}(\varphi)
=
\bigoplus_{f_i \neq \overline{f}_i}
\Z \Gamma(f_1, \overline{f}_1; \cdots; f_q; \overline{f}_q),
$$
where $f_i \in \Geod(\varphi_{i-1}, \varphi_i)$ runs over geodesics different from a reference geodesic $\overline{f}_i \in \Geod(\varphi_{i-1}, \varphi_i)$ for each $i = 1, \ldots, q$, and $\Gamma(f_1, \overline{f}_1; \cdots; f_q; \overline{f}_q)$ is represented by the following $2q$-chain introduced in \S\S\ref{sec:cycle_associated_to_geodesics}
$$
\gamma(x^{f_1}, x^{\overline{f}_1}; \cdots; x^{f_q}, x^{\overline{f}_q})
=
\langle 
\varphi_0, x^{f_1} - x^{\overline{f}_1},
\varphi_1, \cdots, 
\varphi_{q-1}, x^{f_q} - x^{\overline{f}_q}, \varphi_q
\rangle.
$$
Once this is shown (Proposition \ref{prop:determine_all_direct_summand}), we immediately get Theorem \ref{thm:main} (Theorem \ref{thm:complete_description_under_non_branching}). The proof of the above claim about the $E_2$-terms is to construct ``chainwise homotopy operators'' which relate a given element $\zeta \in C^{\ell_1, \ldots, \ell_q}_{p+q}(\varphi)$ with a linear combination of $\gamma(x^{f_1}, x^{\overline{f}_1}; \cdots; x^{f_q}, x^{\overline{f}_q})$. For this aim, we regard a linear combination of $\gamma(x^{f_1}, x^{\overline{f}_1}; \cdots; x^{f_q}, x^{\overline{f}_q})$ as ``standard'', and a linear combination of chains of the form
$$
\langle 
\varphi_0, x^{f_1} - x^{\overline{f}_1},
\varphi_1, \cdots, 
\varphi_{k-1}, x^{f_k} - x^{\overline{f}_k}, 
\varphi_k, x_1^{k+1}, \ldots, x^{k+1}_{m_{k+1}}, 
\varphi_{k+1}, \cdots,
\varphi_q
\rangle
$$
as an intermediate step which we call ``$k$-standard'' (Definition \ref{dfn:k_standard}). Then the ``chainwise homotopy operators'' are constructed by an induction: The initial step  (Lemma \ref{lem:induction_initial_step}) constructs ``chainwise homotopy operators'' which relate a given element $\zeta \in C^{\ell_1, \ldots, \ell_q}_{p+q}(\varphi)$ with a $1$-standard chain in the case that $\ell_1 = d(\varphi_0, \varphi_1)$. In the case that $\ell_1 > d(\varphi_0, \varphi_1)$, a null homotopy operator is constructed by applying the argument in \S\S\ref{subsec:E2_p1}. Then the inductive step (Lemma \ref{lem:induction_main_step}) constructs ``chainwise homotopy operators'' which relate a $k$-standard chain with a $(k+1)$-standard chain. The construction in this step is essentially the same with that in the initial step, up to a technical issue. To resolve this issue, we prove a lemma (Lemma \ref{lem:replacement_of_reference_points}) which is a replacement of points $x^{f}$ on geodesics $f$.

\bigskip

\begin{dfn} \label{dfn:k_standard}
Let $(X, d)$ be a geodesic space satisfying Assumption \ref{assumption:non_branching}, $q \ge 2$ an integer, $\ell_1,\ldots, \ell_q$ positive real numbers, and $\varphi = \langle \varphi_0, \cdots, \varphi_q \rangle \in \widehat{P}_q(X)$ a $q$-chain. Let $k$ be an integer such that $1 \le k \le q$. Suppose $d(\varphi_{i-1}, \varphi_i) = \ell_i$ for $i = 1, \ldots, k$. Suppose also that geodesics $\overline{f}_i \in \Geod(\varphi_{i-1}, \varphi_i)$ are given for $i = 1, \ldots, k$. We say that an element $\zeta \in C^{\ell_1, \ldots, \ell_q}_{p+q}(\varphi)$ is \textit{$k$-standard} if it has the following expression
\begin{multline*}
\zeta 
= \sum_{f_1, \ldots, f_k}
\sum_{\alpha}
N^{f_1, \ldots, f_k}_\alpha
\langle 
\varphi_0, x^{f_1} - x^{\overline{f}_1}, \varphi_1, 
\cdots \\
\cdot\cdot, 
\varphi_{k-1}, x^{f_k} - x^{\overline{f}_k}, \varphi_k,
x^{\alpha, k+1}_1, \ldots, x^{\alpha, k+1}_{m^\alpha_{k+1}}, \varphi_{k+1},
\cdots,
\varphi_{q-1}, x^{\alpha, q}_1, \ldots, x^{\alpha, q}_{m^\alpha_q}, \varphi_q
\rangle.
\end{multline*}
In the sum above,
\begin{itemize}
\item
$f_i$ runs over a finite set in $\Geod(\varphi_{i-1}, \varphi_i)$ such that $f_i \neq \overline{f}_i$ for $i = 1, \ldots, k$,

\item
$\alpha$ runs over a finite set.
\end{itemize}
The points $x^{f_i}$ and $x^{\overline{f}_i}$ lie on $f_i$ and $\overline{f}_i$, respectively, so that they form an admissible set $\{ x^{f_1}, x^{\overline{f}_1}; \ldots; x^{f_k}, x^{\overline{f}_k} \}$ for the subchain $\langle \varphi_0, \cdots, \varphi_k \rangle$. The integers $m^\alpha_{k+1}, \ldots, m^\alpha_q$ are non-negative and satisfy $k + m^\alpha_{k+1} + \cdots + m^\alpha_q = p$. That $m^\alpha_j = 0$ means there is no point between $\varphi_{j-1}$ and $\varphi_j$. 
\end{dfn}

When $p > q$, we regard a $q$-standard element $\zeta \in C^{\ell_1, \ldots, \ell_q}_{p+q}(\varphi)$ as trivial.

\begin{lem} \label{lem:induction_initial_step}
Let $(X, d)$ be a geodesic space satisfying Assumption \ref{assumption:non_branching}, $q \ge 2$ an integer, $\ell_1,\ldots, \ell_q$ positive real numbers, and $\varphi = \langle \varphi_0, \cdots, \varphi_q \rangle \in \widehat{P}_q(X)$ a $q$-chain. Suppose that we are given an element $\zeta \in C^{\ell_1, \ldots, \ell_q}_{p+q}(\varphi)$.
\begin{itemize}
\item[(a)]
If $\ell_1 > d(\varphi_0, \varphi_1)$, then there are $H\zeta \in C^{\ell_1, \ldots, \ell_q}_{p+q+1}(\varphi)$ and $Hd^1\zeta \in C^{\ell_1, \ldots, \ell_q}_{p+q}(\varphi)$ such that $d^1H\zeta + Hd^1\zeta = - \zeta$.

\item[(b)]
If $\ell_1 = d(\varphi_0, \varphi_1)$, then there are $H\zeta \in C^{\ell_1, \ldots, \ell_q}_{p+q+1}(\varphi)$ and $Hd^1\zeta \in C^{\ell_1, \ldots, \ell_q}_{p+q}(\varphi)$ such that $d^1H\zeta + Hd^1\zeta = \pi\zeta - \zeta$, where $\pi\zeta \in C^{\ell_1, \ldots, \ell_q}_{p+q}(\varphi)$ is $1$-standard. In particular, $\pi\zeta = 0$ if $p = 0$.

\end{itemize}
\end{lem}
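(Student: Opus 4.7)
Both parts are proved by constructing an explicit chain homotopy $H$ on $C^{\ell_1,\ldots,\ell_q}_{*+q}(\varphi)$ that inserts a point near $\varphi_0$, directly extending the strategy of Lemma~\ref{lem:local_homotopy_for_E2_p1}. The inserted point must be smooth, sufficiently close to $\varphi_0$ to preserve the smoothness of subsequent points, and chosen coherently across those chains in the support of $\zeta$ which are related by removing a single interior smooth point; coherence is arranged, as in Lemma~\ref{lem:consistent_choice_of_base_points}, by forming the equivalence relation on the initial tuples $\F_3(\zeta)\sqcup\F_2(\zeta)$ of Definition~\ref{dfn:triples_and_pairs} and invoking Assumption~\ref{assumption:non_branching} together with Lemma~\ref{lem:a_strong_Menger_convexity} to choose candidate points on a common geodesic segment issuing from $\varphi_0$.

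For (a), the hypothesis $\ell_1>d(\varphi_0,\varphi_1)$ forces every chain $\gamma=\langle\varphi_0,x_1^1,\ldots,x_{n_1-1}^1,\varphi_1,\ldots\rangle$ in the support of $\zeta$ to satisfy $n_1\ge 2$, and we choose $\overline{x}^\gamma$ on the unique geodesic joining $\varphi_0$ to $x_1^1$, near enough to $\varphi_0$ that $H\gamma:=\langle\varphi_0,\overline{x}^\gamma,x_1^1,\ldots,\varphi_q\rangle$ is a valid element of $C^{\ell_1,\ldots,\ell_q}_{p+q+1}(\varphi)$. The computation of $d^1H\zeta+Hd^1\zeta$ is then formally identical to the one in Lemma~\ref{lem:local_homotopy_for_E2_p1}: the $(-1)^1\partial_1$-contribution recovers $-\zeta$, while equivalence-class coherence of the $\overline{x}^{(\cdot)}$'s makes the remaining $\partial_i$-contributions cancel pairwise, yielding $d^1H\zeta+Hd^1\zeta=-\zeta$.

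For (b), the hypothesis $\ell_1=d(\varphi_0,\varphi_1)$ forces all smooth points of the first chunk of each $\gamma$ to lie on a unique geodesic $f^\gamma\in\Geod(\varphi_0,\varphi_1)$ (by Assumption~\ref{assumption:non_branching}). Fix once and for all a reference point $x^f\in f((0,\ell_1))$ for each $f\in\Geod(\varphi_0,\varphi_1)$. The insertion is now bifurcated: if $n_1\ge 2$ we take $H\gamma=\langle\varphi_0,y^\gamma,x_1^1,\ldots,\varphi_1,\ldots\rangle$ for some $y^\gamma\in f^\gamma$ with $\varphi_0<y^\gamma<x_1^1$, and if $n_1=1$ we take $H\gamma=\langle\varphi_0,x^{\overline{f}_1},\varphi_1,\ldots\rangle$. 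A direct computation as in (a) shows that the $\partial_i$-contributions from $d^1H\gamma$ and $Hd^1\gamma$ cancel pairwise except that, when $n_1=2$, a residual $\langle\varphi_0,y^\gamma-x^{\overline{f}_1},\varphi_1,x_1^{\alpha,2},\ldots,\varphi_q\rangle$ survives (when $n_1\ge 3$ or $n_1=1$ the residuals cancel identically, as one checks by the alternating-sign telescoping familiar from the computation for $E^2_{2,1}$). Absorbing the slide from $y^\gamma$ to the pre-chosen $x^{f^\gamma}$ into $H\zeta$---a $d^1$-boundary of a $(p+q+1)$-chain lying entirely on $f^\gamma$---the aggregate of these residuals assembles into a $1$-standard element $\pi\zeta$, and we obtain $d^1H\zeta+Hd^1\zeta=\pi\zeta-\zeta$. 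When $f^\gamma=\overline{f}_1$ the two halves of a residual cancel (take $y^\gamma=x^{\overline{f}_1}$), and when $p=0$ every chain in $\zeta$ has $n_1=1$, so only $\overline{f}_1$-type insertions appear and $\pi\zeta=0$.

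The main technical obstacle is the coherent-choice bookkeeping in (b): the switch between inserting on $f^\gamma$ and on $\overline{f}_1$ depending on whether the first chunk of $\gamma$ contains a smooth point is essential both to keep $H\gamma$ inside $C^{\ell_1,\ldots,\ell_q}_{*+q}(\varphi)$ (so that the inserted point remains smooth and the frame does not shift) and to have the residuals assemble into the $1$-standard subcomplex rather than into an element with a different frame. It is precisely here that Assumption~\ref{assumption:non_branching} enters, by making the geodesic $f^\gamma$ an invariant of $\gamma$ and hence well-defined across all chains sharing the same first-chunk smooth data.
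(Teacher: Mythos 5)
Your proposal is correct and follows essentially the same route as the paper: part (a) by the insertion-of-a-point homotopy of Lemmas \ref{lem:replacable_by_chains_starting_with_non_4_cut}, \ref{lem:consistent_choice_of_base_points} and \ref{lem:local_homotopy_for_E2_p1} applied to the first chunk, and part (b) by splitting $\zeta$ according to the number of points between $\varphi_0$ and $\varphi_1$, inserting a coherently chosen point on the (unique, by Assumption \ref{assumption:non_branching}) geodesic carrying that chunk, with the exactly-one-point piece producing the $1$-standard residue $\pi\zeta$. The bifurcated insertion and the vanishing of $\pi\zeta$ for $p=0$ match the paper's argument.
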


\begin{proof}
(a)
In this case, let us express $\zeta$ as
$$
\zeta = \sum_\alpha N_\alpha
\langle \varphi_0, x^{\alpha, 1}_1, \ldots, x^{\alpha, 1}_{m^\alpha_1}, 
\varphi_1, \cdots \rangle.
$$
Note that $m^\alpha_1 \ge 2$ for all $\alpha$. Applying the argument in the proof of Lemma \ref{lem:replacable_by_chains_starting_with_non_4_cut} and Lemma \ref{lem:local_homotopy_for_E2_p1} to the subchains $\langle \varphi_0, x^{\alpha, 1}_1, \ldots, x^{\alpha, 1}_{m^\alpha_1}, \varphi_1 \rangle \in C^{\ell_1}_{m^\alpha_1 + 1}(\langle \varphi_0, \varphi_1 \rangle)$, we can construct $H\zeta \in C^{\ell_1, \ldots, \ell_q}_{p+q+1}(\varphi)$ and $Hd^1\zeta \in C^{\ell_1, \ldots, \ell_q}_{p+q}(\varphi)$ such that $d^1H\zeta + Hd^1\zeta = - \zeta$.

(b)
We can express $\zeta$ as $\zeta = \zeta_0 + \zeta_1 + \cdots$, where
\begin{align*}
\zeta_0 &=
\sum_{\alpha_0} N_{\alpha_0}
\langle \varphi_0, \varphi_1, 
x^{\alpha_0, 2}_1, \ldots, x^{\alpha_0, 2}_{m^{\alpha_0}_2}, \varphi_2, 
\cdots \rangle, \\
\zeta_j &= 
\sum_{\alpha_j} N_{\alpha_j}
\langle \varphi_0, x^{\alpha_j, 1}_1, \ldots, x^{\alpha_j, 1}_j, \varphi_1, 
x^{\alpha_j, 2}_1, \ldots, x^{\alpha_j, 2}_{m^{\alpha_j}_2}, \varphi_2, 
\cdots \rangle,
\end{align*}
where $\alpha_j$ runs over a finite set $\A_j$ for each $j$. If $j \ge 1$, then the points $x_1^{\alpha_1}, \ldots, x_j^{\alpha_j}$ lie on a geodesic joining $\varphi_0$ to $\varphi_1$, which is unique under Assumption \ref{assumption:non_branching}. Classifying the chains according to the geodesics joining $\varphi_0$ to $\varphi_1$, we can express $\zeta_j$ as
$$
\zeta_j =
\sum_f \sum_{\beta_j}
N^f_{\beta_j}
\langle 
\varphi_0, x^{f, \beta_j, 1}_1, \ldots, x^{f, \beta_j, 1}_j, \varphi_1, 
x^{f, \beta_j, 2}_1, \ldots, x^{f, \beta_j, 2}_{m^{\beta_j}_2}, 
\varphi_2, \cdots
\rangle,
$$
where $f$ runs over a finite set $\F_j \subset \Geod(\varphi_0, \varphi_1)$, $\beta_j$ over a finite set $\B_j$, and the points $x^{f, \beta_j, 1}_1, \ldots, x^{f, \beta_j, 1}_j$ lie on $f$. We put $\F = \{ \overline{f} \} \cup \bigcup_j \F_j$, which is also a finite subset in $\Geod(\varphi_0, \varphi_1)$. Here $\overline{f} \in \Geod(\varphi_0, \varphi_1)$ is a geodesic chosen as a reference, which may already be included in some $\F_j$. For each $f \in \F$, we can choose $\overline{x}^f \in f((0, \ell_1))$ such that $\varphi_0 < \overline{x}^f < x^{f, \beta, 1}_1$ for all $\beta \in \bigcup_{j \ge 1}\B_j$ and $\overline{x}^f \not< \varphi_1 \not< x^{\alpha_0, 2}_1$ for all $\alpha_0 \in \A_0$, where $x^{\alpha_0, 2}_1 = \varphi_2$ if $m^{\alpha_0}_2 = 0$. From $\zeta \in C^{\ell_1, \ldots, \ell_q}_{p+q}(\varphi)$, it follows that $\overline{x}^f \not< \varphi_1 \not< x^{f, \beta, 2}_1$ for all $f \in \F$ and $\beta \in \bigcup_{j \ge 1}\B_j$. Now, we put
\begin{align*}
H\zeta_0
&=
\sum_{\alpha_0} N_{\alpha_0}
\langle \varphi_0, \overline{x}^f, \varphi_1, 
x^{\alpha_0, 2}_1, \ldots, x^{\alpha_0, 2}_{m^{\alpha_0}_2}, \varphi_2, 
\cdots \rangle, \\
H\zeta_j &=
\sum_f \sum_{\beta_j}
N^f_{\beta_j}
\langle 
\varphi_0, \overline{x}^f, 
x^{f, \beta_j, 1}_1, \ldots, x^{f, \beta_j, 1}_j, \varphi_1, 
x^{f, \beta_j, 2}_1, \ldots, x^{f, \beta_j, 2}_{m^{\beta_j}_2}, 
\varphi_2, \cdots
\rangle,
\end{align*}
for $j \ge 1$. Then $H\zeta = H\zeta_0 + H\zeta_1 + \cdots \in C^{\ell_1, \ldots, \ell_q}_{p+q+1}(\varphi)$. The boundary $d^1\zeta$ is a linear combination of the chains of the following forms
\begin{align*}
&\langle \varphi_0, \varphi_1, x^2_1, \cdots \rangle, &
&\langle \varphi_0, x^f_1, \ldots, x^f_j, \varphi_1, 
x^2_1, \cdots \rangle,
\end{align*}
where $x^f_1, \ldots, x^f_j$ are on a geodesic $f \in \F$. For these chains appearing in $d^1\zeta$, we can make sense of the following chains, thanks to the nature of $\overline{x}^f$, 
\begin{align*}
H\langle \varphi_0, \varphi_1, x^2_1, \cdots, \rangle
&=
\langle \varphi_0, \overline{x}^{\overline{f}}, \varphi_1, x^2_1, 
\cdots \rangle, \\
H\langle \varphi_0, x^f_1, \ldots, x^f_j, \varphi_1, 
x^2_1, \cdots \rangle
&=
\langle \varphi_0, \overline{x}^f, x^f_1, \ldots, x^f_j, \varphi_1, 
x^2_1, \cdots \rangle.
\end{align*}
As a linear extension, we get $Hd^1\zeta = Hd^1\zeta_0 + Hd^1\zeta_1 + \cdots \in C^{\ell_1, \ldots, \ell_q}_{p+q}(\varphi)$. By direct computations, we find $d^1H\zeta_j + Hd^1\zeta_j = - \zeta_j$ for $j \neq 1$. We also find $d^1H\zeta_1 + Hd^1\zeta_1 = \pi\zeta_1 - \zeta_1$, where
$$
\pi\zeta_1
=
\sum_{f \neq \overline{f}} \sum_{\beta_1}
N^f_{\beta_1}
\langle 
\varphi_0, \overline{x}^f - \overline{x}^{\overline{f}}, \varphi_1, 
x^{f, \beta_1, 2}_1, \ldots, x^{f, \beta_1, 2}_{m^{\beta_1}_2}, 
\varphi_2, \cdots
\rangle.
$$
Clearly, $\pi\zeta = \pi\zeta_1$ is $1$-standard. If $p = 0$, then $\zeta = \zeta_0$, so that $\pi\zeta = 0$. 
\end{proof}

\begin{lem} \label{lem:replacement_of_reference_points}
In the setup of Definition \ref{dfn:k_standard}, suppose that we have points $\overline{x}^{f_i}$ on $f_i$ and $\overline{x}^{\overline{f}_i}$ on $\overline{f}_i$ such that $\varphi_{i-1} < \overline{x}^{f_i} < x^{f_i}$ and $\varphi_{i-1} < \overline{x}^{\overline{f}_i} < x^{\overline{f}_i}$ for $i = 1, \ldots, k$ and the set $\{ \overline{x}^{f_1}, \overline{x}^{\overline{f}_1}; \ldots; \overline{x}^{f_k}, \overline{x}^{\overline{f}_k} \}$ is admissible for $\langle \varphi_0, \cdots, \varphi_k \rangle$. Then there are $H'\zeta \in C^{\ell_1, \ldots, \ell_q}_{p+q+1}(\varphi)$ and $H'd^1\zeta \in C^{\ell_1, \ldots, \ell_q}_{p+q}(\varphi)$ such that $d^1H'\zeta + H'd^1\zeta = \overline{\zeta} - \zeta$, where
\begin{multline*}
\overline{\zeta} =
\sum_{f_1, \ldots, f_k}
\sum_{\alpha}
N^{f_1, \ldots, f_k}_\alpha
\langle 
\varphi_0, \overline{x}^{f_1} - \overline{x}^{\overline{f}_1}, \varphi_1, 
\cdots \\
\cdot\cdot, 
\varphi_{k-1}, \overline{x}^{f_k} - \overline{x}^{\overline{f}_k}, \varphi_k,
x^{\alpha, k+1}_1, \ldots, x^{\alpha, k+1}_{m^\alpha_{k+1}}, \varphi_{k+1},
\cdots,
\varphi_{q-1}, x^{\alpha, q}_1, \ldots, x^{\alpha, q}_{m^\alpha_q}, \varphi_q
\rangle.
\end{multline*}
\end{lem}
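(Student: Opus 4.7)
The plan is to realize $\overline\zeta - \zeta$ as a telescoping sum of $k$ elementary chain homotopies, one per index, obtained by the ``insertion of a point'' trick already used in Lemma~\ref{lem:homologous_cyclces}. For each basis element $\gamma$ of the $k$-standard expression of $\zeta$, I introduce the partially replaced chains $\gamma^{(0)} = \gamma, \gamma^{(1)}, \ldots, \gamma^{(k)} = \overline\gamma$, where $\gamma^{(i)}$ is obtained from $\gamma$ by swapping $x^{f_j}, x^{\overline{f}_j}$ for $\overline{x}^{f_j}, \overline{x}^{\overline{f}_j}$ for $j = 1, \ldots, i$, and set $\zeta^{(i)} = \sum_\gamma N_\gamma \gamma^{(i)}$. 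The length is unchanged at each step since the new and old points lie on the same geodesic, and the admissibility of $\{\overline{x}^{f_1}, \overline{x}^{\overline{f}_1}; \ldots; \overline{x}^{f_k}, \overline{x}^{\overline{f}_k}\}$, combined with the original admissibility encoded in $\zeta \in C^{\ell_1, \ldots, \ell_q}_{p+q}(\varphi)$, is used to check that each $\gamma^{(i)}$ still lies in $C^{\ell_1, \ldots, \ell_q}_{p+q}(\varphi)$ (this is where the ``mixed'' betweenness relations at $\varphi_{i-1}$ must be handled).

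The elementary step is a linear operator $H_i$ acting on $\zeta^{(i-1)}$: for a basis chain $\gamma^{(i-1)}$, let $y_i$ denote the current point in the $i$-th block and $\overline{y}_i$ its designated replacement, and define
\[
H_i \gamma^{(i-1)} \;=\; \pm \bigl\langle \ldots, \varphi_{i-1}, \overline{y}_i, y_i, \varphi_i, \ldots \bigr\rangle,
\]
extended linearly. Since $\varphi_{i-1} < \overline{y}_i < y_i < \varphi_i$ all lie on the same geodesic (by hypothesis) and the admissibility hypotheses keep each $\varphi_j$ singular, this inserted chain is in $C^{\ell_1, \ldots, \ell_q}_{p+q+1}(\varphi)$. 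Computing $d^1 H_i \gamma^{(i-1)}$, three types of terms appear: removal of the inserted $\overline{y}_i$ returns $-\gamma^{(i-1)}$; removal of $y_i$ returns $\gamma^{(i)}$; and removals of smooth points in other blocks, because those blocks are separated from the $i$-th by the singular $\varphi_j$'s, commute with the $i$-th insertion up to a uniform sign and may be identified with $-H_i$ applied to $d^1 \gamma^{(i-1)}$. Summing over basis chains yields
\[
d^1 H_i \zeta^{(i-1)} + H_i d^1 \zeta^{(i-1)} \;=\; \zeta^{(i)} - \zeta^{(i-1)}.
\]

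Telescoping over $i = 1, \ldots, k$, I set $H\zeta := \sum_{i=1}^k H_i \zeta^{(i-1)} \in C^{\ell_1, \ldots, \ell_q}_{p+q+1}(\varphi)$ and $Hd^1\zeta := \sum_{i=1}^k H_i d^1\zeta^{(i-1)} \in C^{\ell_1, \ldots, \ell_q}_{p+q}(\varphi)$, which produces the claimed identity $d^1 H\zeta + Hd^1\zeta = \zeta^{(k)} - \zeta^{(0)} = \overline\zeta - \zeta$. The main obstacle I expect is the verification that each intermediate $\gamma^{(i)}$ and each inserted chain $H_i\gamma^{(i-1)}$ really lie in the subspace where every $\varphi_j$ is singular: the betweenness condition at $\varphi_{i-1}$ involves its new right neighbour $\overline{y}_i$ together with the current left neighbour (which is $\overline{y}_{i-1}$ in $\gamma^{(i-1)}$), and must be extracted from the given admissibility together with Assumption~\ref{assumption:non_branching}, exploiting that $\overline{y}_{i-1}$ and $y_{i-1}$ lie on the same geodesic through $\varphi_{i-1}$ so that non-branching forces the relevant betweenness relations to propagate. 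Once this is established, the sign bookkeeping is identical to the one-step calculation of Lemma~\ref{lem:homologous_cyclces} iterated $k$ times.
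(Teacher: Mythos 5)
Your proposal is correct and follows essentially the same route as the paper: the paper's operator $H_j$ is exactly your one-block insertion applied to the partially replaced chain $\zeta^{(j-1)}$ (first $j-1$ blocks barred, block $j$ carrying the inserted pair $\overline{x}^{f_j}, x^{f_j}$ minus $\overline{x}^{\overline{f}_j}, x^{\overline{f}_j}$, later blocks untouched), and the telescoping sum over $j=1,\ldots,k$ gives $d^1H\zeta + Hd^1\zeta = \overline{\zeta}-\zeta$ exactly as you describe. The one point you flag as a worry — the mixed betweenness relations such as $\overline{x}^{f_i} \not< \varphi_i \not< x^{f_{i+1}}$ needed to keep the intermediate chains in $C^{\ell_1,\ldots,\ell_q}_{p+q}(\varphi)$ — is indeed the only thing to check, and it follows from the admissibility of the barred set together with $\varphi_i < \overline{x}^{f_{i+1}} < x^{f_{i+1}}$ by a purely metric argument (as in the first part of the proof of Lemma \ref{lem:geodesic_from_x_y_z}), without needing Assumption \ref{assumption:non_branching} at this step.
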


\begin{proof}
Notice that $\overline{\zeta} \in C^{\ell_1, \ldots, \ell_q}_{p+q}(\varphi)$ makes sense because of the assumptions $\varphi_{k-1} < \overline{x}^{f_k} < x^{f_k}$ and $\varphi_{k-1} < \overline{x}^{\overline{f}_k} < x^{\overline{f}_k}$. Notice also that we have
\begin{align*}
&\overline{x}^{f_i} \not< \varphi_i \not< x^{f_{i+1}}, &
&\overline{x}^{f_i} \not< \varphi_i \not< x^{\overline{f}_{i+1}}, \\
&\overline{x}^{\overline{f}_i} \not< \varphi_i \not< x^{f_{i+1}}, &
&\overline{x}^{\overline{f}_i} \not< \varphi_i \not< x^{\overline{f}_{i+1}},
\end{align*}
for $i = 1, \ldots, k-1$. The elements $\zeta$ and $d^1\zeta$ are linear combinations of the elements of the following form
\begin{align*}
\xi =
\langle 
\varphi_0, x^{f_1} - x^{\overline{f}_1}, \varphi_1, 
\cdots, \varphi_{k-1}, x^{f_k} - x^{\overline{f}_k}, \varphi_k,
x^{k+1}_1, \cdots
\rangle.
\end{align*}
For such an element $\xi$ and $j = 1, \ldots, k$, we define $H'_j\xi$ by
\begin{align*}
H'_j\xi
&=
\langle \varphi_0, 
\overline{x}^{f_1} - \overline{x}^{\overline{f}_1},
\varphi_1, \cdots, 
\varphi_{i-2},
\overline{x}^{f_{i-1}} - \overline{x}^{\overline{f}_{i-1}}, \\
&\quad
\varphi_{i-1}, 
\overline{x}^{f_i}, x^{f_i}, 
\varphi_i, 
x^{f_{i+1}} - x^{\overline{f}_{i+1}},
\varphi_{i+1},
\cdots, \varphi_{k-1}, x^{f_k} - x^{\overline{f}_k}, \varphi_k,
x^{k+1}_1, \cdots
\rangle \\
&-
\langle \varphi_0, 
\overline{x}^{f_1} - \overline{x}^{\overline{f}_1},
\varphi_1, \cdots, 
\varphi_{i-2},
\overline{x}^{f_{i-1}} - \overline{x}^{\overline{f}_{i-1}}, \\
&\quad
\varphi_{i-1}, 
\overline{x}^{\overline{f}_i}, x^{\overline{f}_i}, 
\varphi_i, 
x^{f_{i+1}} - x^{\overline{f}_{i+1}},
\varphi_{i+1},
\cdots, \varphi_{k-1}, x^{f_k} - x^{\overline{f}_k}, \varphi_k,
x^{k+1}_1, \cdots
\rangle.
\end{align*}
We now get $H'_j\zeta \in C^{\ell_1, \ldots, \ell_q}_{p+q+1}(\varphi)$ and $H'_jd^1\zeta \in C^{\ell_1, \ldots, \ell_q}_{p+q}(\varphi)$ by extending the definition above linearly. For the elements $H'\zeta = \sum_j H'_j\zeta$ and $H'd^1\zeta = \sum_j H'_jd^1\zeta$, we can verify $d^1H'\zeta + H'd^1\zeta = \overline{\zeta} - \zeta$. (cf.\ the proof of Lemma \ref{lem:homologous_cyclces})
\end{proof}

\begin{lem} \label{lem:induction_main_step}
Let $(X, d)$ be a geodesic space satisfying Assumption \ref{assumption:non_branching}, $q \ge 2$ an integer, $\ell_1,\ldots, \ell_q$ positive real numbers, and $\varphi = \langle \varphi_0, \cdots, \varphi_q \rangle \in \widehat{P}_q(X)$ a $q$-chain. Suppose that an element $\zeta \in C^{\ell_1, \ldots, \ell_q}_{p+q}(\varphi)$ is $k$-standard, where $1 \le k \le q-1$.
\begin{itemize}
\item[(a)]
If $\ell_{k+1} > d(\varphi_k, \varphi_{k+1})$, then there are $H\zeta \in C^{\ell_1, \ldots, \ell_q}_{p+q+1}(\varphi)$ and $Hd^1\zeta \in C^{\ell_1, \ldots, \ell_q}_{p+q}(\varphi)$ such that $d^1H\zeta + Hd^1\zeta = - \zeta$.

\item[(b)]
If $\ell_{k+1} = d(\varphi_k, \varphi_{k+1})$, then there are $H\zeta \in C^{\ell_1, \ldots, \ell_q}_{p+q+1}(\varphi)$ and $Hd^1\zeta \in C^{\ell_1, \ldots, \ell_q}_{p+q}(\varphi)$ such that $d^1H\zeta + Hd^1\zeta = \pi\zeta - \zeta$, where $\pi\zeta \in C^{\ell_1, \ldots, \ell_q}_{p+q}(\varphi)$ is $(k+1)$-standard. In particular, $\pi\zeta = 0$ if $p = k$.

\end{itemize}
\end{lem}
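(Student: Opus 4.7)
The plan is to run an inductive version of Lemma \ref{lem:induction_initial_step}, now acting on the subchain between $\varphi_k$ and $\varphi_{k+1}$ rather than on the initial segment. The already-formed cycle prefix $\langle \varphi_0, x^{f_1} - x^{\overline{f}_1}, \varphi_1, \cdots, \varphi_{k-1}, x^{f_k} - x^{\overline{f}_k}, \varphi_k \rangle$ is passive: the homotopy $H$ will alter only the $(k+1)$-st slot, prepending a point that depends on the geodesic class occupying that slot.

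For part (a), with $\ell_{k+1} > d(\varphi_k, \varphi_{k+1})$, I would apply the reduction of Lemma \ref{lem:replacable_by_chains_starting_with_non_4_cut} followed by the local homotopy of Lemma \ref{lem:local_homotopy_for_E2_p1} to the subchain $\langle \varphi_k, x^{\alpha, k+1}_1, \ldots, x^{\alpha, k+1}_{m^\alpha_{k+1}}, \varphi_{k+1} \rangle$ in each summand of $\zeta$. The inserted point is chosen close to $\varphi_k$ on the appropriate geodesic (available by Lemma \ref{lem:a_strong_Menger_convexity}); the computation of $d^1 H\zeta + H d^1\zeta = -\zeta$ is then identical in shape to that in the cited lemmas.

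For part (b), with $\ell_{k+1} = d(\varphi_k, \varphi_{k+1})$, I would mirror Lemma \ref{lem:induction_initial_step}(b). Decompose $\zeta = \zeta_0 + \zeta_1 + \cdots$ by the number $j$ of interior points between $\varphi_k$ and $\varphi_{k+1}$; for $j \ge 1$ those points lie on a single geodesic $f \in \Geod(\varphi_k, \varphi_{k+1})$, so $\zeta_j$ admits a further classification by $f$. Let $\F \subset \Geod(\varphi_k, \varphi_{k+1})$ be the resulting finite set, augmented by $\overline{f}_{k+1}$, and for each $f \in \F$ choose $\overline{x}^f \in f((0, \ell_{k+1}))$ close enough to $\varphi_k$ to satisfy the relevant non-straightness relations (see below). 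Define $H$ by prepending $\overline{x}^f$ to the subchain beginning at $\varphi_k$ in each classified summand, extend linearly, and copy the term-by-term calculation of Lemma \ref{lem:induction_initial_step}(b). Summands with $j \ne 1$ cancel against $Hd^1\zeta$ to give $-\zeta_j$; the $j = 1$ contributions produce the residue
\[
\pi\zeta = \sum_{f_1, \ldots, f_k} \sum_{f \neq \overline{f}_{k+1}} \sum_{\beta} N^{f_1, \ldots, f_k, f}_{\beta} \langle \varphi_0, x^{f_1} - x^{\overline{f}_1}, \varphi_1, \cdots, \varphi_k, \overline{x}^f - \overline{x}^{\overline{f}_{k+1}}, \varphi_{k+1}, \cdots \rangle,
\]
which is manifestly $(k+1)$-standard. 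When $p = k$, all $m^\alpha_i$ with $i > k$ vanish, so $\zeta = \zeta_0$ and $\pi\zeta = 0$.

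The main obstacle is the compatibility of the new insertion $\overline{x}^f$ with the existing cycle prefix at $\varphi_k$: one must guarantee $x^{f_k} \not< \varphi_k \not< \overline{x}^f$ and likewise with $\overline{f}_k$ replacing $f_k$, so that $\varphi_k$ remains singular and the output truly lies in $C^{\ell_1, \ldots, \ell_q}_{p+q+1}(\varphi)$. The existing $k$-standard admissibility only supplies $x^{f_k} \not< \varphi_k \not< x^{\alpha, k+1}_1$, so the point $\overline{x}^f$ interpolated between $\varphi_k$ and $x^{\alpha, k+1}_1$ requires an independent argument. I would resolve this exactly as in the inductive step of Lemma \ref{lem:existence_of_admissible_set}: if the putative relation $x^{f_k} < \varphi_k < \overline{x}^f$ held for some $f \in \F$, then concatenating with the remaining portion of $f$ and comparing against the analogous concatenation through $\overline{f}_{k+1}$ would yield two distinct geodesics from $x^{f_k}$ to $\varphi_{k+1}$ sharing $\varphi_k$ as an interior point, contradicting Assumption \ref{assumption:non_branching}.
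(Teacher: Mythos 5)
Your overall architecture (decompose by the number of points following $\varphi_k$, classify by the geodesic carrying them, insert a reference point $\overline{x}^f$ near $\varphi_k$, and run the computation of Lemma \ref{lem:induction_initial_step}) matches the paper's, and you correctly isolate the crux: one must ensure $x^{f_k} \not< \varphi_k \not< \overline{x}^f$ so that $\varphi_k$ stays singular. But your proposed resolution of that crux is wrong. The argument you borrow from Lemma \ref{lem:existence_of_admissible_set} derives its contradiction from the existence of \emph{two distinct} geodesics in $\Geod(\varphi_k, \varphi_{k+1})$: the relation $x^{f_k} < \varphi_k < \varphi_{k+1}$ is excluded there precisely because concatenation through $f^0_{k+1}$ and through $f^1_{k+1}$ would produce two different geodesics sharing $\varphi_k$. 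In the present lemma nothing guarantees more than one geodesic from $\varphi_k$ to $\varphi_{k+1}$; the finite set $\F$ you extract from $\zeta$ may be a single geodesic equal to $\overline{f}_{k+1}$, and then there is no second concatenation to compare against. The bad relation can genuinely occur: take $X = S^1$, $\varphi_{k-1}$ and $\varphi_k$ antipodal (so $f_k \neq \overline{f}_k$ exist, as $k$-standardness requires), and $\varphi_{k+1}$ at distance less than $\pi r$ from $\varphi_k$, so the geodesic $f$ from $\varphi_k$ to $\varphi_{k+1}$ is unique. One can arrange $x^{f_k} \not< \varphi_k \not< x^{\alpha,k+1}_1$ for the points actually occurring in $\zeta$, yet any $\overline{x}^f$ chosen sufficiently close to $\varphi_k$ satisfies $x^{f_k} < \varphi_k < \overline{x}^f$ (the function $y \mapsto d(x^{f_k},\varphi_k)+d(\varphi_k,y)-d(x^{f_k},y)$ is non-decreasing along $f$ away from $\varphi_k$, so it can vanish near $\varphi_k$ even though it is positive at $x^{\alpha,k+1}_1$). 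So your $H\zeta$ need not lie in $C^{\ell_1,\ldots,\ell_q}_{p+q+1}(\varphi)$.

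The paper's fix exploits non-branching on the \emph{other} side of $\varphi_k$, where two distinct geodesics are guaranteed: since the $k$-standard expression carries $f_k \neq \overline{f}_k$ in $\Geod(\varphi_{k-1},\varphi_k)$, the relation $\varphi_{k-1} < \varphi_k < \overline{x}^{f_{k+1}}$ would force $f_k = \overline{f}_k$ by Assumption \ref{assumption:non_branching}, hence $\varphi_{k-1} \not< \varphi_k \not< \overline{x}^{f_{k+1}}$. By continuity one then finds $\overline{x}^{f_k}$ on $f_k$ with $\varphi_{k-1} < \overline{x}^{f_k} < x^{f_k}$ and $\overline{x}^{f_k} \not< \varphi_k \not< \overline{x}^{f_{k+1}}$, and Lemma \ref{lem:replacement_of_reference_points} supplies a homotopy replacing $\zeta$ by the element $\overline{\zeta}$ with these new reference points, to which the Lemma \ref{lem:induction_initial_step}-type construction then applies. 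In other words, the prefix is emphatically \emph{not} passive — it must be modified before the insertion at $\varphi_k$ is legitimate — and Lemma \ref{lem:replacement_of_reference_points}, which your proposal never invokes, exists precisely for this purpose. The same repair is needed in your part (a), where you assert the computation is "identical in shape" without addressing the compatibility of the inserted point with $x^{f_k}$ at $\varphi_k$.
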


\begin{proof}
The proof will be a generalization of Lemma \ref{lem:induction_initial_step}, in which we need Lemma \ref{lem:replacement_of_reference_points} to resolve an issue present in the case of $k \ge 2$. We first prove (b). In this case, focusing on the number of points which follow $\varphi_k$, we can express $\zeta$ as $\zeta = \zeta_0 + \zeta_1 + \cdots$, where $\zeta_0$ and $\zeta_j$ for $j \ge 1$ have the expressions
\begin{align*}
\zeta_0 
&= \sum_{f_1, \ldots, f_k}
\sum_{\alpha \in \A_0}
N^{f_1, \ldots, f_k}_\alpha
\langle \cdots, 
\varphi_{k-1}, x^{f_k} - x^{\overline{f}_k}, \varphi_k,
\varphi_{k+1},
x^{\alpha, k+2}_1, 
\cdots
\rangle, \\
\zeta_j 
&= \sum_{f_1, \ldots, f_k} 
\sum_{f_{k+1}}
\sum_{\beta \in \B_j}
N^{f_1, \ldots, f_{k+1}}_{\beta_j}
\langle \cdots, 
\varphi_{k-1}, x^{f_k} - x^{\overline{f}_k}, \varphi_k,
x^{f_{k+1}, \beta, k+1}_1, \cdots, \\
&\quad\quad\quad\quad\quad
\quad\quad\quad\quad\quad
\quad\quad\quad\quad\quad
\quad\quad\quad\quad\quad
\cdots, x^{f_{k+1}, \beta, k+1}_j,
\varphi_{k+1},
\cdots
\rangle.
\end{align*}
In the above, each $f_i$ runs over a finite set in $\F^i_j \subset \Geod(\varphi_{i-1}, \varphi_i)$, $\A_0$ and $\B_j$ are finite sets, and the points $x^{f_{k+1}, \beta, k+1}_i$ lie on $f_{k+1}$. As in the proof of Lemma \ref{lem:induction_initial_step}, we can choose $\overline{x}^{f_{k+1}} \in f_{k+1}((0, \ell_{k+1}))$ for $f_{k+1} \in \F^{k+1} = \{ \overline{f}_{k+1} \} \cup \bigcup_j \F^{k+1}_j$ such that $\varphi_k < \overline{x}^{f_{k+1}} < x^{f_{k+1}, \beta, k+1}_1$ for all $\beta \in \bigcup_{j \ge 1}\B_j$ and $\overline{x}^{f_{k+1}} \not< \varphi_{k+1} \not< x^{\alpha, k+2}_1$ for all $\alpha \in \A_0$. Notice here that the relations $x^{f_k} \not< \varphi_k \not< \overline{x}^{f_{k+1}}$ may not be satisfied by $\overline{x}^{f_{k+1}}$, which is the issue present in the case that $j \ge 2$. However, we can show $\varphi_{k-1} \not< \varphi_k \not< \overline{x}^{f_{k+1}}$ under Assumption \ref{assumption:non_branching}. Hence we can find $\overline{x}^{f_k}$ on $f_k$ so as to be $\varphi_{k-1} < \overline{x}^{f_k} < x^{f_k}$ and $\overline{x}^{f_k} \not< \varphi_k \not< \overline{x}^{f_{k+1}}$ for all relevant $f_k$ and $f_{k+1}$. By Lemma \ref{lem:replacement_of_reference_points}, we have $H'\zeta \in C^{\ell_1, \ldots, \ell_q}_{p+q+1}(\varphi)$ and $H'd^1\zeta \in C^{\ell_1, \ldots, \ell_q}_{p+q}(\varphi)$ such that $H'd^1\zeta + H'd^1\zeta = \overline{\zeta} - \zeta$, where $\overline{\zeta}$ is given by replacing $x^{f_i}$ and $x^{\overline{f}_i}$ in $\zeta$ by $\overline{x}^{f_i}$ and $\overline{x}^{\overline{f}_i}$ respectively. For this $\overline{\zeta}$, we can directly generalize Lemma \ref{lem:induction_initial_step} by using $\overline{x}^{f_{k+1}}$, and get $H''\overline{\zeta} \in C^{\ell_1, \ldots, \ell_q}_{p+q+1}(\varphi)$ and $H''d^1\overline{\zeta} \in C^{\ell_1, \ldots, \ell_q}_{p+q}(\varphi)$ such that $d^1H''\overline{\zeta} + H''d^1\overline{\zeta} = \pi\overline{\zeta} - \overline{\zeta}$, in which $\pi\overline{\zeta}$ is $(k+1)$-standard and $\pi\overline{\zeta} = 0$ when $p = k$. In summary, (b) is satisfied by 
\begin{align*}
H\zeta &= H'\zeta + H''\overline{\zeta} 
\in C^{\ell_1, \ldots, \ell_q}_{p+q+1}(\varphi), &
Hd^1\zeta &= H'd^1\zeta + H''d^1\overline{\zeta}
\in C^{\ell_1, \ldots, \ell_q}_{p+q}(\varphi).
\end{align*}
Now, (a) is shown in the same way: Let $\zeta$ be as in Definition \ref{dfn:k_standard}. Under the assumption $d(\varphi_k, \varphi_{k+1}) > \ell_{k+1}$, we focus on the points which follow $\varphi_k$, and find some points $\overline{x}^\lambda$ so that we can apply the constructions in the proof of Lemma \ref{lem:replacable_by_chains_starting_with_non_4_cut} and Lemma \ref{lem:local_homotopy_for_E2_p1}. Though $x^{f_k} \not< \varphi_k \not< \overline{x}^\lambda$ may not be satisfied, Assumption \ref{assumption:non_branching} allows us to find points $\overline{x}^{f_i}$ such that we can apply the constructions by using $\overline{x}^\lambda$ to the element $\overline{\zeta}$ given by replacing $x^{f_i}$ in $\zeta$ by $\overline{x}^{f_i}$. The elements $\zeta$ and $\overline{\zeta}$ are related as in Lemma \ref{lem:replacement_of_reference_points}, and we finally get $H\zeta$ and $Hd^1\zeta$ such that $d^1H\zeta + Hd^1\zeta = - \zeta$. 
\end{proof}

\begin{prop} \label{prop:determine_all_direct_summand}
Let $(X, d)$ be a geodesic space satisfying Assumption \ref{assumption:non_branching}, $q \ge 2$ an integer, $\ell_1,\ldots, \ell_q$ positive real numbers, and $\varphi = \langle \varphi_0, \cdots, \varphi_q \rangle \in \widehat{P}_q(X)$.
\begin{itemize}
\item[(a)]
$H^{\ell_1, \ldots, \ell_q}_{p+q}(\varphi) = 0$ if $\ell_i > d(\varphi_{i-1}, \varphi_i)$ for some $i$ or $p \neq q$.

\item[(b)]
In the case that $\ell_i = d(\varphi_{i-1}, \varphi_i)$ for $i = 1, \ldots, q$ and $p = q$, we choose $\overline{f}_i \in \Geod(\varphi_{i-1}, \varphi_i)$ for each $i$. Then we have
$$
H^{\ell_1, \ldots, \ell_q}_{2q}(\varphi)
= \bigoplus_{(f_i) \neq (\overline{f}_i)} 
\Z \Gamma(f_1, \overline{f}_1; \cdots; f_q; \overline{f}_q),
$$
where $(f_i) = (f_1, \ldots, f_q)$ runs over $\Geod(\varphi_0, \varphi_1) \times \cdots \times \Geod(\varphi_{q-1}, \varphi_q)$ such that $f_i \neq \overline{f}_i$ for $i = 1, \ldots, q$. 
\end{itemize}
\end{prop}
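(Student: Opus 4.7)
My plan is to iterate Lemmas \ref{lem:induction_initial_step} and \ref{lem:induction_main_step} on any cycle $\zeta \in C^{\ell_1, \ldots, \ell_q}_{p+q}(\varphi)$, pushing it through $k$-standard representatives for $k = 1, 2, \ldots, q$. First, I apply Lemma \ref{lem:induction_initial_step}: if $\ell_1 > d(\varphi_0, \varphi_1)$, part (a) gives $\zeta = -d^1 H\zeta$ (using $d^1\zeta = 0$ and $Hd^1\zeta = 0$), so $\zeta$ is a boundary; otherwise part (b) produces a $1$-standard cycle $\zeta_1 := \pi\zeta$ homologous to $\zeta$. Then I iterate Lemma \ref{lem:induction_main_step} for $k = 1, 2, \ldots, q-1$: at each step, either part (a) reveals $\zeta_k$ as a boundary (when $\ell_{k+1} > d(\varphi_k, \varphi_{k+1})$), or part (b) yields $\zeta_{k+1}$ homologous to $\zeta_k$ and $(k+1)$-standard, subject to the escape clause $\zeta_{k+1} = 0$ whenever $p = k$.

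Part (a) then splits into three subcases: (i) some $\ell_i > d(\varphi_{i-1}, \varphi_i)$, where the first such index $i_0$ triggers the (a)-case and $\zeta$ becomes a boundary; (ii) all equalities hold but $p < q$, where the escape clause at the $k = p$ step (or the initial-step escape if $p = 0$) forces $\zeta_{p+1} = 0$; (iii) all equalities hold but $p > q$, where the iteration completes all steps and produces a $q$-standard cycle $\zeta_q \in C^{\ell_1, \ldots, \ell_q}_{p+q}(\varphi)$. By Definition \ref{dfn:k_standard} with $k = q$, the expression requires the empty sum $m^\alpha_{q+1} + \cdots + m^\alpha_q = p - q$ to vanish, so only $p = q$ admits nonzero $q$-standard elements; hence $\zeta_q = 0$ for $p > q$.

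For part (b), where $p = q$ and all $\ell_i = d(\varphi_{i-1}, \varphi_i)$, the iteration terminates with $\zeta$ homologous to a $q$-standard cycle
\[
\zeta_q = \sum_{(f_1, \ldots, f_q)} N^{f_1, \ldots, f_q} \,\gamma(x^{f_1}, x^{\overline{f}_1}; \cdots; x^{f_q}, x^{\overline{f}_q}),
\]
summed over tuples with $f_i \neq \overline{f}_i$ for every $i$. Its homology class is $\sum N^{f_1, \ldots, f_q}\,\Gamma(f_1, \overline{f}_1; \cdots; f_q, \overline{f}_q)$, giving surjectivity of the claimed direct sum onto $H^{\ell_1, \ldots, \ell_q}_{2q}(\varphi)$.

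The main obstacle is the linear independence of the classes $\Gamma(f_1, \overline{f}_1; \cdots; f_q, \overline{f}_q)$ for distinct tuples. I would establish it by constructing evaluation cochains: for each tuple $(g_1, \ldots, g_q)$ with $g_i \neq \overline{f}_i$ for every $i$, define $\nu_{g_1, \ldots, g_q} : C^{\ell_1, \ldots, \ell_q}_{2q}(\varphi) \to \Z$ on basis chains of the ``one-smooth-point-per-segment'' shape $\langle \varphi_0, y_1, \varphi_1, \ldots, y_q, \varphi_q \rangle$ to equal $1$ if the unique geodesic (provided by Assumption \ref{assumption:non_branching}) through $y_i$ joining $\varphi_{i-1}$ to $\varphi_i$ equals $g_i$ for every $i$, and $0$ otherwise (and $0$ on basis chains with any other smoothness distribution). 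A bookkeeping argument then shows $\nu_{g_1, \ldots, g_q} \circ d^1$ vanishes on $C^{\ell_1, \ldots, \ell_q}_{2q+1}(\varphi)$: only boundaries of chains with one ``extra'' smooth point concentrated in a single segment can produce the target one-per-segment shape, and the two contributions (from removing either of the two adjacent smooth points) appear with opposite signs. The pairing $\nu_{g_1, \ldots, g_q}\bigl(\gamma(x^{f_1}, x^{\overline{f}_1}; \cdots; x^{f_q}, x^{\overline{f}_q})\bigr) = \delta_{(g_i), (f_i)}$ then follows since under $g_i \neq \overline{f}_i$ and $f_i \neq \overline{f}_i$, the only summand whose $i$-th smooth point lies on $g_i$ for every $i$ is the all-$x^{f_i}$ term, and this requires $g_i = f_i$ throughout.
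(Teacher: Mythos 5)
Your proposal is correct and follows essentially the same route as the paper: generation of $H^{\ell_1,\ldots,\ell_q}_{p+q}(\varphi)$ by iterating Lemmas \ref{lem:induction_initial_step} and \ref{lem:induction_main_step} through $k$-standard representatives, and linear independence by an evaluation functional that reads off, from the one-smooth-point-per-segment part of a cycle, the tuple of geodesics on which the smooth points lie, and which annihilates boundaries because the two removals of adjacent smooth points on a common geodesic cancel. The only cosmetic difference is that you use a family of $\Z$-valued cochains $\nu_{g_1,\ldots,g_q}$ where the paper bundles them into a single homomorphism $\varrho$ to the free abelian group on the symbols $\tilde{\Gamma}(f_1;\cdots;f_q)$ with $\varrho\circ\varpi=\mathrm{id}$.
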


\begin{proof}
We apply Lemma \ref{lem:induction_initial_step} and then Lemma \ref{lem:induction_main_step} inductively to a given cycle $\zeta \in C^{\ell_1, \ldots, \ell_q}_{p+q}(\varphi)$. If $\ell_i > d(\varphi_{i-1}, \varphi_i)$ for some $i$ or $p \neq q$, then $\zeta$ turns out to be null-homologous at some step, so that (a) follows. If $\ell_i = d(\varphi_{i-1}, \varphi_i)$ for all $i$ and $p = q$, then we finally see that $\zeta$ is homologous to a linear combination of the cycles representing $\Gamma(f_1, \overline{f}_1; \cdots; f_q, \overline{f}_q)$. Hence these homology classes form a generating set of $H^{\ell_1, \ldots, \ell_q}_{2q}(\varphi)$. 

To complete the proof of (b), we shall show that this generating set is a basis. To make the argument clear, let $\Z \tilde{\Gamma}$ be the free abelian group generated by the formal symbols $\tilde{\Gamma}(f_1; \cdots; f_q)$ associated to $f_i \in \Geod(\varphi_{i-1}, \varphi_i)$ such that $f_i \neq \overline{f}_i$. There is an obvious homomorphism
\begin{align*}
\varpi &: \Z\tilde{\Gamma} \to H^{\ell_1, \ldots, \ell_q}_{2q}(\varphi), &
\varpi(\tilde{\Gamma}(f_1; \cdots; f_q))
&= \Gamma(f_1, \overline{f}_1; \cdots; f_q; \overline{f}_q).
\end{align*}
This is surjective, since we already know the generating set of $H^{\ell_1, \ldots, \ell_q}_{2q}(\varphi)$. To see that $\varpi$ is injective, we construct a homomorphism $\varrho : H^{\ell_1, \ldots, \ell_q}_{2q}(\varphi) \to \Z\tilde{\Gamma}$ as follows. Let $Z^{\ell_1, \ldots, \ell_q}_{2q}(\varphi) \subset C^{\ell_1, \ldots, \ell_q}_{2q}(\varphi)$ be the group of cycles. For a cycle $\zeta \in Z^{\ell_1, \ldots, \ell_q}_{2q}(\varphi)$ given, we can express it as $\zeta = \zeta' + \zeta''$, where the chains in $\zeta'$ belong to $C^{\ell_1, \ldots, \ell_q}_{2, \ldots, 2}(\varphi)$, and those in $\zeta''$ do not. We can then express $\zeta'$ in the following way
$$
\zeta' = 
\sum_{f_1, \ldots, f_q} 
\sum_{i_{f_1}, \ldots, i_{f_q}}
N^{f_1, \ldots, f_q}_{i_{f_1}, \ldots, i_{f_q}}
\langle 
\varphi_0, x^{f_1}_{i_{f_1}}, \varphi_1, \cdots,
\varphi_{q-1}, x^{f_q}_{i_{f_q}}, \varphi_q
\rangle,
$$
where $f_j$ runs over (a finite set in) $\Geod(\varphi_{j-1}, \varphi_j)$, the index $i_{f_j}$ over a finite set for each $f_j$, and the point $x^{f_j}_{i_{f_j}}$ lies on $f_j$. We define $\tilde{\varrho} : Z^{\ell_1, \ldots, \ell_q}_{2q}(\varphi) \to \Z\tilde{\Gamma}$ by 
$$
\tilde{\varrho}(\zeta) = \tilde{\varrho}(\zeta')
= 
\sum_{f_1 \neq \overline{f}_1, \ldots, f_q \neq \overline{f}_q}
\sum_{i_{f_1}, \ldots, i_{f_q}}
N^{f_1, \ldots, f_q}_{i_{f_1}, \ldots, i_{f_q}}
\tilde{\Gamma}(f_1; \cdots; f_q).
$$
Let us consider here a chain of the following form
$$
\beta
=
\langle \cdots, \varphi_{j-2}, x^{f_{j-1}}, \varphi_{j-1}, 
\overline{x}^{f_j}, x^{f_j}, \varphi_j,
x^{f_{j+1}}, \varphi_{j+1}, \cdots 
\rangle
\in C^{\ell_1, \ldots, \ell_q}_{2q+1}(\varphi),
$$
where $x^{f_j}$ and $\overline{x}^{f_j}$ lie on $f_j \in \Geod(\varphi_{j-1}, \varphi_j)$. The part of $d^1\zeta$ that belongs to $C^{\ell_1, \ldots, \ell_q}_{2, \ldots, 2}(\varphi)$ is
\begin{multline*}
-
\langle \cdots, \varphi_{j-2}, x^{f_{j-1}}, \varphi_{j-1}, 
x^{f_j}, \varphi_j,
x^{f_{j+1}}, \varphi_{j+1}, \cdots 
\rangle \\
+
\langle \cdots, \varphi_{j-2}, x^{f_{j-1}}, \varphi_{j-1}, 
\overline{x}^{f_j}, \varphi_j,
x^{f_{j+1}}, \varphi_{j+1}, \cdots 
\rangle.
\end{multline*}
Because $x^{f_j}$ and $\overline{x}^{f_j}$ line on the same geodesic $f_j$, it follows that $\tilde{\varrho}(d^1\beta) = 0$. The boundary of a chain in $C^{\ell_1, \ldots, \ell_q}_{2q+1}(\varphi)$ which is not the form of $\beta$ above contributes trivially to $\tilde{\varrho}$. Hence $\tilde{\varrho}$ is trivial on the subgroup $d^1(C^{\ell_1, \ldots, \ell_q}_{2q+1}(\varphi))$ of boundaries. As a result, $\tilde{\varrho}$ descends to induce a homomorphism $\varrho$ in question. It is easy to see that $\varrho \circ \varpi$ is the identity homomorphism on $\Z\tilde{\Gamma}$. Therefore $\varpi$ is injective.
\end{proof}

Now, Theorem \ref{thm:main} directly follows from the proposition above.

\begin{thm} \label{thm:complete_description_under_non_branching}
Let $(X, d)$ be a geodesic space satisfying Assumption \ref{assumption:non_branching}, and $H^\ell_n(X)$ the $n$th magnitude homology of $X$ with length $\ell > 0$. 
\begin{enumerate}
\item[(a)]
If $n$ is odd, then $H^\ell_n(X) = 0$ for any $\ell$.

\item[(b)]
If $n = 2q$ is even, then
$$
H^\ell_n(X)
\cong
\bigoplus_{\ell_1, \ldots, \ell_q}
\bigoplus_{\varphi_0, \ldots, \varphi_q}
\bigoplus_{f_1, \ldots, f_q}
\Z (f_1, \ldots, f_q).
$$
In the above, the direct sum is taken over:
\begin{itemize}
\item
positive real numbers $\ell_1, \ldots, \ell_q$ such that $\ell = \ell_1 + \cdots + \ell_q$, 

\item
points $\varphi_0, \ldots, \varphi_q \in X$ such that $d(\varphi_{i-1}, \varphi_i) = \ell_i$ for $ = 1, \ldots, q$,

\item
geodesics $f_i \in \Geod(\varphi_{i-1}, \varphi_i)$ such that $f_i \neq \overline{f}_i$ for $i = 1, \ldots, q$, 
\end{itemize}
where $\overline{f}_i \in \Geod(\varphi_{i-1}, \varphi_i)$ are geodesics chosen arbitrarily as references. We mean by $\Z(f_1, \ldots, f_q)$ the free abelian group of rank $1$ generated by the formal symbol $(f_1, \ldots, f_q)$.
\end{enumerate}
\end{thm}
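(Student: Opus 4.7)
The plan is to assemble the preceding results through the smoothness spectral sequence. Recall the refined direct-sum decomposition of the $E^1$-page,
$$
E^1_{p,q} = \bigoplus_{\varphi \in \widehat{P}_q(X)} \bigoplus_{\ell_1 + \cdots + \ell_q = \ell} C^{\ell_1, \ldots, \ell_q}_{p+q}(\varphi),
$$
which is preserved by $d^1$, so that
$$
E^2_{p,q} = \bigoplus_{\varphi} \bigoplus_{(\ell_i)} H^{\ell_1, \ldots, \ell_q}_{p+q}(\varphi).
$$
Identifying the $E^2$-page thus reduces to computing each local summand $H^{\ell_1,\ldots,\ell_q}_{p+q}(\varphi)$.

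These local summands are handled by cases already in hand. For $q = 0$ and $\ell > 0$ we have $E^1_{p,0} = 0$. For $q = 1$ with $\ell > d(\varphi_0,\varphi_1)$, Proposition \ref{prop:E2_p1} gives vanishing. For $q = 1$ with $\ell = d(\varphi_0,\varphi_1)$, Proposition \ref{prop:length_is_the_lower_bound} yields $H^\ell_{p+1}(\varphi_0,\varphi_1) \cong \overline{H}_{p-1}(A(\varphi_0,\varphi_1))$; under Assumption \ref{assumption:non_branching}, the path-component splitting used in the proof of Corollary \ref{cor:third_homology_on_geodesic_space_general_version} realises each component $A(\varphi_0,\varphi_1;[f])$ as a full simplex on a totally ordered vertex set, so Lemma \ref{lem:vanishing_of_homology_in_totally_ordered_case} forces the reduced homology to vanish outside degree $0$, where it equals $\bigoplus_{f \neq \overline{f}_1} \Z$ indexed by the geodesic classes. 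For $q \geq 2$, Proposition \ref{prop:determine_all_direct_summand} gives vanishing unless $p = q$ and $\ell_i = d(\varphi_{i-1},\varphi_i)$ for every $i$, in which case the summand is free on the classes $\Gamma(f_1,\overline{f}_1;\cdots;f_q,\overline{f}_q)$ indexed by tuples with $f_i \neq \overline{f}_i$. Putting everything together, $E^2_{p,q}$ vanishes off the diagonal $p = q \geq 1$, and on the diagonal it is precisely the direct sum appearing in part (b).

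Degeneracy at $E^2$ is now immediate: a higher differential $d^r : E^r_{p,q} \to E^r_{p-r,q+r-1}$ with $r \geq 2$ connecting two diagonal positions would require $p = q$ and $p - r = q + r - 1$, forcing $2r = 1$, which is impossible. Hence at least one endpoint is already zero at $E^2$, and $E^\infty = E^2$. Reading off $H^\ell_n(X) = \bigoplus_{p+q = n} E^\infty_{p,q}$, one obtains vanishing for odd $n$, which is part (a), and for $n = 2q$ one obtains $E^2_{q,q}$; noting that the condition $\ell_i > 0$ forces $\varphi \in \widehat{P}_q(X)$ to be proper with $\varphi_{i-1} \neq \varphi_i$ and re-indexing as $(\varphi_0, \ldots, \varphi_q)$ gives exactly the formula in part (b).

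The main obstacle has already been absorbed into Proposition \ref{prop:determine_all_direct_summand}; beyond the diagonal-degeneracy argument above, what remains is purely combinatorial bookkeeping, matching the implicit constraints on the indices $\varphi$ and $(\ell_i)$ with the explicit indexing sets in the statement of the theorem.
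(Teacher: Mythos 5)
Your argument is correct and is essentially the paper's own proof: decompose $E^2_{p,q}$ into the summands $H^{\ell_1,\ldots,\ell_q}_{p+q}(\varphi)$, invoke Propositions \ref{prop:E2_p1} and \ref{prop:determine_all_direct_summand} to concentrate everything on the diagonal $p=q$, note $E^2$-degeneracy, and read off the answer. You in fact handle the $q=0$ and $q=1$ columns (via $E^1_{p,0}=0$, Proposition \ref{prop:length_is_the_lower_bound}, and the component-wise contractibility of $A(\varphi_0,\varphi_1)$ under Assumption \ref{assumption:non_branching}) more explicitly than the paper, which leaves these cases implicit since Proposition \ref{prop:determine_all_direct_summand} is only stated for $q\ge 2$.
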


\begin{proof}
Recall the direct sum decomposition of the spectral sequence
$$
E^2_{p, q}
= 
\bigoplus_{\varphi \in \widehat{P}_q(X)}
\bigoplus_{\stackrel{\ell_1, \ldots, \ell_q > 0}
{\ell_1 + \cdots + \ell_q = \ell}}
H^{\ell_1, \ldots, \ell_q}_{p+q}(\varphi).
$$
By Proposition \ref{prop:E2_p1} and Proposition \ref{prop:determine_all_direct_summand}, we have $E^2_{p, q} = 0$ if $p \neq q$. This proves (a), and also the $E^2$-degeneracy of the spectral sequence. Using Proposition \ref{prop:determine_all_direct_summand} again, we get
$$
H^\ell_{2q}(X)
= E^2_{q, q} 
= \bigoplus_{\varphi \in P_q(X)}
\bigoplus_{\stackrel{\ell_1, \ldots, \ell_q > 0}
{\ell_1 + \cdots + \ell_q = \ell}}
H^{\ell_1, \ldots, \ell_q}_{2q}(\varphi).
$$
Describing $H^{\ell_1, \ldots, \ell_q}_{2q}(\varphi)$ in terms of the basis, we complete the proof of (b).
\end{proof}

\begin{cor} \label{cor:complete_description_under_non_branching:torsion}
If $(X, d)$ is a geodesic space satisfying Assumption \ref{assumption:non_branching}, then its magnitude homology $H^\ell_n(X)$ is torsion free for any $n$ and $\ell$.
\end{cor}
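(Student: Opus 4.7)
The plan is to deduce this as an immediate consequence of Theorem \ref{thm:complete_description_under_non_branching}, which has just been established. The key observation is that that theorem gives a presentation of $H^\ell_n(X)$, for each $\ell > 0$, as either the zero group (when $n$ is odd) or as an explicit direct sum of rank-one free abelian groups $\Z(f_1, \ldots, f_q)$ (when $n = 2q$ is even). Since any direct sum of free abelian groups is free abelian, and free abelian groups have no torsion, the conclusion follows.

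Concretely, I would split into three cases. First, if $n$ is odd and $\ell > 0$, then by part (a) of Theorem \ref{thm:complete_description_under_non_branching} we have $H^\ell_n(X) = 0$, which is trivially torsion free. Second, if $n = 2q$ is even and $\ell > 0$, then by part (b) of the theorem, $H^\ell_n(X)$ is isomorphic to a direct sum of copies of $\Z$ indexed by the triples $(\ell_1,\ldots,\ell_q;\,\varphi_0,\ldots,\varphi_q;\,f_1,\ldots,f_q)$ described there; any such direct sum is a free abelian group and hence torsion free. Third, the case $\ell = 0$ needs a brief separate remark: from the general formulas for $H^0_*(X)$ recalled in the introduction, $H^0_0(X) = \bigoplus_{x \in X}\Z\langle x\rangle$ is free abelian and $H^0_n(X) = 0$ for $n > 0$, so torsion freeness holds here too.

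There is no real obstacle: the work has all been done in Theorem \ref{thm:complete_description_under_non_branching}. The only thing worth verifying carefully is that the indexing sets in part (b) of that theorem really are sets (not proper classes) so that the direct sum makes sense as an abelian group, but this is automatic from the set-theoretic data $X$, $\R$, and $\Geod(\varphi_{i-1},\varphi_i) \subset X^{[0,\ell_i]}$. Hence the proof is a one-line invocation of the preceding theorem together with the elementary fact that subgroups and direct sums of free abelian groups are torsion free.
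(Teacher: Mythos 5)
Your proposal is correct and is exactly the argument the paper intends: the corollary is an immediate consequence of Theorem \ref{thm:complete_description_under_non_branching}, since the zero group and any direct sum of copies of $\Z$ are torsion free. Your extra care with the $\ell = 0$ case (where $C^0_n(X)=0$ for $n\ge 1$ because proper chains have positive length, and $H^0_0(X)$ is free abelian) is a sensible completeness check that the paper leaves implicit.
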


\begin{cor} \label{cor:vanishing_on_uniquely_geodesic_space}
Let $(X, d)$ be a uniquely geodesic space, and $H^\ell_n(X)$ its magnitude homology. If $n \neq 0$ or $\ell \neq 0$, then $H^\ell_n(X) = 0$.
\end{cor}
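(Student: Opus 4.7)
The plan is to derive this as a direct consequence of Theorem \ref{thm:complete_description_under_non_branching}, after handling the trivial boundary cases separately. First I would observe that a uniquely geodesic space automatically satisfies Assumption \ref{assumption:non_branching}: if $f, g \in \Geod(a, b)$ share any point $f(t) = g(t)$ with $t \in (0, d(a, b))$, then in particular $f$ and $g$ are both geodesics from $a$ to $b$, and the uniqueness of such a geodesic forces $f = g$. Hence the hypothesis of Theorem \ref{thm:complete_description_under_non_branching} is in force.

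Next, I would dispose of the cases not covered by that theorem, namely $\ell = 0$. For $\ell = 0$ and $n \ge 1$, any proper $n$-chain $\langle x_0, \ldots, x_n \rangle$ has $d(x_{i-1}, x_i) > 0$ for each $i$ by properness, so its length is strictly positive. Therefore $C^0_n(X) = 0$ for $n \ge 1$, giving $H^0_n(X) = 0$. The case $n = 0$, $\ell \neq 0$ follows from the general description of $H^\ell_0(X)$ recalled in the introduction.

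It remains to treat $\ell > 0$ and $n \ge 1$. Apply Theorem \ref{thm:complete_description_under_non_branching}. If $n$ is odd, part (a) gives $H^\ell_n(X) = 0$ immediately. If $n = 2q$ is even, part (b) expresses $H^\ell_n(X)$ as a direct sum indexed in part over $q$-tuples of geodesics $f_i \in \Geod(\varphi_{i-1}, \varphi_i)$ subject to $f_i \neq \overline{f}_i$ for each $i$. By unique geodesicity, $\Geod(\varphi_{i-1}, \varphi_i)$ is a singleton $\{ \overline{f}_i \}$ for every choice of $\varphi_{i-1}, \varphi_i$, so the condition $f_i \neq \overline{f}_i$ is never met. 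The indexing set is therefore empty and the direct sum vanishes.

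There is essentially no hard step here: all the serious work has already been done in Theorem \ref{thm:complete_description_under_non_branching}, and the corollary amounts to noticing that uniqueness of geodesics both implies the non-branching assumption and collapses the non-trivial index set in part (b) to the empty set. The only mild point to be careful about is the boundary case $\ell = 0$, which must be handled outside the theorem since the theorem is stated for $\ell > 0$.
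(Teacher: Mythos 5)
Your proposal is correct and matches the paper's intended argument: the corollary is stated as an immediate consequence of Theorem \ref{thm:complete_description_under_non_branching}, with unique geodesicity giving Assumption \ref{assumption:non_branching} and emptying the index set in part (b). Your careful treatment of the boundary cases $\ell = 0$ and $n = 0$ is a reasonable addition that the paper leaves implicit.
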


Since the geodesic spaces satisfying Assumption \ref{assumption:non_branching} cover connected and complete Riemannian manifolds, we can compute their magnitude homology groups by using Theorem \ref{thm:complete_description_under_non_branching} and knowledge of geodesics. As an example, let us consider the circle $S^1$ of radius $r$ equipped with geodesic distance. Any distinct points $a, b \in S^1$ such that $d(a, b) < \pi r$ have a unique geodesic joining $a$ to $b$. For any point $a \in S^1$, there is a unique point $\check{a} \in S^1$ such that $d(a, \check{a}) = \pi r$, and there are two distinct geodesics joining $a$ to $\check{a}$. With this knowledge and Theorem \ref{thm:complete_description_under_non_branching}, we easily get
$$
H^\ell_n(S^1)
\cong
\left\{
\begin{array}{ll}
\Z[S^1], & (\mbox{$n \ge 0$ even and $\ell = \pi r n/2$}) \\
0. & (\mbox{otherwise})
\end{array}
\right.
$$
More generally, let $S^d \subset \R^{d+1}$ be the sphere of dimension $d \ge 1$ with radius $r$, which we regard as a geodesic space by the standard Riemannian metric induced from $\R^{d+1}$. Any distinct points $a, b \in S^d$ such that $d(a, b) < \pi r$ admit a unique geodesic joining $a$ to $b$. For any $a \in S^d$, there is a unique point $\check{a} \in S^d$ such that $d(a, \check{a}) = \pi r$, and geodesics joining $a$ to $\check{a}$ are in one to one correspondence with points on $S^{d-1}$. Consequently, one has
$$
H^\ell_n(S^d)
\cong
\left\{
\begin{array}{ll}
\Z[S^d \times \underbrace{\dot{S}^{d-1} \times 
\cdots \times \dot{S}^{d-1}}_{n/2}],&
 (\mbox{$n \ge 0$ even and $\ell = \pi r n/2$}) \\
0, & (\mbox{otherwise})
\end{array}
\right.
$$
where $\dot{S}^{d-1}$ is $S^{d-1}$ with one point removed.



\begin{thebibliography}{999}

\bibitem{A}Y.~Asao,
\textit{Magnitude homology of geodesic metric spaces with 
an upper curvature bound}.
Algebr. Geom. Topol. 21 (2021), no. 2, 647--664.
arXiv:1903.11794.



\bibitem{A-Iv}Y.~Asao and S.~O.~Ivanov,
\textit{Magnitude homology is a derived functor}.
arXiv:2402.14466.


\bibitem{A-Iz}Y.~Asao and K.~Izumihara,
\textit{Geometric approach to graph magnitude homology}.
arXiv:2003.08058.



\bibitem{B-H}M.~R.~Bridson and A.~Haefliger,
\textit{Metric spaces of non-positive curvature}. 
Grundlehren der Mathematischen Wissenschaften, 319. 
Springer-Verlag, Berlin, 1999.



\bibitem{G}K.~Gomi,
\textit{Smoothness filtration of the magnitude complex}.
Forum Math. 32 (2020), no. 3, 625--639.
arXiv:1809.06593.



\bibitem{Gu}Y.~Gu,
\textit{Graph magnitude homology via algebraic Morse theory}.
arXiv:1809.07240.



\bibitem{H}R.~Hepworth,
\textit{Magnitude cohomology}.
Math. Z. 301 (2022), no. 4, 3617--3640.
arXiv:1807.06832.



\bibitem{H-W}R.~Hepworth and S.~Willerton,
\textit{Categorifying the magnitude of a graph}.
Homology Homotopy Appl. 19 (2017), no. 2, 31--60.
arXiv:1505.04125.



\bibitem{Jub}B.~ Jubin,
\textit{On the magnitude homology of metric spaces}.
arXiv:1803.05062.



\bibitem{K-Y}R.~Kaneta and M.~Yoshinaga,
\textit{Magnitude homology of metric spaces and order complexes}.
Bull. Lond. Math. Soc. 53 (2021), no. 3, 893--905.
arXiv:1803.04247.


\bibitem{L}T.~Leinster,
\textit{The magnitude of metric spaces}.
Doc. Math. 18 (2013), 857--905. \\
arXiv:1012.5857.



\bibitem{L-S}T.~Leinster and M.~ Shulman,
\textit{Magnitude homology of enriched categories and metric spaces}.
Algebr. Geom. Topol. 21 (2021), no. 5, 2175--2221.
arXiv:1711.00802.



\bibitem{Ott}N.~Otter,
\textit{Magnitude meets persistence. 
Homology theories for filtered simplicial sets}.
Homology Homotopy Appl. 24 (2022), no. 2, 365--387.
arXiv:1807.01540.



\bibitem{P}A.~Papadopoulos,
\textit{Metric spaces, convexity and non-positive curvature}. Second edition. 
IRMA Lectures in Mathematics and Theoretical Physics, 6. 
European Mathematical Society (EMS), Z\"{u}rich, 2014.


\bibitem{Spa}E.~H.~Spanier,
\textit{Algebraic topology}. 
Corrected reprint. Springer-Verlag, New York-Berlin, 1981.


\bibitem{T-Y}Y.~Tajima and M.~Yoshinaga,
\textit{Causal order complex and magnitude homotopy type of metric spaces}.
Int. Math. Res. Not. IMRN 2024, no. 4, 3176--3222.
arXiv:2302.09752.


\end{thebibliography}
\end{document}